\theoremstyle{plain}
\newtheorem{thm}{Theorem}[section]
\newtheorem*{thm*}{Theorem}
\newtheorem{lem}[thm]{Lemma}
\newtheorem{prop}[thm]{Proposition}
\theoremstyle{definition}
\newtheorem{exmp}{Example}[section]
\theoremstyle{remark}
\newtheorem{rem}{Remark}
\newcommand\inner[2]{\left\langle #1, #2 \right\rangle}
\newcommand{\tco}{\mathcal{S}^1}
\newcommand{\bo}{\mathcal{L}(L^2)}
\newcommand{\SC}{\mathcal{S}}
\newcommand{\compacts}{\mathcal{K}}
\newcommand{\C}{\mathbb{C}}
\newcommand{\R}{\mathbb{R}}
\newcommand{\Rd}{\mathbb{R}^d}
\newcommand{\Rdd}{\mathbb{R}^{2d}}
\newcommand{\N}{\mathbb{N}}
\newcommand{\Ld}{L^1(\Rd)}
\newcommand{\Ldd}{L^1(\Rdd)}
\newcommand{\HS}{\mathcal{S}^2}
\newcommand{\F}{\mathcal{F}}
\newcommand{\tr}{\mathrm{tr}}
\newcommand{\weyl}{a}
\author{Franz Luef and Eirik Skrettingland}
\title{A Wiener Tauberian theorem for operators and functions}
\address{Department of Mathematics\\ NTNU Norwegian University of Science and
Technology\\ NO–7491 Trondheim\\Norway}
\email{franz.luef@math.ntnu.no, eirik.skrettingland@ntnu.no}
\keywords{Tauberian theorem, localization operators, Toeplitz operators, Gabor spaces, Born-Jordan quantization}
\subjclass{40E05; 47G30; 47B35; 47B10}
\begin{document}

\begin{abstract}
    We prove variants of Wiener's Tauberian theorem in the framework of quantum harmonic analysis, i.e. for convolutions between an absolutely integrable function and a trace class operator, or of two trace class operators. Our results include Wiener's Tauberian theorem as a special case. Applications of our Tauberian theorems are related to localization operators, Toeplitz operators, isomorphism theorems between Bargmann-Fock spaces and quantization schemes with consequences for Shubin's pseudodifferential operator calculus and Born-Jordan quantization. Based on the links between localization operators and Tauberian theorems we note that the analogue of Pitt's Tauberian theorem in our setting implies compactness results for Toeplitz operators in terms of the Berezin transform. In addition, we extend the results on Toeplitz operators to other reproducing kernel Hilbert spaces induced by the short-time Fourier transform, known as Gabor spaces. Finally, we establish the equivalence of Wiener's Tauberian theorem and the condition in the characterization of compactness of localization operators due to Fern\'andez and Galbis.    
\end{abstract}
\maketitle
\section{Introduction}
In operator theory one views the space of trace class operators $\tco$ as the noncommutative analogue of the space of absolutely integrable functions $\Ld$ by viewing the trace of an operator as the substitute of the Lebesgue integral of a function. Over the years this point of view has led to a number of results in operator theory where one has extended concepts for functions to operators in an attempt to formulate operator-theoretic analogues of statements about functions. Guided by this meta-statement, Werner has proposed an operator-theoretic variant of harmonic analysis in \cite{Werner:1984}, which originated from his work in quantum physics and is thus referred to as ``quantum harmonic analysis".     

%Operator-theoretic analogs of Wiener's Tauberian theorem are the main results of this paper and these have various applications to theory of localization operators, Toeplitz operators and quantization schemes.

%Before we formulate our main results we review some basic facts from time-frequency analysis.
In this paper we establish a version of Wiener's Tauberian theorem in the setting of quantum harmonic analysis. Wiener's Tauberian theorem is a cornerstone of harmonic analysis. In short, it analyses the asymptotic properties of a bounded function by testing it with convolution kernels. %We denote by $\mathcal{W}$ the set of all $k\in\Ld$ With a non-vanishing Fourier transform $\widehat{k}$.
\begin{thm*}[Wiener's Tauberian Theorem]
Suppose $f\in L^\infty(\mathbb{R}^d)$ and $h\in\Ld$ with a non-vanishing Fourier transform $\widehat{h}$. Then the following implication holds for $A\in \mathbb{C}$: if 
 \begin{equation*}
    \lim_{x\to\infty} (h\ast f)(x)=A\int_{\mathbb{R}^d} h(y)\ dy,
 \end{equation*}
 then for any $g\in\Ld$ we have
 \begin{equation*}
    \lim_{x\to\infty} (g\ast f)(x)=A\int_{\mathbb{R}^d} g(y)\ dy.
 \end{equation*}
\end{thm*}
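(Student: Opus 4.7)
The plan is to show that the set
\[
    \mathcal{I} := \Bigl\{ g \in \Ld : \lim_{x\to\infty} (g \ast f)(x) = A \int_{\Rd} g(y)\, dy \Bigr\}
\]
of ``good'' functions coincides with all of $\Ld$. The hypothesis gives $h \in \mathcal{I}$, while the conclusion is precisely the statement $\mathcal{I} = \Ld$. So the task is to enlarge $\mathcal{I}$ starting from $h$ using structural properties.

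First I would verify that $\mathcal{I}$ is a norm-closed linear subspace of $\Ld$. Linearity is immediate from linearity of convolution and of the integral. For closedness, if $g_n \to g$ in $\Ld$ with each $g_n \in \mathcal{I}$, then $\|g_n \ast f - g \ast f\|_\infty \le \|g_n - g\|_1 \|f\|_\infty$, so uniform convergence allows me to interchange $\lim_{x\to\infty}$ with $\lim_{n\to\infty}$ and conclude $g \in \mathcal{I}$. Next, $\mathcal{I}$ is translation-invariant: for any $y \in \Rd$, $(T_y g) \ast f = T_y (g \ast f)$, and the asymptotic behavior at infinity is unaffected by a fixed translation, while $\int T_y g = \int g$.

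The decisive step is an appeal to \emph{Wiener's approximation theorem}: if $\widehat{h}$ vanishes nowhere, then the closed translation-invariant subspace of $\Ld$ generated by $h$ is all of $\Ld$. Combining this with the two previous observations, $\mathcal{I}$ contains $h$, is closed, and is translation-invariant, so $\mathcal{I} \supseteq \overline{\mathrm{span}}\{T_y h : y \in \Rd\} = \Ld$. This yields the Tauberian conclusion for every $g \in \Ld$.

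The main obstacle is Wiener's approximation theorem itself. I would prove it via the Banach algebra structure: $\Ld$ is a commutative Banach algebra under convolution whose Gelfand (maximal ideal) space is $\Rd$, with characters realized as Fourier transform evaluation $g \mapsto \widehat{g}(\xi)$. Using an approximate identity one checks that closed translation-invariant subspaces of $\Ld$ coincide with closed ideals. Hence the closed translation-invariant subspace generated by $h$ equals $\Ld$ iff $h$ lies outside every maximal ideal, iff $\widehat{h}(\xi) \ne 0$ for every $\xi \in \Rd$. This spectral-theoretic identification of closed ideals with zero sets of the Fourier transform is the nontrivial input; everything else in the argument is soft.
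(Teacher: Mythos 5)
Your argument is correct and is essentially the route the paper intends: the paper states this classical theorem without proof, noting only that it rests on Wiener's approximation theorem, and your deduction --- the set of good $g$ is a norm-closed, translation-invariant subspace of $\Ld$ containing $h$, hence all of $\Ld$ by the approximation theorem --- is the standard argument, and it mirrors the way the paper proves its operator analogues (density of $\{r_n\star S\}$ in $\tco$ plus Young's inequality). The only soft spot is your closing Gelfand-theoretic sketch of the approximation theorem, which glosses the non-unital subtlety that every proper closed ideal of $L^1(\Rd)$ lies in a maximal modular ideal (this is precisely the hard content of Wiener's theorem, not a formal consequence of Gelfand theory), but since you, like the paper, may take the approximation theorem as given, this does not affect the correctness of the Tauberian deduction.
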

Moreover, Wiener noticed that the Tauberian condition holds \textit{only} for $h\in\Ld$  satisfying the condition $\widehat{h}(\omega)\ne 0$ for any $\omega\in\Rd$. The key step in the proof of this equivalence is based on the following approximation theorem. For $f\in\Ld$ we denote by $T_xf(t)=f(t-x)$ the translate of $f$ by $x\in\Rd$.
\begin{thm*}[Wiener's Approximation Theorem]
   For $f\in\Ld$ we have that $\overline{\text{span}}\{T_x f:\,x\in\Rd\}=\Ld$ if and only if $\widehat{f}(\omega)\ne 0$ for any $\omega\in\Rd$. 
\end{thm*}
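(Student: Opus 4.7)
My plan is to prove the two implications separately, with necessity being elementary and sufficiency resting on the Banach-algebra structure of $\Ld$ under convolution. For necessity, suppose $\widehat{f}(\omega_0)=0$ for some $\omega_0\in\Rd$. Every finite linear combination $g=\sum_j c_j T_{x_j}f$ satisfies
$$\widehat{g}(\omega)=\widehat{f}(\omega)\sum_j c_j e^{-2\pi i x_j\cdot\omega},$$
hence $\widehat{g}(\omega_0)=0$. Because $\F\colon\Ld\to C_0(\Rd)$ is bounded, this vanishing persists on the closure. Since a Gaussian has nowhere-vanishing Fourier transform, the closed span is strictly contained in $\Ld$.

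For the converse, set $I:=\overline{\mathrm{span}}\{T_x f : x\in\Rd\}$. The first step is to show that $I$ is a closed convolution ideal of $\Ld$. For any $g\in\Ld$ one writes
$$g\ast f=\int_{\Rd} g(y)\, T_y f\, dy$$
as a Bochner integral in $\Ld$; approximating by Riemann sums, which are finite linear combinations of translates of $f$, yields $g\ast f\in I$. Since $I$ is translation-invariant and convolution commutes with translation, it follows that $g\ast h\in I$ for every $h\in I$. The second step is to invoke the classical Wiener theorem on the Banach algebra $\Ld$: every proper closed ideal is annihilated by a character of the form $g\mapsto\widehat{g}(\omega)$ for some $\omega\in\Rd$. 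Were $I$ proper, such an $\omega$ would yield $\widehat{f}(\omega)=0$, contradicting the hypothesis, so $I=\Ld$.

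The main obstacle is the Banach-algebraic Wiener theorem invoked above, since the ideal computation is routine. Its proof proceeds via Gelfand theory to identify the maximal ideal space of $\Ld$ with $\Rd$, after which one reduces to showing that any $g\in\Ld$ with compactly supported $\widehat{g}$ belongs to the closed ideal generated by $f$ whenever $\widehat{f}$ is nonzero on $\mathrm{supp}(\widehat{g})$. The decisive tool is Wiener's $1/f$ lemma for absolutely convergent Fourier series, which supplies local inverses in a neighborhood of each frequency point; a partition-of-unity argument glues these local inverses together, and a density argument then extends to all of $\Ld$.
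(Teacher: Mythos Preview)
The paper does not contain a proof of this statement: Wiener's Approximation Theorem is quoted in the introduction as a classical result that motivates Werner's operator-theoretic generalization (Theorem~\ref{thm:wernerapproximation}), and the latter is itself only cited from \cite{Werner:1984,Kiukas:2012,Luef:2018c} rather than proved. So there is nothing in the paper to compare your argument against.

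As for the argument itself, your outline is correct and follows the standard route. The necessity direction is complete as written. For sufficiency, the reduction to ``$I$ is a proper closed ideal, hence has nonempty hull'' is valid, but you should be aware that this last step is \emph{exactly} the nontrivial content of Wiener's theorem; it is not a general Gelfand-theoretic fact that proper closed ideals in commutative Banach algebras have nonempty hull. You acknowledge this by pointing to Wiener's $1/f$ lemma plus a partition-of-unity argument as the ``decisive tool,'' and that is indeed the heart of any honest proof. One small correction: the local-inverse step does not actually require Wiener's lemma on absolutely convergent Fourier series (which lives on $\mathbb{T}^d$); what you need is the local version for $\Ld$, namely that if $\widehat{f}$ is nonvanishing on a compact set $K$, then there exists $g\in\Ld$ with $\widehat{g}=1/\widehat{f}$ on a neighbourhood of $K$. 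This is typically proved directly via smooth cutoffs and the inversion theorem, not by reduction to the torus. With that adjustment, your sketch is a faithful summary of the classical proof.
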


In quantum harmonic analysis one complements the convolution $f\ast g(x)=\int_{\Rd} f(t)g(x-t)\ dt$ of $f,g\in\Ld$ with two new convolution operations: the convolution $f\star S$ of $f\in\Ld$ and a trace class operator $S$, and the convolution $S\star T$ of two trace class operators $S$ and $T$. This is achieved by replacing, for $z\in \mathbb{R}^{2d}$, the translation $T_z f$ of a function by the \textit{translation} $\alpha_z(R)$ of a bounded operator $R$ given by
\begin{equation*}
  \alpha_z(R)=\pi(z)R\pi(z)^*\quad \text{ for } z\in \Rdd,
\end{equation*}
where  $\left(\pi(z)\psi\right)(t)=e^{2\pi i  \omega\cdot t}\psi(x-t)$ denotes the time-frequency shift  of $\psi\in L^2(\Rd)$ by $z=(x,\omega)\in\Rdd$. 

For $f\in L^1(\Rdd)$ and $S\in \tco$, where $\tco$ denotes the trace class operators, the convolution $f\star S\in \tco$ is then defined by the Bochner integral
\begin{equation*} 
  f\star S := \int_{\Rdd} f(z) \alpha_z(S) \ dz,
\end{equation*}
which is another trace class operator. The convolution $S\star T$ of two operators $S,T\in \tco$ is the function
\begin{equation*} 
  S\star T(z) = \tr(S\alpha_z (\check{T}))\quad \text{ for } z\in \Rdd, 
\end{equation*}
where $\check{T}=PTP$, with $P$ the parity operator $P\psi(t)=\psi(-t).$ 

In summary, the convolutions $f\star S$ and $S\star T$ arise as extensions of the convolution of functions where one replaces either one or both $L^1$-functions with trace class operators. The seminal paper \cite{Werner:1984} contains a number of operator-theoretic versions of basic results from harmonic analysis, e.g. the Riemann-Lebesgue lemma, the Hausdorff-Young theorem and Wiener's approximation theorem. The
 variant of Wiener's approximation theorem in \cite{Werner:1984} concerns translates of a trace class operator being dense in the space of trace class operators, and is established by defining an operator-theoretic Fourier transform, the Fourier-Wigner transform $ \F_W(S)\in L^\infty(\Rdd)$ of a trace class operator $S$.

%Quantum harmonic analysis is based on the interplay of functions on phase space $\Rdd$ and trace class operators.

The appropriate Fourier transform for functions in $\Ldd$ is the symplectic Fourier transform $\F_\sigma$ and the following classes of functions and operators are going to be crucial in our Tauberian theorems for quantum harmonic analysis:
\begin{align*}
  W(\Rdd)&:=\{ f\in L^1(\Rdd):\F_\sigma(f)(z)\neq 0 \text{ for any } z\in \Rdd \}, \\
  \mathcal{W}&:= \{S\in \tco : \F_W(S)(z)\neq 0 \text{ for any }z\in \Rdd \}. 
\end{align*}
Our first main result is a generalization of Wiener's Tauberian Theorem for functions on $\Rdd$. Here $\compacts$ denotes the space of compact operators on $L^2(\Rd)$ and $I_{L^2}$ is the identity operator.
\begin{restatable*}[Tauberian theorem for bounded functions]{thm}{tauberianfunction}  \label{thm:tauberian1}
	Let $f\in L^\infty(\Rdd)$, and assume that one of the following equivalent statements holds for some $A\in \C$: 
		
	\begin{enumerate}[(i)]
		\item There is some $S\in \mathcal{W}$ such that 
	\begin{equation*}
  f\star S=A\cdot \tr(S)\cdot I_{L^2} +K 
\end{equation*}
for some compact operator $K\in \compacts$.
		\item There is some $a\in W(\Rdd)$ such that
	\begin{equation*}
  f\ast a=A\cdot \int_{\Rdd} a (z) \ dz+h 
\end{equation*}
for some $h\in C_0(\Rdd)$.
	\end{enumerate} 
	 Then both of the following statements hold:
	 \begin{enumerate}
	 	\item For any $T\in \tco$, $f\star T=A\cdot \tr(T)\cdot I_{L^2} +K_T$ for some compact operator $K_T\in \compacts$.
	 	\item For any $g\in L^1(\Rdd)$, $f\ast g=A\cdot\int_{\Rdd}g(z) \ dz +h_g$ for some $h_g\in C_0(\Rdd)$.
	 \end{enumerate} 
\end{restatable*}
We note that the equivalence $(ii)\iff (2)$ is Wiener's original Tauberian theorem. Similarly to Wiener's Tauberian theorem, this theorem concerns the asymptotic properties of the operator $R$ when we use the common intuition that asymptotic properties of an operator are properties that are invariant under compact perturbations, see \cite[Chap.3]{Arveson:2002}. There is also a version of the preceding theorem for bounded operators:
\begin{restatable*}[Tauberian theorem for bounded operators]{thm}{tauberianoperator}  \label{thm:tauberian2}
	Let $R\in \bo$, and assume that one of the following equivalent statements holds for some $A\in \C$: 
		
	\begin{enumerate}[(i)]
		\item There is some $S\in \mathcal{W}$ such that 
	\begin{equation*}
  R\star S=A\cdot \tr(S)+h 
\end{equation*}
for some $h\in C_0(\Rdd)$.
		\item There is some $a\in W(\Rdd)$ such that 
	\begin{equation*}
  R\star a=A\cdot \int_{\Rdd} a (z) \ dz\cdot I_{L^2}  +K 
\end{equation*}
for some compact operator $K\in \compacts$.
	\end{enumerate} 
	 Then both of the following statements hold:
	 \begin{enumerate}
	 	\item For any $T\in \tco$, $R\star T=A\cdot \tr(T)+h_T$ for some $h_T\in C_0(\Rdd)$.
	 	\item For any $g\in L^1(\Rdd)$, $R\star g=A\cdot\int_{\Rdd}g(z) \ dz \cdot I_{L^2}  +K_g$ for some compact operator $K_g\in \compacts$.
	 \end{enumerate} 
\end{restatable*}

These Tauberian theorems have numerous applications to localization operators, Toeplitz operators and quantization schemes. The link to localization operators allows us to add another equivalent assumption to Theorem \ref{thm:tauberian1}, formulated in terms of the short-time Fourier transform. Recall that the short-time Fourier transform $V_\phi\psi$ of $\psi$ for the window $\phi$ is given by $V_\phi\psi(z)=\langle \psi,\pi(z)\phi\rangle$. 
\begin{restatable*}{prop}{fernandezgalbis} \label{prop:fernandezgalbis}
Let $A\in \mathbb{C}$. Then
	$f\in L^\infty(\Rdd)$ satisfies the equivalent conditions $(i)$ and $(ii)$ in Theorem \ref{thm:tauberian1} if and only if 
	\begin{enumerate}[(iii)]
		\item There is some non-zero Schwartz function $\Phi$ on $\R^{2d}$ such that for every $R>0$ 
	 \begin{equation*} 
	\lim_{|x|\to \infty} \sup_{|\omega|\leq R} |V_\Phi (f-A)(x,\omega)| =0. 
\end{equation*}
	\end{enumerate}
\end{restatable*}
As condition $(ii )$ in Theorem \ref{thm:tauberian1} is the condition from Wiener's classical Tauberian theorem, condition $(iii)$ above, which first appeared in the context of localization operators in \cite{Fernandez:2006}, is a new characterization of the functions to which Wiener's classical Tauberian theorem applies. 

To be precise, the localization operator $\mathcal{A}^{\varphi_1,\varphi_2}_f$ with \textit{mask} $f\in L^\infty(\Rdd)$ and  \textit{windows} $\varphi_1,\varphi_2 \in L^2(\Rd)$, is defined by 
\begin{equation*}
  \mathcal{A}^{\varphi_1,\varphi_2}_f(\psi) = \int_{\Rdd} f(z) V_{\varphi_1}\psi(z)\pi(z) \varphi_2 \ dz.
\end{equation*}
The link from localization operators to Theorem \ref{thm:tauberian1} is then the simple relation $\mathcal{A}^{\varphi_1,\varphi_2}_f=f\star (\varphi_2\otimes \varphi_1)$, where $\varphi_2\otimes \varphi_1(\psi)=\inner{\psi}{\varphi_1}_{L^2}\varphi_2$. Localization operators are further linked to Toeplitz operators on \textit{Gabor spaces} $V_\varphi(L^2)$ -- which contain the Bargmann-Fock space as a special case -- this allows the study of Toeplitz operators using Theorem \ref{thm:tauberian1}.

% Let $\varphi \in L^2(\Rd)$ with $\|\varphi\|_{L^2}=1$. The short-time Fourier transform $$V_\varphi:L^2(\Rd) \to L^2(\Rdd)$$ is an isometry with adjoint operator $$V_\varphi^* F=\int_{\Rdd} F(z) \pi(z)\varphi \ dz \quad \text{ for } F\in L^2(\Rdd).$$  
 
 The \textit{Gabor space} associated with $\varphi$ with $\|\varphi\|_{L^2}=1$ is $V_\varphi(L^2):=V_\varphi(L^2(\Rd))\subset L^2(\Rdd)$. 
 %Moyal's identity yields  that $V_\varphi V_\varphi^*=\mathcal{P}_{V_\varphi (L^2)}$, where $\mathcal{P}_{V_\varphi (L^2)}$ denotes the orthogonal projection onto the subspace $V_\varphi (L^2)$ of $L^2(\Rdd)$. 
 The Gabor space $V_\varphi (L^2)$ is a reproducing kernel Hilbert space with reproducing kernel 
 \begin{equation*} 
  k^\varphi_z(z')=\inner{\pi(z)\varphi}{\pi(z')\varphi}_{L^2}=V_\varphi (\pi(z)\varphi)(z'),
\end{equation*}
for any $\psi \in L^2(\Rd)$. We will show that the intersection of different Gabor spaces is trivial whenever the windows are not scalar multiples of each other. Every $f\in L^\infty(\Rdd)$ then defines a \textit{Gabor Toeplitz} operator $T_f^\varphi:V_\varphi (L^2)\to V_\varphi (L^2)$ by 
\begin{equation*}
  T_f^\varphi(V_\varphi \psi) = \mathcal{P}_{V_\varphi (L^2)} (f\cdot V_\varphi \psi),
\end{equation*}
where $\mathcal{P}_{V_\varphi (L^2)}:L^2(\Rdd)\to V_\varphi(L^2)$ is the orthogonal projection.
%We will use the map $\Theta^\varphi:\mathcal{L}(V_\varphi(L^2))\to \bo$ defined by 
%\begin{equation} \label{eq:theta}
%  \Theta^\varphi(\tilde{T}):=V_{\varphi}^*\vert_{V_{\varphi}(L^2)}\tilde{T}V_\varphi \quad \text{ for } \tilde{T} \in %\mathcal{L}(V_\varphi(L^2)).
%\end{equation}
%to study Toeplitz operators.
%As $V_\varphi:L^2(\Rd)\to V_\varphi(L^2)$ is unitary, $\Theta^\varphi$ encodes a unitary equivalence, and is easily seen to be a linear, multiplicative and isometric isomorphism. We obtain the following well-known and easily verified result. 
It is well-known that $T_f^\varphi$ and $\mathcal{A}_f^{\varphi,\varphi}$ are unitarily equivalent. %$\mathcal{A}_f^{\varphi,\varphi}=\Theta^\varphi(T_f^\varphi)$.

If the window function $\varphi$ is the Gaussian $\varphi_0(x)=e^{-\pi x^2}$, then $V_{\varphi_0}(L^2)$ is, up to a simple unitary transformation, the space of entire functions on $\mathbb{C}^d$ known as the Bargmann-Fock space $\mathcal{F}^2(\C^d)$,
For every $F\in L^\infty(\mathbb{C}^d)$ one defines the \textit{Bargmann-Fock Toeplitz operator} $T^{\mathcal{F}^2}_F$ on $\mathcal{F}^2(\mathbb{C}^d)$ by
\begin{equation*}
  T^{\mathcal{F}^2}_F (H) = \mathcal{P}_{\mathcal{F}^2} (F\cdot H)
\end{equation*}
for any $H \in \mathcal{F}^2(\mathbb{C}^d)$. One has that if $f\in L^\infty(\Rdd)$ and $F\in L^\infty(\C^d)$ are related by $F(x+i\omega)=f(x,-\omega)$ the the following operators are unitarily equvialent:

	\begin{enumerate}
		\item The localization operator $\mathcal{A}^{\varphi_0,\varphi_0}_f:L^2(\Rd)\to L^2(\Rd)$.
		\item The Gabor Toeplitz operator $T_f^{\varphi_0}:V_{\varphi_0}(L^2) \to V_{\varphi_0}(L^2)$.
		\item The Bargmann-Fock Toeplitz operator $T_F^{\mathcal{F}^2}:\mathcal{F}^2(\mathbb{C}^d)\to \mathcal{F}^2(\mathbb{C}^d).$
	\end{enumerate}
Since $\mathcal{A}^{\varphi_0,\varphi_0}_f=f\star(\varphi_0\otimes \varphi_0)$, the equivalences above allow us to translate statements from convolutions of operators to Toeplitz operators. One of the results we translate to Toeplitz operators follows by noting that the Tauberian theorems concern compact perturbations of a scaling of the identity, i.e. operators $A\cdot I_{L^2}  +K$ for $0\neq A \in \mathbb{C}$ and $K\in \compacts$. Inspired by this --  without using the Tauberian theorem itself -- we apply Riesz' theory of such operators to obtain sufficient conditions for localization operators to be isomorphisms:
\begin{restatable*}{prop}{localizationisomorphism} \label{prop:isomorphisms}
Let $0\neq M\in \mathbb{R}$, $a\in L^\infty(\Rdd)$ and $\Delta \subset \Rdd$ a set of finite Lebesgue measure. Assume that the following assumptions hold:
\begin{enumerate}[(i)]
	\item $a(z)\geq -M$ for a.e. $z\in \Rdd$,
	\item $a(z) > -M$ for $z\notin \Delta$,
	\item $a$ satisfies assumption $(i)$ or $(ii)$ in Theorem \ref{thm:tauberian1} with $A=0.$
\end{enumerate}
Let $f=M+a$. Then $\mathcal{A}_{f}^{\varphi,\varphi}$ is an isomorphism on $L^2(\Rd)$ for any $0\neq \varphi\in L^2(\Rd).$
\end{restatable*}

We translate these results to the polyanalytic Bargmann-Fock space $\mathcal{F}^2_n(\C^d)$ for $n\in \N^d$ -- in particular $\mathcal{F}^2_0(\C^d)$ is the Bargmann-Fock space $\mathcal{F}^2(\C^d)$. 
\begin{restatable*}{prop}{BFisomorphism} \label{prop:BFisomorphism}
	\begin{enumerate}
		\item If $\Omega \subset \mathbb{C}^d$ satisfies that $\Omega^c$ has finite Lebesgue measure, then $T^{\mathcal{F}^2_n}_{\chi_\Omega}$ is an isomorphism on $\mathcal{F}^2_n(\C^d).$
		\item There is a real-valued, continuous $F\in L^\infty(\C^d)$ such that $\lim_{|z|\to \infty} |F(z)|$ does not exist, yet $T_{F}^{\mathcal{F}^2_n}$ is an isomorphism on $\mathcal{F}^2_n(\C^d).$
	\end{enumerate}
\end{restatable*}

Another class of our results concerns the \textit{Berezin transform}. For the Gabor space $V_\varphi (L^2)$ we can express the Berezin transform $\mathfrak{B}^\varphi:V_\varphi(L^2)\to L^\infty(\Rdd)$ as a convolution of operators. In particular, the Berezin transform of the Gabor Toeplitz operator $T_f^\varphi$ is simply a convolution of functions:
	\begin{equation*}
  \mathfrak{B}^\varphi T_f^\varphi(z)=\left( f\ast |V_\varphi \varphi|^2\right) (z).
\end{equation*}

Pitt's classical theorem gives a condition on $f\in L^\infty(\Rdd)$ that ensures that $f\ast g\in C_0(\Rdd)$ for $g\in W(\Rdd)$ implies $f\in C_0(\Rdd)$. In particular, this holds for uniformly continuous $f$. A natural analogue of uniformly continuous functions for operators is the set $$\mathcal{C}_1 :=\{R\in \bo: z\mapsto \alpha_z(R) \text{ is continuous from } \Rdd \text{ to } \bo\},$$ see \cite{Bekka:1990,Werner:1984}. Werner has obtained the following result in \cite{Werner:1984} which in light of our Tauberian theorem is an analogue of Pitt's theorem for operators.
\begin{restatable*}{thm}{wernerpitt} \label{thm:wernerpitt}
	Let $R\in \mathcal{C}_1$. The following are equivalent.
	\begin{itemize}
		\item $R\in \compacts$.
		\item $R\star S \in C_0(\Rdd)$ for some $S\in \mathcal{W}$.
		\item $R\star f\in \compacts$ for some $f\in W(\Rdd)$.
	\end{itemize}
\end{restatable*}
 Fulsche \cite{Fulsche:2019} has recently noted that the preceding theorem implies a result in \cite{Bauer:2012} for the Bargmann-Fock space. We show that the result holds for any Gabor space $V_\varphi(L^2)$ under certain conditions on $\varphi$. We would like to stress that it is a Pitt-type theorem for the Tauberian theorem for operators. 

\begin{restatable*}{thm}{axlerzheng} \label{thm:axlerzheng}
Let $\varphi\in L^2(\Rd)$ with $\|\varphi\|_{L^2}=1$ satisfy that $V_\varphi \varphi$ has no zeros, and let $\mathcal{T}^\varphi$ be the Banach algebra generated by Toeplitz operators $T^\varphi_f\subset \mathcal{L}(V_\varphi(L^2))$ for $f\in L^\infty(\Rdd)$. Then the following are equivalent for $\tilde{T}\in \mathcal{T}^\varphi$. 
\begin{itemize}
	\item $\tilde{T}$ is a compact operator on $V_\varphi(L^2)$.
	\item $\mathfrak{B}^\varphi \tilde{T}\in C_0(\Rdd).$
\end{itemize}
Furthermore, if $\tilde{T}=T_{f}^\varphi$ for some slowly oscillating $f\in L^\infty(\Rdd)$, then the conditions above are equivalent to $\lim_{|z|\to \infty}|f(z)|=0$.
\end{restatable*}

Examples of $\varphi$ satisfying that $V_\varphi \varphi$ has no zeros were recently investigated in \cite{Grochenig:2019}, for example the one-sided exponential. Hence these $\varphi$'s give different reproducing kernel Hilbert spaces $V_\varphi (L^2)$ such that Toeplitz operators are compact if and only if their Berezin transform vanishes at infinity.

The main result in \cite{Bauer:2012} follows in particular, as shown in \cite{Fulsche:2019}. We have added a statement on slowly oscillating functions that follows from the original version of Pitt's theorem.
\begin{restatable*}[Bauer, Isralowitz]{thm}{bauerisralowitz} 
Let $\mathcal{T}^{\mathcal{F}^2}$ be the Banach algebra generated by the Toeplitz operators $T^{\mathcal{F}^2}_F$ for $F\in L^\infty(\C^d)$. The following are equivalent for $\tilde{T}\in \mathcal{T}^{\mathcal{F}^2}$.

\begin{itemize}
	\item $\tilde{T}$ is a compact operator on $\mathcal{F}^2(\C^d).$
	\item $\mathfrak{B}^{\mathcal{F}^2}\tilde{T}\in C_0(\C^d).$
\end{itemize}	
If $\tilde{T}=T^{\mathcal{F}^2}_F$ for a slowly oscillating $F\in L^\infty(\mathbb{C}^d)$, then the conditions above are equivalent to $\lim_{|z|\to \infty}F(z)=0$.
\end{restatable*}
As a consequence we state a compactness result for Toeplitz operators.
\begin{restatable*}{cor}{toeplitzchar}
A Toeplitz operator $T^{\mathcal{F}^2}_F$ for $F\in L^\infty(\C^d)$ is a compact operator on $\mathcal{F}^2(\C^d)$ if and only if $$f\ast |V_{\varphi_0} \varphi_0|^2 \in C_0(\Rdd),$$ where $f(x,\omega)=F(x-i\omega)$ for $x,\omega \in \Rd$ and $|V_{\varphi_0} \varphi_0(z)|^2=e^{-\pi |z|^2}.$	
\end{restatable*}
Finally, Theorem \ref{thm:wernerpitt} gives a simple condition for compactness of localization operators in terms of the Gaussian $\varphi_0$.

\begin{restatable*}{prop}{localizationchar} \label{prop:charcompact}
	Let $f\in L^\infty(\Rdd)$ and $\psi_1,\psi_2\in L^2(\Rd)$. The localization operator $\mathcal{A}^{\psi_1,\psi_2}_f$ is compact if and only if 
	\begin{equation*}
  f\ast (V_{\varphi_0}\psi_2\overline{V_{\varphi_0}\psi_1}) \in C_0(\Rdd).
\end{equation*}
\end{restatable*}

Finally we recall from \cite{Luef:2018b} that any $R\in \bo$ defines a quantization scheme given by $f\mapsto f\star R$ for $f\in L^1(\Rdd)$ and a time-frequency distribution $Q_R$, given by sending $\psi \in L^2(\Rd)$ to $Q_R(\psi)(z) = (\psi \otimes \psi) \star \check{R}(z)$  for $z\in \Rdd$. The distribution $Q_R$ is of Cohen's class since we have 
  $Q_R(\psi)= \weyl_{\check{R}}\ast W(\psi,\psi)$,  where $\weyl_{\check{R}}$ is the Weyl symbol of $\check{R}$ and $W(\psi,\psi)$ the Wigner distribution of $\psi$.
  
  In the final section we deduce a statement relating compactness properties of the quantization scheme of $f\mapsto f\star R$ to properties of $Q_R(\psi)$.

\begin{restatable*}{prop}{quantization} \label{prop:quantization}
	Let $R\in \bo$. The following are equivalent.
	\begin{enumerate}[(i)]
		\item $Q_R(\varphi)\in C_0(\Rdd)$ for \textit{some} $\varphi \in L^2(\Rd)$ such that $V_\varphi \varphi$ has no zeros.
		\item $g\star R\in \compacts$ for \textit{some} $g\in W(\Rdd)$.
		\item $Q_R(\psi)\in C_0(\Rdd)$ for \textit{all} $\psi \in L^2(\Rd)$.
		\item $f\star R\in \compacts$ for \textit{all} $f\in L^1(\Rdd).$
	\end{enumerate} 
\end{restatable*}
Hence if one takes the Gaussian $\varphi_0$ for $(i)$, then checking if $Q_R(\varphi_0)\in C_0(\Rdd)$ provides a simple test for checking whether Conditions $(iii)$ and $(iv)$ hold. We apply this result to Shubin's $\tau$-quantization scheme and Born-Jordan quantization.

\subsection{Notations and conventions}
For topological vector spaces $X,Y$, we denote by $\mathcal{L}(X,Y)$ the set of continuous, linear operators from $X$ to $Y$. If $X=Y$ we write $\mathcal{L(X)}=\mathcal{L}(X,X).$ The space of compact operators on $L^2(\Rd)$ is denoted by $\compacts$. For $1\leq p < \infty$ we let $\SC^p$ denote the Schatten p-class of compact operators with singular values in $\ell^p$, and we use the convention that $\SC^\infty = \bo$. In particular, $\tco$ denotes the space of trace class operators on $L^2(\Rd)$, and the trace of a trace class operator $T\in \tco$ is denoted by $\tr(T)$. Also, $\SC^2$ is the space of Hilbert-Schmidt operators, which form a Hilbert space with respect to the inner product $\inner{S}{T}_{\HS}=\tr(ST^*).$

Given a topological vector space $X$ and its continuous dual $X'$, the action of $x^* \in X'$ on $y\in x$ is denoted by $\inner{x^*}{y}_{X',X}$. To agree with the Hilbert space inner product we use the convention that the duality bracket is linear in the first coordinate and antilinear in the second coordinate. The Schwartz functions on $\Rd$ are denoted by $\mathscr{S}(\Rd)$.

The Euclidean norm on $\mathbb{R}^d$ or $\mathbb{C}^d$ will be denoted by $|\cdot|$. For $\Omega \subset \Rd$, $\chi_\Omega$ denotes the characteristic function of $\Omega$. As usual, $C_0(\Rd)$ denotes the continuous functions on $\Rd$ vanishing at infinity, and we use $L^0(\Rd)$ to denote the space of measurable, bounded functions $f$ on  $\Rd$ such that $\lim_{|z|\to \infty} f(z)=0$,i.e. for every $\epsilon>0$ there is $R>0$ such that $|f(z)|<\epsilon$ for a.e. $|z|>R.$ We will refer to $L^p$-spaces on $\Rd,\Rdd$ and $\mathbb{C}^d$, and sometimes we will omit explicit reference to the underlying space when it is clear from the context, for instance by writing $\bo$ for $\mathcal{L}(L^2(\Rd))$. In all statements, measurability and "almost everywhere" properties will refer to Lebesgue measure.

\section{Preliminaries}
 
\subsection{Concepts from time-frequency analysis}

The mathematical theory of time-frequency analysis will provide the setup and many of the tools we use in this paper. We therefore introduce the \textit{time-frequency shifts} $\pi(z)\in \bo $ for $z=(x,\omega)\in \Rdd$, given by  $$\left(\pi(z)\psi\right)(t)=e^{2\pi i  \omega\cdot t}\psi(t-x) \quad \text{ for } \psi \in L^2(\Rd).$$ The time-frequency shift $\pi(z)$ is clearly given as a composition $\pi(z)=M_\omega T_x$ of a \textit{modulation operator} $M_\omega\psi(t)=e^{2\pi i \omega \cdot t}\psi(t)$ and a \textit{translation operator} $T_x\psi(t)=\psi(t-x).$ Given $\psi,\phi\in L^2(\Rd)$, the \textit{short-time Fourier transform} $V_\phi \psi$ of $\psi$ with window $\phi$ is the function on $\Rdd$ defined by $$V_{\phi}\psi(z)=\inner{\psi}{\pi(z) \phi}_{L^2}\quad \text{ for } z\in \Rdd.$$
The short-time Fourier transform satisfies the important orthogonality relation 
\begin{equation} \label{eq:moyal}
  \int_{\Rdd} V_{\phi_1}\psi_1(z)\overline{V_{\phi_2}\psi_2(z)} \ dz = \inner{\psi_1}{\psi_2}_{L^2}\inner{\phi_2}{\phi_1}_{L^2},
\end{equation}
 see \cite{Grochenig:2001,Folland:1989}, sometimes called Moyal's identity. Throughout this paper we will use $\varphi_0$ to denote the normalized Gaussian
\begin{equation*}
  \varphi_0 (t) = 2^{d/4}e^{-\pi t^2}\quad \text{ for } t\in \Rd,
\end{equation*}
and we will often refer to its short-time Fourier transform, which by \cite[Lem. 1.5.2]{Grochenig:2001} is given by
\begin{equation}\label{eq:stftgauss}
	V_{\varphi_0}\varphi_0(z)=e^{-\pi i x\cdot \omega}e^{-\pi |z|^2/2}\quad \text{ for } z=(x,\omega);
\end{equation}
the reader should note already at this point that $V_{\varphi_0}\varphi_0$ has no zeros.

\subsubsection{Wigner functions and the Weyl transform}

Given $\phi,\psi \in L^2(\Rd)$, a close relative of the short-time Fourier transform $V_{\phi}\psi$ is the \textit{cross-Wigner distribution} $W(\psi,\phi)$ defined by 
\begin{equation*}
  W(\psi,\phi)(x,\omega)=\int_{\Rd} \psi(x+t/2)\overline{\phi(x-t/2)}e^{-2\pi i \omega \cdot t} \ dt \quad \text{ for } (x,\omega) \in \Rdd.
\end{equation*}

The cross-Wigner distribution is the main tool needed to introduce the \textit{Weyl transform}, which associates to any $f\in \mathscr{S}'(\Rdd)$ an operator $L_f\in \mathcal{L}(\mathscr{S}(\Rd), \mathscr{S}'(\Rd))$ defined by requiring 
\begin{equation} \label{eq:weyldef}
  \inner{L_f(\psi)}{\phi}_{\mathscr{S}'(\Rd),\mathscr{S}(\Rd)}=\inner{f}{W(\phi,\psi)}_{\mathscr{S}'(\Rdd),\mathscr{S}(\Rdd)} \quad \text{ for all } \phi,\psi \in \mathscr{S}(\Rd).
\end{equation}
By the Schwartz kernel theorem \cite{Hormander:1983}, any $S\in \mathcal{L}(\mathscr{S}(\Rd), \mathscr{S}'(\Rd))$ is the Weyl transform $L_f$ for some unique $f\in \mathscr{S}'(\Rdd)$. We denote this $f$ by $\weyl_S$, and call it the \textit{Weyl symbol} of $S$. In other words, $S=L_{\weyl_S}$. Note that there is no relationship between boundedness of the function $f$ and boundedness of the operator $L_f$ on $L^2(\Rd)$: there is $f\in L^\infty(\Rdd)$ such that $L_f\notin \bo$, and there is $S\in \bo$ such that $\weyl_S\notin L^\infty(\Rdd).$ See Remark \ref{rem:weylboundedness} for examples.

\begin{exmp}[Rank-one operators] 
	Given $\psi,\phi\in L^2(\Rd)$, the rank-one operator $\psi\otimes \phi\in \bo$ is defined by $$(\psi\otimes \phi)(\xi)=\inner{\xi}{\phi}_{L^2}\psi \quad \text{ for } \xi\in L^2(\Rd).$$ It is well-known that the Weyl symbol of $\psi\otimes \phi$ is $W(\psi,\phi).$
\end{exmp}

\subsubsection{Localization operators}

For a \textit{mask} $f\in L^\infty(\Rdd)$ and a pair of \textit{windows} $\varphi_1,\varphi_2 \in L^2(\Rd)$, we define the \textit{localization operator} $\mathcal{A}^{\varphi_1,\varphi_2}_f(\psi)\in \bo $ by 
\begin{equation*}
  \mathcal{A}^{\varphi_1,\varphi_2}_f(\psi) = \int_{\Rdd} f(z) V_{\varphi_1}\psi(z)\pi(z) \varphi_2 \ dz,
\end{equation*}
where the integral is interpreted weakly in the sense that we require
\begin{equation} \label{eq:weaklocop}
  \inner{\mathcal{A}^{\varphi_1,\varphi_2}_f(\psi)}{\phi}_{L^2(\Rd)} = \inner{f}{V_{\varphi_2}\phi \overline{V_{\varphi_1}\psi}}_{L^2(\Rdd)} \quad \text{ for any } \psi,\phi \in L^2(\Rd).
\end{equation}
It is well-known that $\mathcal{A}^{\varphi_1,\varphi_2}_f$ is bounded on $L^2(\Rd)$ for $f\in L^\infty(\Rdd)$ and $\varphi_1,\varphi_2 \in L^2(\Rd)$ \cite{Cordero:2003}, but one may also define localization operators for other Banach function spaces of masks $f$ and windows $\varphi_1,\varphi_2$ by interpreting  the brackets in \eqref{eq:weaklocop} as duality brackets, see \cite{Cordero:2003}. We postpone this discussion until we have a more suitable framework, which we now introduce.

\subsection{Quantum harmonic analysis: convolutions of operators and functions} \label{sec:convolutions}
In this section we introduce the quantum harmonic analysis developed by Werner in \cite{Werner:1984}, the main concepts of which are convolutions of operators and functions and a Fourier transform of operators. For a more detailed introduction in our terminology we refer to \cite{Luef:2018c}. Given any $z\in \Rdd$ and an operator $R\in \bo$, we define the \textit{translation} $\alpha_z(R)$ of $R$ by $z$ to be the operator
\begin{equation*}
  \alpha_z(R)=\pi(z)R\pi(z)^*.
\end{equation*}
At the level of Weyl symbols, we have that $$\alpha_z(R)=L_{T_z(\weyl_R)},$$ hence $\alpha_z$ corresponds to a translation of the Weyl symbol.
For $f\in L^1(\Rdd)$ and $S\in \tco$ we then define the convolution $f\star S\in \tco$ by the Bochner integral
\begin{equation} \label{eq:defconvfunop}
  f\star S := S\star f:= \int_{\Rdd} f(z) \alpha_z(S) \ dz.
\end{equation}

Hence the convolution of a function with an operator is a new operator. The convolution $S\star T$ of two operators $S,T\in \tco$ is the \textit{function}
\begin{equation} \label{eq:defconvopop}
  S\star T(z) = \tr(S\alpha_z (\check{T}))\quad \text{ for } z\in \Rdd.
\end{equation}
Here $\check{T}=PTP$, with $P$ the parity operator $P\psi(t)=\psi(-t).$ Then $S\star T\in L^1(\Rdd)$ with $\int_{\Rdd} S\star T(z) \ dz = \tr(S)\tr(T)$ and $S\star T=T\star S$\cite{Werner:1984}. Taking convolutions with a fixed operator or function is easily seen to be a linear map. 

One of the most important properties of the convolutions \eqref{eq:defconvfunop} and \eqref{eq:defconvopop} is that they interact nicely with each other and with the usual convolution $f\ast g(x)=\int_{\Rd} f(t)g(x-t)\ dt$ of functions, as is most strikingly shown by their associativity\cite{Werner:1984,Luef:2018c}. 

\begin{prop}
	The convolutions \eqref{eq:defconvfunop} and \eqref{eq:defconvopop} are  associative. Written out in detail, this means that for $S,T,R\in \tco$ and $f,g\in L^1(\Rdd)$ we have
	\begin{align*}
		(R\star S)\star T &= R \star (S\star T)  \\
		f\ast (R\star S) &= (f\star R) \star T \\ 
		(f\ast g) \star R &= f\star (g\star R).
	\end{align*}
\end{prop}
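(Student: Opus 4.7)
My approach is to reduce all three associativities to the associativity and commutativity of pointwise multiplication via Fourier-type intertwining identities. Specifically, for $f,g\in\Ldd$ and $S,T\in\tco$ I would first establish
\begin{align*}
\F_\sigma(f\ast g) &= \F_\sigma(f)\cdot \F_\sigma(g),\\
\F_W(f\star S) &= \F_\sigma(f)\cdot \F_W(S),\\
\F_\sigma(S\star T) &= \F_W(S)\cdot \F_W(T).
\end{align*}
The first is classical. For the second, I would write $\F_W(f\star S)(\zeta) = \tr(\pi(\zeta)^*(f\star S))$, push $\pi(\zeta)^*$ inside the Bochner integral defining $f\star S$, and apply cyclicity of the trace together with the Weyl commutation relation $\pi(\zeta)\pi(z) = e^{-2\pi i x\cdot \eta}\pi(\zeta+z)$ (with $\zeta=(y,\eta)$, $z=(x,\omega)$) to produce the symplectic character kernel that defines $\F_\sigma(f)$. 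The third identity is derived in the same spirit, the parity $P$ in $\check T$ being precisely what is needed to extract $\F_W(T)$ rather than a reflection of it.

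\textbf{Closing the argument.} Once the three intertwining identities are in place, each associativity is immediate by commutativity of pointwise multiplication. For example, applying $\F_W$ to both sides of the first identity gives
\begin{align*}
\F_W((R\star S)\star T) &= \F_\sigma(R\star S)\,\F_W(T) = \F_W(R)\F_W(S)\F_W(T),\\
\F_W(R\star(S\star T)) &= \F_W(R)\,\F_\sigma(S\star T) = \F_W(R)\F_W(S)\F_W(T),
\end{align*}
and injectivity of $\F_W$ on $\tco$ (essentially the operator Plancherel theorem of Werner) forces the two operators to coincide. The identity $f\ast(R\star S) = (f\star R)\star S$ --- where I read the printed $T$ on the right-hand side as a typo, since no $T$ occurs on the left --- follows by taking $\F_\sigma$ and factoring $\F_\sigma(f)\F_W(R)\F_W(S)$ two ways; the identity $(f\ast g)\star R = f\star(g\star R)$ follows similarly by taking $\F_W$.

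\textbf{Main obstacle.} The only real work lies in the careful bookkeeping of the unimodular cocycle $e^{-2\pi i x\cdot \eta}$ and of the parity $P$ when deriving the intertwining identities, together with the justification of Fubini/Bochner interchanges between the trace and integrals (which are legitimate by absolute integrability in trace norm). A more pedestrian alternative, avoiding the Fourier machinery entirely, is a direct Fubini argument on each identity: for instance,
\begin{equation*}
(f\ast g)\star R = \iint f(u)g(w-u)\alpha_w(R)\,du\,dw = \int f(u)\,\alpha_u(g\star R)\,du = f\star(g\star R),
\end{equation*}
by the substitution $w=u+v$ and the fact that $z\mapsto \alpha_z$ is a group action (i.e.\ $\alpha_{u+v}=\alpha_u\circ\alpha_v$, which follows from $\pi(u+v)=c\,\pi(u)\pi(v)$ with $|c|=1$). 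The analogous manipulations for the other two identities reduce to rewriting the trace via Weyl symbols or via the Hilbert--Schmidt inner product and applying Fubini; I would use this direct route as a backup if the phase bookkeeping in the Fourier proof becomes cumbersome.
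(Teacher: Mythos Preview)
Your approach is correct. Note, however, that the paper does not actually supply a proof of this proposition: it simply cites \cite{Werner:1984,Luef:2018c}. So there is no in-paper argument to compare against directly; what one can compare against is Werner's original treatment.

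Your Fourier-transform route is essentially Werner's own strategy in \cite{Werner:1984}: he establishes the convolution theorems $\F_W(f\star S)=\F_\sigma(f)\F_W(S)$ and $\F_\sigma(S\star T)=\F_W(S)\F_W(T)$ directly from the definitions (these are exactly \eqref{eq:fsigmaconv} and \eqref{eq:fwignerconv} in the present paper, stated \emph{after} the associativity proposition), and then associativity drops out by the injectivity of $\F_W$ and $\F_\sigma$. The crucial point, which you address correctly, is that the convolution theorems themselves can be proved without invoking associativity, so there is no circularity. Your ``backup'' Fubini computation for $(f\ast g)\star R$ is also perfectly valid and is closer to the style of \cite{Luef:2018c}.

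Two minor remarks. First, your reading of the printed $T$ in the second identity as a typo for $S$ is correct; as written the identity is ill-formed since $T$ does not occur on the left. Second, your claimed injectivity of $\F_W$ on $\tco$ is justified exactly as you indicate: $\tco\subset\HS$ and $\F_W:\HS\to L^2(\Rdd)$ is unitary, so $\F_W$ is injective on $\tco$; likewise $\F_\sigma$ is injective on $L^1(\Rdd)$ by the standard uniqueness theorem for the Fourier transform.
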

\begin{rem}
Special cases of this associativity have appeared several times in the literature, typically with less transparent formulations and proofs than those allowed by the convolution formalism. See for instance \cite[Prop. 3.10]{Fernandez:2006}.	
\end{rem}
The convolutions also have an interesting interpretation in terms of the Weyl symbol, as we have that 
\begin{align}
	S\star T(z)&=\weyl_S \ast \weyl_T(z) \label{eq:opconvweyl} \\
	\weyl_{f\star S}(z)&=f\ast \weyl_S(z)\nonumber. 
\end{align}
As is shown in detail in \cite{Luef:2018c}, one can extend the domains of the convolutions by duality. For instance, the convolution $f\star S\in \bo$ of $S\in \tco$ and $f\in L^\infty(\Rdd)$ is defined by $$\inner{f\star S}{T}_{\bo,\tco}=\inner{f}{\check{S}^*\star T}_{L^\infty,L^1}.$$ Combining this with a complex interpolation argument gives a version of Young's inequality \cite{Werner:1984,Luef:2018c}. Recall our convention that $\SC^\infty=\bo.$

\begin{prop}[Young's inequality]\label{prop:young}
	Let $1\leq p,q,r \leq \infty$ be such that $\frac{1}{p}+\frac{1}{q}=1+\frac{1}{r}$. If $f\in L^p(\R^{2d}), S \in \SC^p$ and $T\in \SC^q$, then $f\star T\in \SC^r$ and $S\star T\in L^r(\Rdd)$ may be defined and satisfy the norm estimates
	\begin{align*}
  		\|f\star T\|_{\SC^r} &\leq \|f\|_{L^p} \|T\|_{\SC^q}, \\
  		\|S\star T\|_{L^r} &\leq \|S\|_{\SC^p} \|T\|_{\SC^q}.  		
	\end{align*}
\end{prop}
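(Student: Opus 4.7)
The plan is to follow the classical strategy for Young's inequality: first verify the corner/endpoint cases directly from the definitions, then extend the convolutions to wider ranges of exponents by duality, and finally use complex interpolation to fill in the intermediate cases.

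For the endpoints, the case $p=q=r=1$ for $f\star T$ is a direct consequence of the Bochner-integral definition \eqref{eq:defconvfunop}, combined with the fact that $\alpha_z$ is an isometry on $\SC^1$ (since $\pi(z)$ is unitary): we simply estimate
\begin{equation*}
  \|f\star T\|_{\SC^1}\leq \int_{\Rdd} |f(z)|\,\|\alpha_z(T)\|_{\SC^1}\,dz=\|f\|_{L^1}\|T\|_{\SC^1}.
\end{equation*}
The case $p=1$, $q=r=\infty$ is identical after noting that $\alpha_z$ is also an isometry on $\bo$. For the operator-operator convolution, the case $p=q=1$, $r=1$ already appears in the statement $\int S\star T(z)\,dz=\tr(S)\tr(T)$ quoted from \cite{Werner:1984}, which in fact upgrades to the norm bound $\|S\star T\|_{L^1}\leq\|S\|_{\SC^1}\|T\|_{\SC^1}$ after inserting absolute values and using that $|\tr(S\alpha_z(\check T))|\leq \|S\|_{\SC^1}\|T\|_{\SC^1}$ yields the $r=\infty$ endpoint $p=1$, $q=\infty$ similarly.

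Next, I would extend by duality. The duality relation
\begin{equation*}
  \inner{f\star S}{T}_{\bo,\tco}=\inner{f}{\check{S}^*\star T}_{L^\infty,L^1},
\end{equation*}
already recorded in the excerpt, combined with the $p=q=1$ endpoint for $S\star T$, gives the bound $\|f\star S\|_{\bo}\leq \|f\|_{L^\infty}\|S\|_{\SC^1}$. By the analogous duality for operator-operator convolutions against $L^\infty$ one obtains the bound $\|S\star T\|_{L^\infty}\leq \|S\|_{\SC^1}\|T\|_{\bo}$. This secures the four extremal corners in the Young diagram for each of the two convolutions.

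Finally, to reach the intermediate $(p,q,r)$, I would apply complex interpolation, using that the scales $(L^p(\Rdd))_{1\leq p\leq\infty}$ and $(\SC^p)_{1\leq p\leq\infty}$ are both complex interpolation scales (the latter being the non-commutative $L^p$ scale, with interpolation proven in the Schatten setting via Stein's theorem for analytic families). I would fix $T\in\SC^q$ and interpolate the linear map $f\mapsto f\star T$ between the endpoint boundedness statements established above, and similarly interpolate in the other argument to obtain the full bilinear estimate. The same bilinear-interpolation argument applies to $(S,T)\mapsto S\star T$. The main obstacle is purely bookkeeping: one must verify that the various definitions by duality agree on overlapping domains, so that the interpolation is unambiguous and the resulting object on each intermediate scale coincides with the naive $\SC^r$- or $L^r$-valued convolution. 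This compatibility is straightforward but tedious, and is the technical content carried in the references \cite{Werner:1984,Luef:2018c}.
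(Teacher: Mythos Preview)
Your proposal is correct and follows precisely the strategy the paper itself indicates: the paper does not give a self-contained proof of this proposition but states it as a known result, remarking just before the statement that one extends the convolutions by duality and then combines this ``with a complex interpolation argument'' to obtain Young's inequality, citing \cite{Werner:1984,Luef:2018c} for the details. Your outline---endpoint cases from the definitions, extension by the duality bracket already recorded in the text, then complex interpolation on the $L^p$/$\SC^p$ scales---is exactly this route.

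One minor wording issue: the $L^1$ bound $\|S\star T\|_{L^1}\leq\|S\|_{\SC^1}\|T\|_{\SC^1}$ does not follow simply by ``inserting absolute values'' into the identity $\int S\star T\,dz=\tr(S)\tr(T)$; one needs the singular value decompositions of $S$ and $T$ (or the positive-case reduction) to control $\int|S\star T(z)|\,dz$. This is the actual content of the $L^1$ endpoint in \cite{Werner:1984}. Apart from that, your sketch is accurate and matches the paper's cited approach.
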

\begin{rem}
	It is worth noting that if $S\in \tco$ and $T\in \bo$, then $S\star T$ is still given by \eqref{eq:defconvopop}, which can be interpreted pointwise, so that $S\star T$ is a continuous, bounded function. 
\end{rem}

Young's inequality above shows that the convolutions interact in a predictable way with $L^p(\Rdd)$ and $\SC^q$. We now show that the same is true for functions vanishing at infinity and compact operators. Recall that $L^0(\Rdd)$ denotes the Banach subspace of $L^\infty(\Rdd)$ consisting of $f\in L^\infty(\Rdd)$ that vanish at infinity. The following result shows that convolutions with trace class operators interchange $L^0(\Rdd)$ and $\compacts,$ which is the basis for our main theorems. These results are known, in particular we mention that part $(ii)$ was proved for rank-one operators $S$ in \cite{Boggiatto:2004a} using essentially the same proof.
\begin{lem}\label{lem:compactL0}
Let $R\in \compacts$ and $f\in L^0(\Rdd).$ If $S\in \tco$, then
\begin{enumerate}[(i)]
	\item $R\star S \in C_0(\Rdd)$,
	\item $f\star S \in \compacts,$
\suspend{enumerate}
and if $a\in L^1(\Rdd)$ then 
\resume{enumerate}[{[(i)]}]
	\item $R\star a\in \compacts$,
	\item $f\ast a  \in C_0(\Rdd)$. 
\end{enumerate}
\end{lem}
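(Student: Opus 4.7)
The plan is to prove all four statements by density arguments, controlling error terms via Young's inequality (Proposition \ref{prop:young}) and reducing to elementary cases. Two standard approximations are used throughout: finite rank operators are norm-dense in $\compacts$, and for $f\in L^0(\Rdd)$ the truncations $f_n:=f\cdot \chi_{B_n}$ (with $B_n$ the Euclidean ball of radius $n$) belong to $L^1\cap L^\infty$ and satisfy $\|f-f_n\|_\infty\to 0$ since $f$ vanishes at infinity.

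For (i), Young's inequality gives $\|R\star S\|_\infty \leq \|R\|_{\bo}\|S\|_{\tco}$, so it suffices to verify $R\star S\in C_0(\Rdd)$ for rank-one $R=\psi\otimes\phi$; linearity then handles finite rank, and density of finite rank operators in $\compacts$ concludes via uniform convergence. For such $R$ one computes $R\star S(z)=\inner{\check{S}\pi(z)^*\psi}{\pi(z)^*\phi}_{L^2}$. The key input is that $V_\phi\xi\in C_0(\Rdd)$ for all $\xi,\phi\in L^2(\Rd)$, equivalently $\pi(z)^*\psi\rightharpoonup 0$ weakly as $|z|\to\infty$. Since $\check{S}\in\tco\subset\compacts$, a compact operator sends weakly null sequences to norm null ones, so $\check{S}\pi(z)^*\psi\to 0$ in $L^2$-norm, and Cauchy--Schwarz together with $\|\pi(z)^*\phi\|=\|\phi\|$ forces the inner product to zero. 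Continuity in $z$ follows from strong continuity of $z\mapsto\pi(z)$ and the fact that trace is continuous on $\tco$.

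For (iv), writing $f\ast a(x)=\int f(y)a(x-y)\,dy$, the estimate $|f\ast a(x)-f\ast a(x')|\leq \|f\|_\infty \|T_x\tilde a - T_{x'}\tilde a\|_{L^1}$ combined with strong continuity of translation on $L^1$ gives continuity of $f\ast a$. For vanishing at infinity, approximate $a$ in $L^1$ by compactly supported $a_n$ with $\mathrm{supp}(a_n)\subset B_{M_n}$; then for $|x|>R+M_n$ one has $|f\ast a_n(x)|\leq \|a_n\|_{L^1}\cdot\mathrm{ess\,sup}_{|w|\geq R}|f(w)|$, which tends to $0$, while $\|f\ast a-f\ast a_n\|_\infty\leq\|f\|_\infty\|a-a_n\|_{L^1}\to 0$ exhibits $f\ast a$ as a uniform limit of continuous functions vanishing at infinity.

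For (ii) and (iii) the strategy is symmetric. For (ii), each $f_n$ lies in $L^1$, so $f_n\star S\in\tco\subset\compacts$, and Young yields $\|f\star S-f_n\star S\|_{\bo}\leq \|f-f_n\|_\infty\|S\|_{\tco}\to 0$, exhibiting $f\star S$ as a norm limit of compact operators. For (iii), approximate $R$ by finite rank $R_n$ in operator norm; each $R_n\in\tco$, hence $a\star R_n\in\tco$, and Young gives $\|a\star R-a\star R_n\|_{\bo}\leq \|a\|_{L^1}\|R-R_n\|_{\bo}\to 0$. The only non-algebraic ingredient throughout is the fact $V_\phi\xi\in C_0(\Rdd)$ underlying (i), which itself follows from Moyal's identity together with density of Schwartz functions; everything else is bookkeeping with Young's inequality and the two density facts.
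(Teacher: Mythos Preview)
Your proof is correct. Parts (ii) and (iii) match the paper's argument exactly (truncate $f$ by $\chi_{B_n}$, approximate $R$ by finite rank, and invoke Young's inequality). For (iv) the paper also truncates $f$ rather than approximating $a$, relying on the fact that $f_n\ast a\in C_0(\Rdd)$ when $f_n$ is bounded with compact support; your approximation of $a$ by compactly supported $a_n$ is a harmless variant that makes the vanishing-at-infinity estimate more explicit.

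The only genuine difference is in (i): the paper simply cites \cite[Prop.~4.6]{Luef:2018c} and gives no argument, whereas you supply a self-contained proof by reducing to rank-one $R=\psi\otimes\phi$ and using that $\pi(z)^*\psi\rightharpoonup 0$ weakly (equivalently $V_\xi\psi\in C_0(\Rdd)$ for all $\xi$) together with the fact that the compact operator $\check{S}$ upgrades weak convergence to norm convergence. This is a clean and instructive argument that the paper omits; it has the advantage of being independent of the cited source and of making transparent exactly which property of $L^2$ (the Riemann--Lebesgue-type decay of the STFT) drives the result.
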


\begin{proof}
	Part $(i)$ is \cite[Prop. 4.6]{Luef:2018c}. For $(ii)$ and $(iv)$, note that any $f\in L^0(\Rdd)$ is the limit in the norm topology of $L^\infty(\Rdd)$ of a sequence of compactly supported functions $f_n$ -- simply pick $f_n=f\cdot \chi_{B_n(0)},$ where $B_n(0)=\{z\in \Rdd:|z|<n\}$. Clearly $f_n\in L^1(\Rdd)$, hence $f_n\star S\in \tco \subset \compacts$. We therefore have by Young's inequality (recall that $\SC^\infty = \bo$): $$\|f\star S-f_n\star S\|_{\bo}=\|(f-f_n)\star S\|_{\bo}\leq \|f-f_n\|_{L^\infty} \|S\|_{\tco}\to 0 \text{ as } n\to \infty,$$ so $f\star S$ is the limit in the operator norm of compact operators, hence itself compact. Similarly, $f_n\ast a\in C_0(\Rdd)$ and  $f_n\ast a$ converges uniformly to $f\ast a$ by Young's inequality $\|(f-f_n)\ast a\|_{L^\infty}\leq \|f-f_n\|_{L^\infty} \|a\|_{L^1}$, so that $f\ast a  \in C_0(\Rdd).$ Finally, $(iii)$ follows by noting that any $R\in \compacts$ is the limit in the operator norm of a sequence $R_n \in \tco$ of finite-rank operators. Then $R_n \star a\in \tco$ is compact, so it follows by $\|(R-R_n)\star a\|_{\bo}\leq \|R-R_n\|_{\bo} \|a\|_{L^1}$ that $R\star a$ is the limit in the operator norm of a sequence of compact operators, hence itself compact. 
\end{proof} 
\begin{rem}\label{rem:sufficientcompact}
	In combination with Proposition \ref{prop:young} and the fact that $\SC^p\subset \compacts$ for $p<\infty$, we see that $L^p(\Rdd)\star \tco \subset \compacts$ for $p=0$ and $1\leq p <\infty$.
\end{rem}

Finally, the convolutions preserve identity elements\cite[Prop. 3.2 (3)]{Werner:1984}. Here $I_{L^2}\in \bo$ is the identity operator and $1\in L^\infty(\Rdd)$ is given by $1(z)=z$.
\begin{lem} \label{lem:convolutionswithidentity}
	Let $S\in \tco$ and $f\in L^1(\Rdd)$. Then 
	\begin{align*}
		S\star I_{L^2} &= \tr(S)\cdot 1, \\
		S\star 1 &= \tr(S)\cdot I_{L^2}, \\
		f\star I_{L^2} &= \int_{\Rdd} f(z) \ dz \cdot I_{L^2}, \\
		f\ast 1 &= \int_{\Rdd} f(z) \ dz \cdot 1.
	\end{align*}
\end{lem}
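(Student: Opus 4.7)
The plan is to verify the four identities separately; (1), (3), and (4) fall out of the definitions with essentially no work, while (2) requires the duality extension of the convolution.

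For identity (3), I would unfold \eqref{eq:defconvfunop} after noting that $\alpha_z(I_{L^2})=\pi(z)\pi(z)^*=I_{L^2}$ by unitarity of the time-frequency shifts, so the constant $I_{L^2}$ pulls out of the Bochner integral. The same observation, combined with $\check{I_{L^2}}=P^2=I_{L^2}$, collapses the trace in \eqref{eq:defconvopop} to $\tr(S\cdot I_{L^2})=\tr(S)$ pointwise in $z$, giving (1). Identity (4) is the elementary computation $(f\ast 1)(z)=\int_{\Rdd} f(w)\,dw$, which is independent of $z$ and equals $\int f \cdot 1(z)$.

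Identity (2) is the one real issue, since $1\in L^\infty(\Rdd)\setminus L^1(\Rdd)$ and so $S\star 1$ exists only through the duality construction described immediately before Proposition \ref{prop:young}, namely
\begin{equation*}
  \inner{1\star S}{T}_{\bo,\tco}=\inner{1}{\check{S}^*\star T}_{L^\infty,L^1}, \qquad T\in \tco,
\end{equation*}
where the right-hand side is well-defined because $\check{S}^*\star T\in L^1(\Rdd)$ by Proposition \ref{prop:young}. I would evaluate this pairing explicitly: using the trace identity $\int_{\Rdd} R\star T(z)\,dz=\tr(R)\tr(T)$ from Section \ref{sec:convolutions}, together with $\tr(\check{S}^*)=\tr(PS^*P)=\tr(S^*)$, the right-hand side reduces (after accounting for the antilinearity in the second slot) to $\tr(S)\,\overline{\tr(T)}$. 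On the other side, a direct computation gives $\inner{\tr(S)\cdot I_{L^2}}{T}_{\bo,\tco}=\tr(S)\,\overline{\tr(T)}$ as well, so the two operators agree after testing against every $T\in \tco$, and nondegeneracy of the $(\bo,\tco)$-duality yields (2).

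The main obstacle is the bookkeeping in (2): one must keep the complex conjugation conventions of the two duality pairings aligned, and correctly track the interactions among $\check{\cdot}$, adjoint, and trace. Once these are laid out, the whole identity collapses onto the single basic formula $\int_{\Rdd} S\star T=\tr(S)\tr(T)$ already recorded in Section \ref{sec:convolutions}, so no genuinely new tool is required beyond Proposition \ref{prop:young} for integrability of $\check{S}^*\star T$.
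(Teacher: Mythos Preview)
Your argument is correct. The paper itself does not supply a proof of this lemma; it simply cites \cite[Prop.~3.2~(3)]{Werner:1984} and states the identities. Your direct verification---pulling the constant $I_{L^2}$ out of the Bochner integral and the trace for (1), (3), (4), and unwinding the duality definition of $1\star S$ for (2) via the integral formula $\int_{\Rdd} R\star T\,dz=\tr(R)\tr(T)$---is exactly the right way to see why the identities hold, and the bookkeeping with conjugates and $\check{\cdot}$ you flag is handled correctly.
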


\subsubsection{Fourier transforms of functions and operators}

As our Fourier transform of functions on $\Rdd$ we will use the symplectic Fourier transform $\F_\sigma$, given, for $f\in L^1(\Rdd)$, by 
\begin{equation*}
\F_{\sigma} f(z)=\int_{\R^{2d}} f(z') e^{-2 \pi i \sigma(z,z')} \ dz' \quad \text{ for } z \in \Rdd,
\end{equation*} where $\sigma$ is the standard symplectic form $\sigma((x_1,\omega_1),(x_2, \omega_2))=\omega_1\cdot x_2-\omega_2 \cdot x_1$. Clearly  $\F_\sigma$ is related to the usual Fourier transform $\widehat{f}(z)=\int_{\Rdd} f(z') e^{-2\pi i z\cdot z'} \ dz'$ by  $$\F_\sigma(f)(x,\omega)=\widehat{f}(\omega,-x),$$ so $\F_\sigma$ shares most properties with $\widehat{f}$: it extends to a unitary operator on $L^2(\Rdd)$ and to a bijection on $\mathscr{S}'(\Rdd)$ -- see \cite{deGosson:2011}. In addition, $\F_\sigma$ is its own inverse: $\F_\sigma \circ \F_\sigma =I_{L^2} $.

We will also use a Fourier transform of operators, namely the Fourier-Wigner transform $\F_W$ introduced by Werner \cite{Werner:1984} (Werner calls it the Fourier-Weyl transform, our usage of Fourier-Wigner agrees with \cite{Folland:1989}). When $S\in \tco$, $\F_W(S)$ is the function 
\begin{equation} \label{eq:deffourierwigner}
	\F_W(S)(z)=e^{-\pi i x \cdot \omega} \tr(\pi(-z)S)\quad \text{ for } z=(x,\omega)\in \Rdd.
\end{equation}
 As is shown in \cite{Werner:1984,Luef:2018b}, $\F_W$ extends to a unitary mapping $\F_W:\HS \to L^2(\Rdd)$ and a bijection onto $\mathscr{S}'(\Rdd)$ from $\mathcal{L}(\mathscr{S}'(\Rd), \mathscr{S}(\Rd))$. 
 
 The Fourier transforms interact in the expected way with convolutions \cite{Werner:1984}: if $S,T\in \tco$ and $f\in L^1(\Rdd)$, then
 \begin{align}
  \F_\sigma(S\star T)&=\F_W(S)\cdot\F_W(T),\label{eq:fsigmaconv} \\
  \F_W(f\star S)&= \F_\sigma(f)\cdot \F_W(S)\label{eq:fwignerconv}.
\end{align}
 We may also connect $\F_W$ and $\F_\sigma$ by the Weyl transform. In fact, we have by \cite[Prop. 3.16]{Luef:2018b} that
\begin{equation} \label{eq:weyltransformfw}
  \F_W(L_f)=\F_\sigma(f) \text{ for } f\in \mathscr{S}'(\Rdd).
\end{equation}

A main concern for this paper will be functions and operators satisfying that the appropriate Fourier transform never vanishes. Following the notation of \cite{Korevaar:2004} for the function case, we introduce the following notation:
\begin{align*}
  W(\Rdd)&:=\{ f\in L^1(\Rdd):\F_\sigma(f)(z)\neq 0 \text{ for any } z\in \Rdd \}, \\
  \mathcal{W}&:= \{S\in \tco : \F_W(S)(z)\neq 0 \text{ for any }z\in \Rdd \}. 
\end{align*}

The key tool for proving the Tauberian theorem for operators is the following generalization of Wiener's approximation theorem, originally proved by Werner \cite{Werner:1984}. See also \cite{Kiukas:2012,Luef:2018c} for more general statements.

\begin{thm}[Werner]\label{thm:wernerapproximation}
	Let $S\in \tco$. The following are equivalent.
	
	\begin{enumerate}
		\item The  linear span of the translates $\{\alpha_z(S)\}_{z\in \Rdd}$ is dense in $\tco$.
		\item $S\in \mathcal{W}$.
		\item The set $L^1(\Rdd)\star S=\{f\star S:f\in L^1(\Rdd)\}$ is dense in $\tco$.
		\item The map $T\mapsto S\star T$ is injective from $\bo$ to $L^\infty(\Rdd)$. 
		\item The set $\tco \star S=\{T \star S:T\in \tco\}$ is dense in $L^1(\Rdd).$
		\item The map $f\mapsto f\star S$ is injective from $L^\infty(\Rdd)$ to $\bo.$
	\end{enumerate}
\end{thm}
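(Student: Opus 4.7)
My strategy is to cluster the six statements around the Fourier-analytic condition (2), linking them by Hahn--Banach duality between $(\tco,\bo)$ and $(L^1(\Rdd),L^\infty(\Rdd))$. The ``non-Fourier'' equivalences $(1)\Leftrightarrow (3)\Leftrightarrow (4)$ on the operator side and $(5)\Leftrightarrow (6)$ on the function side can be dispatched together: the annihilators in $\bo=(\tco)^*$ of $\text{span}\{\alpha_z(S):z\in\Rdd\}$ and of $L^1(\Rdd)\star S$ coincide, since $\tr(T(f\star S))=\int f(z)\tr(T\alpha_z(S))\,dz$ for every $f\in L^1$, so both equal the set of $T\in\bo$ with $\tr(T\alpha_z(S))=0$ for every $z$. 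Cyclicity of the trace and the definition $S\star T(z)=\tr(S\alpha_z(\check T))$ rewrite $\tr(T\alpha_z(S))$ as $(S\star\check T)(-z)$, so Hahn--Banach turns these density statements into injectivity of $T\mapsto S\star T$ on $\bo$ (modulo the bijection $T\mapsto\check T$), which is (4). A dual Hahn--Banach argument with $L^1/L^\infty$ duality, based on $\int f(z)(T\star S)(z)\,dz=\tr(T(f\star\check S))$, delivers $(5)\Leftrightarrow (6)$.

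The Fourier-analytic heart is then $(2)\Leftrightarrow (4)$ and $(2)\Leftrightarrow (6)$. The direction ``$(2)$ fails $\Rightarrow$ $(4)$ and $(6)$ fail'' is explicit: if $\F_W(S)(z_0)=0$, then $T:=\pi(z_0)\in\bo$ and $f(z):=e^{-2\pi i\sigma(z,z_0)}\in L^\infty$ are nonzero witnesses to failure. Indeed, $\check{\pi(z_0)}=\pi(-z_0)$ together with the Weyl commutation relation $\alpha_z(\pi(-z_0))=e^{-2\pi i\sigma(z,z_0)}\pi(-z_0)$ give
\[
(S\star \pi(z_0))(z)=e^{-2\pi i\sigma(z,z_0)}\tr(S\pi(-z_0))=e^{-2\pi i\sigma(z,z_0)}e^{\pi ix_0\omega_0}\F_W(S)(z_0)=0,
\]
while $\F_\sigma(f)=\delta_{z_0}$ combined with the product formula gives $\F_W(f\star S)=\F_W(S)(z_0)\,\delta_{z_0}=0$, hence $f\star S=0$ by injectivity of $\F_W$ on the space of operators.

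For the opposite directions $(2)\Rightarrow (4)$ and $(2)\Rightarrow (6)$, both problems reduce, via the product formulas $\F_\sigma(S\star T)=\F_W(S)\F_W(T)$ and $\F_W(f\star S)=\F_\sigma(f)\F_W(S)$ extended distributionally to $\tco\times\bo$ and $L^\infty\times\tco$, to the \emph{divisibility problem}: if $\F_W(S)\in C_b(\Rdd)$ is nowhere zero and $u\in\mathscr{S}'(\Rdd)$ satisfies $\F_W(S)\cdot u=0$, then $u=0$. This will be the main obstacle, because mere continuity of the multiplier is too weak to divide a generic tempered distribution. I would resolve it by regularization: for $g\in\mathscr{S}(\Rdd)$ associativity of the convolutions gives $(g\star S)\star T=g\ast(S\star T)=0$ and $\F_W(g\star S)=\F_\sigma(g)\F_W(S)$; choosing $g$ so that $\F_\sigma(g)$ is compactly supported and nonvanishing on a prescribed open $U\subset\Rdd$ makes the product $\F_\sigma(g)\F_W(S)$ smooth and bounded away from zero on $U$, so classical division by a smooth nonvanishing symbol forces $\F_W(T)|_U=0$. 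Varying $U$ over an open cover of $\Rdd$ yields $\F_W(T)=0$, hence $T=0$; the identical argument on the function side completes $(2)\Rightarrow (6)$.
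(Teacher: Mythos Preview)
The paper does not prove this theorem itself; it is quoted from Werner \cite{Werner:1984} (with further references to \cite{Kiukas:2012,Luef:2018c}) and used as a black box. There is thus no in-paper argument to compare against, and your proposal must stand on its own.

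Your Hahn--Banach reductions $(1)\Leftrightarrow(3)\Leftrightarrow(4)$ and $(5)\Leftrightarrow(6)$ are correct and efficient, and the explicit witnesses for $\neg(2)\Rightarrow\neg(4)$ and $\neg(2)\Rightarrow\neg(6)$ are fine. The genuine gap lies in the converse directions $(2)\Rightarrow(4)$ and $(2)\Rightarrow(6)$. You correctly diagnose the obstacle --- a tempered distribution cannot in general be multiplied by a merely continuous function --- but your regularization does not cure it. For $g\in\mathscr{S}(\Rdd)$ the function $\F_W(g\star S)=\F_\sigma(g)\,\F_W(S)$ is a product of a Schwartz function with a function that is only in $C_0(\Rdd)$, hence still merely continuous; your assertion that it is ``smooth and bounded away from zero on $U$'' fails in the first adjective. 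Without smoothness you can neither form the product $\F_W(g\star S)\cdot\F_W(T)$ for arbitrary $T\in\bo$ nor divide by it. Note also that the identity $(g\star S)\star T=g\ast(S\star T)=g\ast 0=0$ is a tautology once $S\star T=0$ is assumed, so passing from $S$ to $g\star S$ carries no new information about $T$.

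The standard repair --- essentially Werner's --- bypasses distributional division entirely by reducing to the \emph{classical} Wiener approximation theorem on $L^1(\Rdd)$. For $(2)\Rightarrow(6)$: if $f\star S=0$ with $f\in L^\infty(\Rdd)$, then associativity gives $f\ast(S\star S)=(f\star S)\star S=0$; since $\F_\sigma(S\star S)=\F_W(S)^2$ never vanishes one has $S\star S\in W(\Rdd)$, and the classical theorem (injectivity on $L^\infty$ of convolution by an element of $W(\Rdd)$) forces $f=0$. For $(2)\Rightarrow(4)$: from $S\star T=0$ one obtains $(S\star S)\star T=0$ in $\bo$; convolving with an arbitrary $R\in\tco$ and using associativity yields $(S\star S)\ast(R\star T)=0$ with $R\star T\in L^\infty(\Rdd)$, so the classical theorem gives $R\star T=0$ for every $R\in\tco$. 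Taking rank-one $R=\phi\otimes\psi$ then shows $\langle\alpha_z(\check T)\psi,\phi\rangle_{L^2}=0$ for all $z,\psi,\phi$, hence $T=0$.
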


\subsubsection{The special case of rank-one operators}
When $S\in \tco$ is a rank-one operator $\psi\otimes \phi$ for $\psi,\phi\in L^2(\Rd)$, then many of the concepts introduced above are familiar concepts from time-frequency analysis. First we note that by \cite[Thm. 5.1]{Luef:2018c}, localization operators $\mathcal{A}^{\varphi_1,\varphi_2}_f$ can be described as convolutions by
\begin{equation} \label{eq:locopsasconv}
  \mathcal{A}^{\varphi_1,\varphi_2}_f = f \star (\varphi_2 \otimes \varphi_1).
\end{equation}

 Other convolutions and Fourier-Wigner transforms of rank-one operators are summarized in the next lemma. See \cite[Thm. 5.1 and Lem. 6.1]{Luef:2018c} for proofs. Here $\check{\varphi}(t):=(P\varphi)(t)=\varphi(-t).$
 
\begin{lem} \label{lem:rankonecase}
	Let $\varphi_1,\varphi_2,\xi_1,\xi_2 \in L^2(\Rd)$ and $S\in \bo$. Then, for $(x,\omega)\in \Rdd$,
	\begin{enumerate}
		\item  $\F_W(\varphi_1\otimes \varphi_2)(x,\omega)=e^{i\pi x\cdot \omega} V_{\varphi_2}\varphi_1(x,\omega).$
		\item $S\star (\varphi_1\otimes \varphi_2)(z)=\inner{S\pi(z) \check{\varphi_1}}{\pi(z) \check{\varphi_2}}_{L^2}$.
		\item $(\xi_1 \otimes \xi_2)\star (\check{\varphi_1}\otimes \check{\varphi_2})(x,\omega)=V_{\varphi_2} \xi_1(x,\omega)\overline{V_{\varphi_1}\xi_2(x,\omega)}.$
	\end{enumerate}
	In particular, for $\xi,\varphi \in L^2(\Rd)$
	\begin{equation*}	
  (\xi \otimes \xi) \star (\check{\varphi}\otimes \check{\varphi})(z)=|V_\varphi \xi (z)|^2.
\end{equation*}
\end{lem}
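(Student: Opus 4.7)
The plan is to prove the three identities by unwinding the definitions, treating (1) and (2) as independent computations and then deriving (3) by combining them. The special case at the end is an immediate specialization of (3), so no separate argument is needed.

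For part (1), I will start from the definition $\F_W(S)(z)=e^{-\pi i x\cdot\omega}\tr(\pi(-z)S)$ and use the standard fact that $\tr((\psi\otimes\phi)A)=\langle A^*\psi,\phi\rangle$ in the form $\tr(\pi(-z)(\varphi_1\otimes\varphi_2))=\langle \pi(-z)\varphi_1,\varphi_2\rangle$. The task then reduces to relating $\langle \pi(-z)\varphi_1,\varphi_2\rangle$ to $V_{\varphi_2}\varphi_1(z)=\langle\varphi_1,\pi(z)\varphi_2\rangle$. A direct change of variables in the defining integral of $\pi(-z)$ yields the phase relation $\langle\pi(-z)\varphi_1,\varphi_2\rangle=e^{2\pi i x\cdot\omega}\langle\varphi_1,\pi(z)\varphi_2\rangle$, and combining with the $e^{-\pi i x\cdot \omega}$ prefactor in the definition of $\F_W$ gives the required $e^{\pi i x\cdot\omega}V_{\varphi_2}\varphi_1(z)$.

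For part (2), the essential preliminary observation is that the parity operator intertwines rank-one operators as $(\varphi_1\otimes\varphi_2)\check{\phantom{x}}=P(\varphi_1\otimes\varphi_2)P=\check{\varphi}_1\otimes\check{\varphi}_2$, and that translations act on rank-one operators by $\alpha_z(\psi\otimes\phi)=(\pi(z)\psi)\otimes(\pi(z)\phi)$ since $\pi(z)(\psi\otimes\phi)\pi(z)^*\xi=\pi(z)\langle\pi(z)^*\xi,\phi\rangle\psi=\langle\xi,\pi(z)\phi\rangle\pi(z)\psi$. Plugging this into the defining formula \eqref{eq:defconvopop} for $S\star(\varphi_1\otimes\varphi_2)(z)$ and again using the rank-one trace identity, the expression collapses to $\langle S\pi(z)\check{\varphi}_1,\pi(z)\check{\varphi}_2\rangle$.

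For part (3), I will specialize part (2) to $S=\xi_1\otimes\xi_2$ and windows $\check\varphi_1,\check\varphi_2$, noting that $(\check\varphi_i)\check{\phantom{x}}=\varphi_i$. The inner product $\langle(\xi_1\otimes\xi_2)\pi(z)\varphi_1,\pi(z)\varphi_2\rangle$ then factors, using the definition of the rank-one operator, into $\langle\pi(z)\varphi_1,\xi_2\rangle\langle\xi_1,\pi(z)\varphi_2\rangle$, which is exactly $\overline{V_{\varphi_1}\xi_2(z)}\,V_{\varphi_2}\xi_1(z)$. Finally, setting $\xi_1=\xi_2=\xi$ and $\varphi_1=\varphi_2=\varphi$ in (3) yields the displayed identity $(\xi\otimes\xi)\star(\check\varphi\otimes\check\varphi)(z)=|V_\varphi\xi(z)|^2$.

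The only step requiring any care is the phase bookkeeping in part (1); everything else is a direct manipulation of traces and rank-one operators, so I do not anticipate a real obstacle. If one prefers a shorter argument, (3) can alternatively be obtained by checking it first for Schwartz $\xi_i,\varphi_i$ via the Weyl-symbol identity $S\star T=\weyl_S\ast\weyl_T$ from \eqref{eq:opconvweyl} together with $\weyl_{\xi_1\otimes\xi_2}=W(\xi_1,\xi_2)$, but the direct route sketched above is cleaner and avoids invoking Wigner-function identities.
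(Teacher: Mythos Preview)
Your argument is correct and self-contained; each of the three parts follows directly from the definitions exactly as you describe, and the phase bookkeeping in (1) checks out. The paper itself does not prove this lemma but defers to \cite[Thm.~5.1 and Lem.~6.1]{Luef:2018c}, so your direct computation is in fact more explicit than what appears here.

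One small slip in exposition: the general trace identity you quote should read $\tr\bigl(A(\psi\otimes\phi)\bigr)=\langle A\psi,\phi\rangle$ (equivalently $\tr\bigl((\psi\otimes\phi)A\bigr)=\langle\psi,A^*\phi\rangle$), not $\langle A^*\psi,\phi\rangle$. The specific instance you actually use, $\tr\bigl(\pi(-z)(\varphi_1\otimes\varphi_2)\bigr)=\langle\pi(-z)\varphi_1,\varphi_2\rangle$, is correct, so this does not affect the validity of the proof.
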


\begin{exmp}[Standard Gaussian]  \label{exmp:gaussianfw}
	By \eqref{eq:stftgauss}, $\F_W(\varphi_0\otimes \varphi_0)(z)=e^{-\pi|z|^2/2}$. We point out this simple case as it shows that $\varphi_0\otimes \varphi_0 \in \mathcal{W}$. In particular, $\mathcal{W}$ is non-empty. 
\end{exmp}

\section{Toeplitz operators and Berezin transforms}

In this section we will introduce some families of reproducing kernel Hilbert spaces and the corresponding Toeplitz operators and Berezin transforms. We will relate these spaces and operators to the convolutions introduced in Section \ref{sec:convolutions}, which will later allow us to deduce results for reproducing kernel Hilbert spaces from the main results this paper. By far the most studied of the spaces we consider is the Bargmann-Fock space $\mathcal{F}^2(\C^d)$, and we will later investigate whether some well-known result for $\mathcal{F}^2(\C^d)$ can hold for other of the reproducing kernel Hilbert spaces we consider. 

\subsection{Gabor spaces $V_\varphi (L^2)$} 

 Let $\varphi \in L^2(\Rd)$ with $\|\varphi\|_{L^2}=1$. By \eqref{eq:moyal}, the short-time Fourier transform $$V_\varphi:L^2(\Rd) \to L^2(\Rdd)$$ is an isometry, and one easily confirms that its adjoint operator is $$V_\varphi^* F=\int_{\Rdd} F(z) \pi(z)\varphi \ dz \quad \text{ for } F\in L^2(\Rdd),$$ where the vector-valued integral is interpreted in a weak sense, see \cite[Sec. 3.2]{Grochenig:2001} for details. The \textit{Gabor space} associated with $\varphi$ is then the image $V_\varphi(L^2(\Rd))\subset L^2(\Rdd)$, which we denote by $V_\varphi(L^2)$ for brevity. One can show using \eqref{eq:moyal} that 
 \begin{align}
  V_\varphi^* V_\varphi&=I_{L^2(\Rd)} ,\nonumber \\ %\label{eq:reconstructionSTFT}
  V_\varphi V_\varphi^*&=\mathcal{P}_{V_\varphi (L^2)}, \label{eq:reproducing}
\end{align}

 where $\mathcal{P}_{V_\varphi (L^2)}$ denotes the orthogonal projection onto the subspace $V_\varphi (L^2)$ of $L^2(\Rdd)$. This means that $V_\varphi$ is a unitary operator from $L^2(\Rd)$ to $V_\varphi(L^2)$, with inverse $V_\varphi^*\vert_{V_{\varphi}(L^2)}$. By writing out the operators in \eqref{eq:reproducing} one deduces that $V_\varphi (L^2)$ is a reproducing kernel Hilbert space with reproducing kernel 
 \begin{equation} \label{eq:reproducingkernelGabor}
  k^\varphi_z(z')=\inner{\pi(z)\varphi}{\pi(z')\varphi}_{L^2}=V_\varphi (\pi(z)\varphi)(z'),
\end{equation}
 meaning that we have the reproducing formula
 \begin{equation*} %\label{eq:reproducingkernel}
  V_\varphi \psi(z) =\inner{V_\varphi \psi}{k^\varphi_z}_{L^2(\Rdd)} 
\end{equation*}
for any $\psi \in L^2(\Rd)$. Every $f\in L^\infty(\Rdd)$ then defines a \textit{Gabor Toeplitz} operator $T_f^\varphi:V_\varphi (L^2)\to V_\varphi (L^2)$ by 
\begin{equation*}
  T_f^\varphi(V_\varphi \psi) = \mathcal{P}_{V_\varphi (L^2)} (f\cdot V_\varphi \psi).
\end{equation*}
To study such Toeplitz operators in this paper, we will use the map 
\begin{align} \label{eq:theta}
\Theta^\varphi&:\mathcal{L}(V_\varphi(L^2))\to \bo \nonumber \\
  \Theta^\varphi(\tilde{T})&:=V_{\varphi}^*\vert_{V_{\varphi}(L^2)}\tilde{T}V_\varphi \quad \text{ for } \tilde{T} \in \mathcal{L}(V_\varphi(L^2)).
\end{align}
As $V_\varphi:L^2(\Rd)\to V_\varphi(L^2)$ is unitary, $\Theta^\varphi$ encodes a unitary equivalence, and is easily seen to be a linear, multiplicative and isometric isomorphism. We obtain the following well-known and easily verified result. 

\begin{prop} \label{prop:locoptoeplitz}
	Let $\varphi\in L^2(\Rd)$ with $\|\varphi\|_{L^2}=1$ and $f\in L^\infty(\Rdd)$. Then $$\mathcal{A}_f^{\varphi,\varphi}=\Theta^\varphi(T_f^\varphi).$$ In particular, $T_f^\varphi$ and $\mathcal{A}_f^{\varphi,\varphi}$ are unitarily equivalent.
\end{prop}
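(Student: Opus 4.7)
The plan is to verify the identity $\mathcal{A}_f^{\varphi,\varphi}=\Theta^\varphi(T_f^\varphi)$ by a direct unfolding of definitions on an arbitrary $\psi\in L^2(\Rd)$; the unitary equivalence then follows because $V_\varphi$ is unitary from $L^2(\Rd)$ onto $V_\varphi(L^2)$ with inverse $V_\varphi^*|_{V_\varphi(L^2)}$, so that $\Theta^\varphi$ is manifestly a conjugation by a unitary.

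First I would write
\begin{equation*}
  \Theta^\varphi(T_f^\varphi)(\psi)=V_\varphi^*\bigl(T_f^\varphi(V_\varphi \psi)\bigr)=V_\varphi^*\bigl(\mathcal{P}_{V_\varphi(L^2)}(f\cdot V_\varphi\psi)\bigr).
\end{equation*}
The key simplification is that the projection is invisible under $V_\varphi^*$: combining the two identities in \eqref{eq:reproducing} gives $V_\varphi^*\mathcal{P}_{V_\varphi(L^2)}=V_\varphi^* V_\varphi V_\varphi^*=V_\varphi^*$. Hence
\begin{equation*}
  \Theta^\varphi(T_f^\varphi)(\psi)=V_\varphi^*\bigl(f\cdot V_\varphi\psi\bigr).
\end{equation*}

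Next I would invoke the weak integral representation of $V_\varphi^*$ recalled in the text, namely $V_\varphi^* F=\int_{\Rdd}F(z)\,\pi(z)\varphi\ dz$, applied to $F=f\cdot V_\varphi\psi\in L^2(\Rdd)$ (which lies in $L^2$ by boundedness of $f$ and the isometry property of $V_\varphi$). This yields
\begin{equation*}
  V_\varphi^*(f\cdot V_\varphi\psi)=\int_{\Rdd} f(z)\,V_\varphi\psi(z)\,\pi(z)\varphi\ dz,
\end{equation*}
which is exactly the weak definition of the localization operator $\mathcal{A}_f^{\varphi,\varphi}(\psi)$.

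There is essentially no obstacle beyond bookkeeping; the only point requiring mild care is to ensure the weak integral defining $V_\varphi^*$ and the weak integral \eqref{eq:weaklocop} defining $\mathcal{A}_f^{\varphi,\varphi}$ are interpreted consistently. To be safe I would pair both sides against an arbitrary $\phi\in L^2(\Rd)$ and verify
\begin{equation*}
  \inner{V_\varphi^*(f\cdot V_\varphi\psi)}{\phi}_{L^2(\Rd)}=\inner{f\cdot V_\varphi\psi}{V_\varphi\phi}_{L^2(\Rdd)}=\inner{f}{V_\varphi\phi\,\overline{V_\varphi\psi}}_{L^2(\Rdd)},
\end{equation*}
which matches \eqref{eq:weaklocop}. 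This confirms the operator identity, and since $\Theta^\varphi$ is conjugation by the unitary $V_\varphi$, the second assertion on unitary equivalence follows immediately.
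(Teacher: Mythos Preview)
Your proof is correct; it is precisely the direct verification the paper alludes to when it calls the result ``well-known and easily verified'' without supplying a proof. The key steps --- dropping the projection via $V_\varphi^*\mathcal{P}_{V_\varphi(L^2)}=V_\varphi^*$ and then recognizing $V_\varphi^*(f\cdot V_\varphi\psi)$ as the weak integral defining $\mathcal{A}_f^{\varphi,\varphi}(\psi)$ --- are exactly right, and your closing check against \eqref{eq:weaklocop} removes any ambiguity about the weak interpretation.
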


Now recall that in a reproducing kernel Hilbert space $\mathcal{H}$ consisting of functions on $\Rdd$ with normalized reproducing kernel $k_z$ for $z\in \Rdd$, the \textit{Berezin transform} $\mathfrak{B}\tilde{T}$ of a bounded operator $\tilde{T}\in \mathcal{L}(\mathcal{H})$ is the function $\Rdd \to \C$ defined by 
\begin{equation*}
	\mathfrak{B}\tilde{T}(z)=\inner{\tilde{T}k_z}{k_z}_{\mathcal{H}}.
\end{equation*}
For the Gabor space $V_\varphi (L^2)$ we can express the Berezin transform $\mathfrak{B}^\varphi:V_\varphi(L^2)\to L^\infty(\Rdd)$ as a convolution of operators.

\begin{lem} \label{lem:berezingabor}
	Let $\varphi \in L^2(\Rd)$ with $\|\varphi\|_{L^2}=1$, and let $\tilde{T}\in \mathcal{L}(V_\phi (L^2)).$  Then 
	\begin{equation*}
	\mathfrak{B}^\varphi \tilde{T}(z)=\Theta^\varphi(T)\star (\check{\varphi}\otimes \check{\varphi})(z).
	\end{equation*}
	In particular the Berezin transform of the Gabor Toeplitz operator $T_f^\varphi$ is
	\begin{equation*}
  \mathfrak{B}^\varphi T_f^\varphi(z)=\left( f\ast |V_\varphi \varphi|^2\right) (z).
\end{equation*}
\end{lem}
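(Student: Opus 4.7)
The plan is to compute the Berezin transform directly from its definition and recognize the resulting inner product as one of the rank-one convolution formulas already catalogued in Lemma \ref{lem:rankonecase}.

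First I would unpack the reproducing kernel. By \eqref{eq:reproducingkernelGabor} the kernel at $z$ is $k^\varphi_z = V_\varphi(\pi(z)\varphi)$, and it is automatically normalized because $V_\varphi$ is an isometry and $\|\pi(z)\varphi\|_{L^2}=\|\varphi\|_{L^2}=1$. Thus by definition
\[
\mathfrak{B}^\varphi \tilde{T}(z) = \inner{\tilde{T}V_\varphi(\pi(z)\varphi)}{V_\varphi(\pi(z)\varphi)}_{L^2(\Rdd)}.
\]
Next I would use the intertwining relation for $\Theta^\varphi$. Since $\tilde{T}$ maps $V_\varphi(L^2)$ into itself, $V_\varphi V_\varphi^\ast$ acts as the identity on $\tilde{T}V_\varphi\psi$, so $\tilde{T}V_\varphi = V_\varphi \Theta^\varphi(\tilde{T})$. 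Substituting and then using that $V_\varphi$ is an isometry from $L^2(\Rd)$ onto $V_\varphi(L^2)$, the inner product collapses to
\[
\mathfrak{B}^\varphi \tilde{T}(z) = \inner{\Theta^\varphi(\tilde{T})\pi(z)\varphi}{\pi(z)\varphi}_{L^2(\Rd)}.
\]
Now I would compare with Lemma \ref{lem:rankonecase}(2). Taking $S=\Theta^\varphi(\tilde{T})$ and $\varphi_1=\varphi_2=\check{\varphi}$ in that formula, and noting $\check{\check{\varphi}}=\varphi$, yields
\[
\Theta^\varphi(\tilde{T})\star(\check{\varphi}\otimes\check{\varphi})(z) = \inner{\Theta^\varphi(\tilde{T})\pi(z)\varphi}{\pi(z)\varphi}_{L^2},
\]
which is exactly what was computed for $\mathfrak{B}^\varphi\tilde{T}(z)$. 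This establishes the general identity.

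For the ``in particular'' clause, I would combine Proposition \ref{prop:locoptoeplitz} with the convolution description \eqref{eq:locopsasconv} of localization operators to get $\Theta^\varphi(T_f^\varphi)=\mathcal{A}_f^{\varphi,\varphi}=f\star(\varphi\otimes\varphi)$. Substituting into the identity just proved and invoking associativity of the convolutions,
\[
\mathfrak{B}^\varphi T_f^\varphi(z) = \bigl(f\star(\varphi\otimes\varphi)\bigr)\star(\check{\varphi}\otimes\check{\varphi})(z) = f\ast\bigl((\varphi\otimes\varphi)\star(\check{\varphi}\otimes\check{\varphi})\bigr)(z),
\]
and the last bracketed operator convolution equals $|V_\varphi\varphi|^2$ by the final displayed identity in Lemma \ref{lem:rankonecase}.

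There is no real obstacle here; the whole argument is a bookkeeping exercise that threads the definitions together. The only subtle point to be careful about is that in Lemma \ref{lem:rankonecase}(2) the windows appear with parity reflections, so one must insert $\check{\varphi}\otimes\check{\varphi}$ rather than $\varphi\otimes\varphi$ to get $\pi(z)\varphi$ on both sides of the inner product; miswriting this would produce a sign or reflection error that would propagate through to the ``in particular'' statement.
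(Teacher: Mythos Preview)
Your proof is correct and essentially identical to the paper's: both arguments string together the definition of $\mathfrak{B}^\varphi$ via $k^\varphi_z=V_\varphi(\pi(z)\varphi)$, the definition of $\Theta^\varphi$, and Lemma \ref{lem:rankonecase}(2) to reach the inner product $\inner{\Theta^\varphi(\tilde{T})\pi(z)\varphi}{\pi(z)\varphi}_{L^2}$, then handle the Toeplitz case via Proposition \ref{prop:locoptoeplitz}, \eqref{eq:locopsasconv}, associativity, and the final identity of Lemma \ref{lem:rankonecase}. The only cosmetic difference is direction: the paper starts from the convolution side and works toward $\mathfrak{B}^\varphi\tilde{T}(z)$, whereas you start from $\mathfrak{B}^\varphi\tilde{T}(z)$ and work toward the convolution.
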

\begin{proof}
	Since $k_z^\varphi (z')=V_{\varphi}(\pi(z)\varphi)(z')$ by \eqref{eq:reproducingkernelGabor}, we have 
	\begin{align*}
		\Theta^\varphi(\tilde{T})\star (\check{\varphi}\otimes \check{\varphi})(z) &= \inner{\Theta^\varphi(\tilde{T})\pi(z)\varphi}{\pi(z)\varphi}_{L^2(\Rd)} \text{ by Lemma \ref{lem:rankonecase}} \\
		&= \inner{V_\varphi^*\tilde{T}V_\varphi(\pi(z)\varphi)}{\pi(z)\varphi}_{L^2(\Rd)} \quad \text{ by \eqref{eq:theta}} \\
		&= \inner{\tilde{T}V_\varphi(\pi(z)\varphi)}{V_\varphi(\pi(z)\varphi)}_{L^2(\Rdd)} \\
		&= \mathfrak{B}^\varphi \tilde{T}(z).
	\end{align*}
 Since Proposition \ref{prop:locoptoeplitz} and \eqref{eq:locopsasconv} give that 
 $$f\star (\varphi\otimes \varphi)=\mathcal{A}_f^{\varphi,\varphi}=\Theta^\varphi(T_f^\varphi ) ,$$
  we get from the first part that and associativity of convolutions that 
 \begin{align*}
  \mathfrak{B}^\varphi T_f^\varphi=\left[f\star (\varphi \otimes \varphi)\right]\star (\check{\varphi}\otimes \check{\varphi})=f\ast |V_\varphi \varphi|^2 \quad \text{ by Lemma \ref{lem:rankonecase}.}
\end{align*}
\end{proof}

\begin{rem}
	Gabor spaces and their relation to localization operators has been discussed in \cite{Hutnik:2010}, with emphasis on $f\in L^\infty(\Rdd)$ depending only on $x.$ The reproducing kernel $k^\varphi_z$ has also been studied as the kernel of determinantal point processes called \textit{Weyl-Heisenberg ensembles} \cite{Abreu:2017whe,Abreu:2019}.
\end{rem}

\subsubsection{Gabor spaces with different windows}

Having introduced the Gabor spaces $V_\varphi(L^2)$, we naturally ask whether the properties of $V_\varphi (L^2)$ as a reproducing kernel Hilbert space depend on the window $\varphi$ in an essential way. As a first result in this direction, we note that the intersection of different Gabor spaces is trivial whenever the windows are not scalar multiples of each other, first proved with different methods in \cite[Thm. 4.2]{Ghandehari:2014}. 
\begin{lem}
	Let $\varphi_1,\varphi_2\in L^2(\Rd)$ with $\|\varphi_1\|_{L^2}=\|\varphi_2\|_{L^2}=1$. If there exists $c\in \mathbb{C}$ such that $\varphi_1=c\varphi_2$, then $V_{\varphi_1}(L^2)=V_{\varphi_2}(L^2)$. Otherwise $V_{\varphi_1}(L^2)\cap V_{\varphi_2}(L^2)=\{0\}$.
\end{lem}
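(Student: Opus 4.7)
The easy direction is immediate from the definition of the short-time Fourier transform. If $\varphi_1 = c\varphi_2$, then for every $\psi \in L^2(\Rd)$ one has $V_{\varphi_1}\psi(z) = \langle \psi, \pi(z)(c\varphi_2)\rangle = \bar c\, V_{\varphi_2}\psi(z)$. Since $\|\varphi_1\|_{L^2} = \|\varphi_2\|_{L^2} = 1$ forces $|c| = 1$, the scalar $\bar c$ is nonzero, so $V_{\varphi_1}(L^2) = \bar c\, V_{\varphi_2}(L^2) = V_{\varphi_2}(L^2)$.

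For the converse I would argue by contraposition: assume $V_{\varphi_1}(L^2)\cap V_{\varphi_2}(L^2)$ contains a nonzero function $F$ and show that the windows must be scalar multiples of each other. Write $F = V_{\varphi_1}\psi_1 = V_{\varphi_2}\psi_2$; since $V_{\varphi_i}$ is an isometry, $\psi_1$ and $\psi_2$ are both nonzero. The key step is to translate this equality of short-time Fourier transforms into an equality of rank-one operators using the Fourier-Wigner transform. By Lemma \ref{lem:rankonecase}(1),
\begin{equation*}
  \F_W(\psi_i \otimes \varphi_i)(x,\omega) = e^{i\pi x\cdot \omega}\, V_{\varphi_i}\psi_i(x,\omega),\qquad i = 1,2,
\end{equation*}
so the hypothesis $V_{\varphi_1}\psi_1 = V_{\varphi_2}\psi_2$ yields $\F_W(\psi_1\otimes \varphi_1) = \F_W(\psi_2\otimes \varphi_2)$. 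Since $\F_W$ is injective on $\HS$, I conclude $\psi_1 \otimes \varphi_1 = \psi_2 \otimes \varphi_2$ as Hilbert--Schmidt operators.

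It then remains to observe that two equal nonzero rank-one operators on a Hilbert space have proportional left factors and proportional right factors. Concretely, evaluating $\psi_1\otimes \varphi_1 = \psi_2\otimes \varphi_2$ on any $\xi \in L^2(\Rd)$ gives $\langle \xi,\varphi_1\rangle \psi_1 = \langle \xi,\varphi_2\rangle \psi_2$; choosing $\xi = \varphi_2/\|\varphi_2\|^2$ shows $\varphi_1$ must be a scalar multiple of $\varphi_2$ (otherwise picking $\xi \perp \varphi_2$ with $\langle \xi, \varphi_1\rangle \ne 0$ would force $\psi_1 = 0$). This delivers $\varphi_1 = c\varphi_2$ and finishes the contrapositive.

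The main step is the one I expect to require the most thought: noticing that equality of short-time Fourier transforms can be lifted to equality of rank-one operators via the Fourier-Wigner transform. Once this identification is made, everything else is formal; the normalization of the windows is only used cosmetically to ensure $c \ne 0$ in the forward direction.
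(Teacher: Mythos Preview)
Your proof is correct and follows essentially the same approach as the paper: both lift the equality $V_{\varphi_1}\psi_1=V_{\varphi_2}\psi_2$ to an equality of rank-one operators via the injectivity of $\F_W$ on $\HS$ and Lemma~\ref{lem:rankonecase}, then read off that the windows are proportional. The paper's endgame is slightly slicker---it takes adjoints to get $\varphi_1\otimes\xi=\varphi_2\otimes\psi$ and then applies this operator to $\xi$ to obtain $\varphi_1=\frac{\langle\xi,\psi\rangle}{\|\xi\|^2}\varphi_2$ in one line---whereas your argument via the parenthetical (picking $\xi\perp\varphi_2$ with $\langle\xi,\varphi_1\rangle\neq 0$) is equally valid but the sentence preceding it (``choosing $\xi=\varphi_2/\|\varphi_2\|^2$ shows $\varphi_1$ must be a scalar multiple of $\varphi_2$'') does not literally establish what you claim; that choice only shows $\psi_2$ is a multiple of $\psi_1$, and it is the parenthetical that carries the actual argument.
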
  

\begin{proof}
	If $\varphi_1=c\varphi_2$, then $V_{\varphi_1}\xi=V_{\varphi_2}(\overline{c}\xi)$, which implies the first part. Then assume that $0\neq V_{\varphi_1}\xi=V_{\varphi_2}\psi$ for $\xi,\psi \in L^2(\Rd)$. It follows by Lemma \ref{lem:rankonecase} that \begin{equation*} 
  \xi \otimes \varphi_1=\psi \otimes \varphi_2,
\end{equation*}
  as $\F_W$ is a bijection from $\HS$ to $L^2(\Rdd)$. Taking adjoints, we get 
 \begin{equation} \label{eq:proofgaborspaces}
  \varphi_1 \otimes \xi=\varphi_2 \otimes \psi.
\end{equation}
 
 If we apply \eqref{eq:proofgaborspaces} to $\xi$, we obtain $$\varphi_1=\frac{\inner{\xi}{\psi}_{L^2}}{\|\xi\|_{L^2}^2}\varphi_2.$$ Note that dividing by $\|\xi\|_{L^2}^2$ is allowed, as we assumed $V_{\varphi_1}\xi \neq 0$ which by \eqref{eq:moyal} implies $\xi\neq 0.$
\end{proof}

Even though the result above shows that Gabor spaces with different windows $\varphi_1$ and $\varphi_2$ usually have trivial intersection, there is always an obvious Hilbert space isomorphism $\Psi:V_{\varphi_1}(L^2)\to V_{\varphi_2}(L^2)$ given by $\Psi=V_{\varphi_2}V_{\varphi_1}^*\vert_{V_{\varphi_1} (L^2)}.$ However, this does not preserve the reproducing kernels:  $k^{\varphi_1}_z=V_{\varphi_1}(\pi(z)\varphi_1)$ by \eqref{eq:reproducingkernelGabor}, so clearly $\Psi(k^{\varphi_1}_z)=V_{\varphi_2}(\pi(z)\varphi_1)$. By the injectivity of $V_{\varphi_2}$, the only way $\Psi(k^{\varphi_1}_z)=V_{\varphi_2}(\pi(z)\varphi_1)$ can equal $k^{\varphi_2}_z=V_{\varphi_2}(\pi(z)\varphi_2)$ is if $\varphi_1=\varphi_2$.

If we use Proposition \ref{prop:locoptoeplitz} and Lemma \ref{lem:berezingabor} to translate parts of Theorem \ref{thm:wernerapproximation} into a result on Toeplitz operators, we clearly see that the properties of the window $\varphi$ must be taken into account when studying Toeplitz operators on $V_\varphi(L^2).$

\begin{prop} \label{prop:wernerapproxGabor}
	Let $\varphi\in L^2(\Rd)$ with $\|\varphi\|_{L^2}=1$. The following are equivalent.
	\begin{enumerate}
		\item $V_{\varphi}\varphi$ has no zeros. 
		\item The Berezin transform $\mathfrak{B}^\varphi$ is injective on $\mathcal{L}(V_{\varphi}(L^2)).$
		\item The map $f\mapsto T^\varphi_f$ is injective from $L^\infty(\Rdd)$ to $\mathcal{L}(V_{\varphi}(L^2)) .$
	\end{enumerate}
\end{prop}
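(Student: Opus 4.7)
The plan is to reduce all three conditions to a single statement about the trace class operator $\varphi\otimes\varphi$ belonging to $\mathcal{W}$, using Werner's approximation theorem (Theorem \ref{thm:wernerapproximation}) together with the identifications from Lemma \ref{lem:rankonecase}, Proposition \ref{prop:locoptoeplitz}, and Lemma \ref{lem:berezingabor}.

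First I would observe, as an immediate consequence of Lemma \ref{lem:rankonecase}(1), that
\begin{equation*}
  \F_W(\varphi\otimes\varphi)(x,\omega)=e^{i\pi x\cdot\omega}V_\varphi\varphi(x,\omega),
\end{equation*}
so condition (1) is exactly the statement that $\varphi\otimes\varphi\in\mathcal{W}$. Next, for condition (3), I would use that $\Theta^\varphi$ is a linear isomorphism, so $f\mapsto T_f^\varphi$ is injective if and only if $f\mapsto \Theta^\varphi(T_f^\varphi)=\mathcal{A}_f^{\varphi,\varphi}=f\star(\varphi\otimes\varphi)$ is injective, by Proposition \ref{prop:locoptoeplitz} and \eqref{eq:locopsasconv}. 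The equivalence of this with $\varphi\otimes\varphi\in\mathcal{W}$ is precisely clause (2)$\Leftrightarrow$(6) of Theorem \ref{thm:wernerapproximation}.

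For condition (2), I would use Lemma \ref{lem:berezingabor} to rewrite $\mathfrak{B}^\varphi\tilde T=\Theta^\varphi(\tilde T)\star(\check\varphi\otimes\check\varphi)$; since $\Theta^\varphi$ is an isomorphism, injectivity of $\mathfrak{B}^\varphi$ on $\mathcal{L}(V_\varphi(L^2))$ is equivalent to injectivity of $R\mapsto R\star(\check\varphi\otimes\check\varphi)$ from $\bo$ to $L^\infty(\Rdd)$, which by clause (2)$\Leftrightarrow$(4) of Theorem \ref{thm:wernerapproximation} is the statement $\check\varphi\otimes\check\varphi\in\mathcal{W}$. The one remaining step is therefore to verify that $\check\varphi\otimes\check\varphi\in\mathcal{W}$ if and only if $\varphi\otimes\varphi\in\mathcal{W}$. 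A short substitution $t\mapsto -t$ in the defining integral for the short-time Fourier transform yields
\begin{equation*}
  V_{\check\varphi}\check\varphi(x,\omega)=e^{-2\pi i x\cdot\omega}\,\overline{V_\varphi\varphi(x,\omega)},
\end{equation*}
so these two functions have the same zero set and the equivalence follows by again applying Lemma \ref{lem:rankonecase}(1).

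I do not expect a genuine obstacle here; the proof is essentially bookkeeping, translating the three conditions into the language of convolutions of rank-one operators and invoking Theorem \ref{thm:wernerapproximation}. The one point that requires a small computation, rather than direct quotation, is the passage between $\varphi$ and $\check\varphi$ that connects condition (2) to condition (1); handling this symmetry cleanly is the only place care is needed.
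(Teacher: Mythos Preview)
Your proposal is correct and follows essentially the same route as the paper: reduce each of (1)--(3) to the condition $\varphi\otimes\varphi\in\mathcal{W}$ and invoke Theorem \ref{thm:wernerapproximation}. The only cosmetic difference is in handling the parity in (2): the paper observes that $T\star\check{S}(z)=\check{T}\star S(-z)$ to pass from $\check{S}$ to $S$, whereas you compute $V_{\check\varphi}\check\varphi$ directly to see that $\check\varphi\otimes\check\varphi\in\mathcal{W}\Leftrightarrow\varphi\otimes\varphi\in\mathcal{W}$; both are one-line calculations.
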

\begin{proof}
The result will follow from Theorem \ref{thm:wernerapproximation} once we have shown that each statement is equivalent to a statement in that theorem with $S=\varphi\otimes \varphi$. As $\F_W(S)(x,\omega)=e^{i\pi x\cdot \omega}V_{\varphi}\varphi(x,\omega)$ by Lemma \ref{lem:rankonecase}, $(1)$ states that $S\in \mathcal{W}$. Since Proposition \ref{prop:locoptoeplitz} gives that $T_f^\varphi$ is unitarily equivalent with $\mathcal{A}_f^{\varphi,\varphi}=f\star S$, the map $f\mapsto T^\varphi_f$ is injective if and only if the map $f\mapsto f\star S$ is injective. Similarly, since 	Lemma \ref{lem:berezingabor} gives that
	 \begin{equation*}
	\mathfrak{B}^\varphi \tilde{T}(z)=\Theta^\varphi(\tilde{T})\star \check{S}
	\end{equation*}
	and $\Theta^\varphi:\mathcal{L}(V_\varphi(L^2))\to \bo$ is a bijection, we get that $\mathfrak{B}^\varphi$ is injective if and only if $T\mapsto T\star \check{S}$ is injective. It is simple to check that the last condition is equivalent to $T\mapsto T\star S$  being injective, as a calculation shows that $T\star \check{S}(z)=\check{T}\star S(-z).$  
	\end{proof}
	\begin{rem}
		The other parts of Theorem \ref{thm:wernerapproximation} could also be translated into statements on $V_{\varphi}(L^2)$, and one could obtain other equivalences by imposing weaker requirements on the set of zeros of $V_{\varphi}\varphi$, see \cite{Kiukas:2012,Luef:2018c}.
	\end{rem}
\subsection{Toeplitz operators on Bargmann-Fock space}

For the Gaussian $\varphi_0$, the Gabor space $V_{\varphi_0}(L^2)$ is closely related to another much-studied reproducing kernel Hilbert space: the Bargmann-Fock space $\mathcal{F}^2(\C^d)$, consisting of all analytic functions $F$ on $\mathbb{C}^d$ such that $ \|F\|_{\mathcal{F}^2}<\infty$, where $ \|F\|_{\mathcal{F}^2}$ is the norm induced by the inner product
\begin{equation*}
  \inner{F}{G}_{\mathcal{F}^2}=\int_{\C^d} F(z) \overline{G(z)} e^{-\pi |z|^2} \ dz .
\end{equation*}
  
 An important tool in the study of $\mathcal{F}^2(\C^d)$ is the \textit{Bargmann transform}, which is the unitary mapping $\mathcal{B}:L^2(\Rd)\to \mathcal{F}^2(\mathbb{C}^d)$ defined by
 \begin{equation} \label{eq:bargmannstft}
  \mathcal{B}=\mathcal{A}\circ V_{\varphi_0},
\end{equation}
where $\mathcal{A}:L^2(\Rdd)\to L^2(\mathbb{C}^d,e^{-\pi |z|^2}dz)$ is a unitary operator given by 
\begin{equation*}
  \mathcal{A}(f)(x+i\omega)=e^{-\pi i x \cdot \omega} e^{\frac{\pi}{2} |z|^2} f(x,-\omega)\quad \text{ for } z=(x,\omega) \in \Rdd.
\end{equation*}
The restriction $\mathcal{A}\vert_{V_{\varphi_0}(L^2)}$ is unitary from $V_{\varphi_0}(L^2)$ to $\mathcal{F}^2(\C^d)$, as it may be written as the composition $\mathcal{B}\circ V_{\varphi_0}^*\vert_{V_{\varphi_0}(L^2)}$ of unitary operators. Hence $\mathcal{A}$ allows us to relate the spaces $V_{\varphi_0} (L^2)$ and $\mathcal{F}^2(\C^d).$ 

The orthogonal projection from $L^2(\mathbb{C}^d,e^{-\pi |z|^2}dz)$ to $\mathcal{F}^2(\mathbb{C}^d)$ is given by 
\begin{equation} \label{eq:projectionBF}
  \mathcal{P}_{\mathcal{F}^2} = \mathcal{B}\mathcal{B}^* = \mathcal{A}V_{\varphi_0} V_{\varphi_0}^* \mathcal{A}^*=\mathcal{A}\mathcal{P}_{V_{\varphi_0}(L^2)}\mathcal{A}^*,
\end{equation}
and the non-normalized reproducing kernel of $\mathcal{F}^2(\mathbb{C}^d)$ is $$K_{z}(z')=e^{\pi \overline{z}\cdot z'}\quad \text{ for } z,z'\in \mathbb{C}^d.$$ For our purposes it is convenient to note that we can use the reproducing kernel $k^{\varphi_0}_{(x,\omega)}$ for $V_{\varphi_0}(L^2)$ to express $K_{z}$ for $z=x+i\omega$ by
\begin{equation} \label{eq:reproducingkernelfock}
  K_{z}(x'+i\omega')=e^{i\pi x\cdot \omega}e^{\pi |z|^2/2} \left[\mathcal{A}k^{\varphi_0}_{(x,-\omega)}\right](x'+i\omega'),
\end{equation}
as follows from the calculation 
\begin{align*}
  \inner{\mathcal{B}(\psi)}{e^{i\pi x\cdot \omega}e^{\pi |z|^2/2} \mathcal{A}k^{\varphi_0}_{(x,-\omega)}}_{\mathcal{F}^2}&=e^{-\pi i x\cdot \omega}e^{\pi |z|^2/2}\inner{\mathcal{A}V_{\varphi_0}\psi}{\mathcal{A}k^{\varphi_0}_{(x,-\omega)}}_{\mathcal{F}^2} \\
  &=e^{-\pi i x\cdot \omega}e^{\pi |z|^2/2}\inner{V_{\varphi_0}\psi}{k^{\varphi_0}_{(x,-\omega)}}_{L^2(\Rdd)} \\
  &= e^{-\pi i x\cdot \omega}e^{\pi |z|^2/2} V_{\varphi_0}\psi(x,-\omega)\\
  &=\mathcal{B}(\psi)(x+i\omega).
\end{align*}

For every $F\in L^\infty(\mathbb{C}^d)$ one defines the \textit{Bargmann-Fock Toeplitz operator} $T^{\mathcal{F}^2}_F$ on $\mathcal{F}^2(\mathbb{C}^d)$ by
\begin{equation*}
  T^{\mathcal{F}^2}_F (H) = \mathcal{P}_{\mathcal{F}^2} (F\cdot H)
\end{equation*}
for any $H \in \mathcal{F}^2(\mathbb{C}^d)$. Using \eqref{eq:projectionBF} and the unitarity of $\mathcal{A}$, one can calculate that if $f\in L^\infty(\Rdd)$ and $F\in L^\infty(\C^d)$ are related by 
\begin{equation} \label{eq:complexreal}
  F(x+i\omega)=f(x,-\omega) \quad \text{ for } x,\omega \in \Rdd,
\end{equation}
then 
\begin{equation} \label{eq:toeplitzgaborfock}
  T_f^{\varphi_0}=\mathcal{A}^* T_F^{\mathcal{F}^2} \mathcal{A}.
\end{equation}

 In combination with Proposition \ref{prop:locoptoeplitz} this gives the following result.

\begin{prop}\label{prop:unitarilyequivalent}
	Let $f\in L^\infty(\Rdd)$ and $F\in L^\infty(\mathbb{C}^d)$ be related by \eqref{eq:complexreal}. Then the following operators are unitarily equivalent.
	
	\begin{enumerate}
		\item The localization operator $\mathcal{A}^{\varphi_0,\varphi_0}_f:L^2(\Rd)\to L^2(\Rd)$.
		\item The Gabor Toeplitz operator $T_f^{\varphi_0}:V_{\varphi_0}(L^2) \to V_{\varphi_0}(L^2)$.
		\item The Bargmann-Fock Toeplitz operator $T_F^{\mathcal{F}^2}:\mathcal{F}^2(\mathbb{C}^d)\to \mathcal{F}^2(\mathbb{C}^d).$
	\end{enumerate}
\end{prop}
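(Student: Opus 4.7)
The plan is to establish the two unitary equivalences $(1)\sim (2)$ and $(2)\sim (3)$ separately; combining them will then give that all three operators are unitarily equivalent. For $(1)\sim(2)$ there is essentially nothing to do: this is exactly the content of Proposition \ref{prop:locoptoeplitz} applied with $\varphi=\varphi_0$, which identifies $\mathcal{A}^{\varphi_0,\varphi_0}_f$ with $T^{\varphi_0}_f$ via the unitary $V_{\varphi_0}:L^2(\Rd)\to V_{\varphi_0}(L^2)$.

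For $(2)\sim (3)$ I would verify formula \eqref{eq:toeplitzgaborfock}, namely $T^{\varphi_0}_f=\mathcal{A}^*T^{\mathcal{F}^2}_F\mathcal{A}$, with $\mathcal{A}|_{V_{\varphi_0}(L^2)}:V_{\varphi_0}(L^2)\to \mathcal{F}^2(\C^d)$ providing the unitary implementing the equivalence. The key observation is that the operator $\mathcal{A}$, being multiplication by the nowhere-vanishing factor $e^{-\pi i x\cdot \omega}e^{\pi|z|^2/2}$ composed with the coordinate reflection $\omega\mapsto -\omega$, intertwines multiplication by $F$ on $\C^d$ with multiplication by $f$ on $\Rdd$. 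Concretely, the defining formula for $\mathcal{A}$ together with the relation \eqref{eq:complexreal} gives, for any measurable $h$ on $\Rdd$,
\begin{equation*}
F\cdot \mathcal{A}(h)=\mathcal{A}(f\cdot h),
\end{equation*}
by a direct pointwise computation.

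Granting this intertwining identity, I would complete the proof of $(2)\sim (3)$ as follows. For $\psi\in L^2(\Rd)$, set $H=\mathcal{A}(V_{\varphi_0}\psi)\in \mathcal{F}^2(\C^d)$. Using \eqref{eq:projectionBF}, which gives $\mathcal{P}_{\mathcal{F}^2}=\mathcal{A}\mathcal{P}_{V_{\varphi_0}(L^2)}\mathcal{A}^*$, and the unitarity $\mathcal{A}^*\mathcal{A}=I_{L^2(\Rdd)}$, one obtains
\begin{align*}
\mathcal{A}^*T^{\mathcal{F}^2}_F \mathcal{A}(V_{\varphi_0}\psi)&=\mathcal{A}^*\mathcal{P}_{\mathcal{F}^2}\bigl(F\cdot \mathcal{A}(V_{\varphi_0}\psi)\bigr)\\
&=\mathcal{P}_{V_{\varphi_0}(L^2)}\mathcal{A}^*\mathcal{A}\bigl(f\cdot V_{\varphi_0}\psi\bigr)\\
&=\mathcal{P}_{V_{\varphi_0}(L^2)}\bigl(f\cdot V_{\varphi_0}\psi\bigr)\\
&=T^{\varphi_0}_f(V_{\varphi_0}\psi),
\end{align*}
which is \eqref{eq:toeplitzgaborfock}. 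Since $V_{\varphi_0}(L^2)$ is precisely the domain of definition of $T^{\varphi_0}_f$, this identity holds on the whole domain and the unitary equivalence is established.

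I do not anticipate any serious obstacle here. The only point that requires care is the sign flip in the second coordinate: it appears both in the definition of $\mathcal{A}$ and in the relation $F(x+i\omega)=f(x,-\omega)$, and these flips must cancel correctly for the intertwining identity $F\cdot \mathcal{A}(h)=\mathcal{A}(f\cdot h)$ to hold. Everything else is bookkeeping with two unitary maps $V_{\varphi_0}$ and $\mathcal{A}|_{V_{\varphi_0}(L^2)}$, whose composition implements the unitary equivalence between $(1)$ and $(3)$ directly.
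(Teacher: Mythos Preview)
Your proposal is correct and follows essentially the same approach as the paper: $(1)\sim(2)$ via Proposition~\ref{prop:locoptoeplitz}, and $(2)\sim(3)$ via the identity \eqref{eq:toeplitzgaborfock}, which the paper derives ``using \eqref{eq:projectionBF} and the unitarity of $\mathcal{A}$'' without writing out the details. You have simply supplied the explicit intertwining calculation $F\cdot \mathcal{A}(h)=\mathcal{A}(f\cdot h)$ and the chain of equalities that the paper leaves to the reader.
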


\begin{rem}
	The simple result above is far from new, going back to at least \cite{Coburn:2000}. A related and more complicated question that appears in the literature is to relate $\mathcal{A}^{\varphi,\varphi}_f$, where $\varphi$ needs no longer be Gaussian, to a Bargmann-Fock Toeplitz operator $T_{(I+D)F}^{\mathcal{F}^2}$,  where $D$ is some differential operator \cite{Abreu:2015,Coburn:2000,Englis:2009}. 
\end{rem}

The Berezin transform can also be defined on $\mathcal{F}^2(\mathbb{C}^d).$ Since $\mathcal{A}:V_\varphi (L^2)\to \mathcal{F}^2(\C^d)$ is unitary, one easily checks using \eqref{eq:reproducingkernelfock} that the \textit{normalized} reproducing kernel $\tilde{k}_{z}$ on $\mathcal{F}^2(\mathbb{C}^d)$ is $$\tilde{k}_{z}(z')=e^{i\pi x\cdot \omega} \left[\mathcal{A}k_{(x,-\omega)}^{\varphi_0}\right](x'+i\omega')\quad \text{ for } z=x+i\omega, z'=x'+i\omega'.$$ 

This implies the following result on the Berezin transform $\mathfrak{B}^{\mathcal{F}^2}$ on $\mathcal{F}^2(\mathbb{C}^d)$. 
\begin{lem}\label{lem:berezinfock} 
Let $\tilde{T}\in \mathcal{L}(\mathcal{F}^2(\mathbb{C}^d))$. Then 
\begin{align*}
\mathfrak{B}^{\mathcal{F}^2}\tilde{T}(x+i\omega)&=\mathfrak{B}^{\varphi_0}[\mathcal{A}^*\tilde{T}\mathcal{A}](x,-\omega)\\
&=(\mathcal{B}^*\tilde{T}\mathcal{B})\star (\varphi_0\otimes \varphi_0)(x,-\omega).  
\end{align*}
In particular, if $F\in L^\infty(\mathbb{C}^d)$, then
	\begin{equation*}
  \mathfrak{B}^{\mathcal{F}^2}T^{\mathcal{F}^2}_F(x+i\omega)=\left(f\ast |V_{\varphi_0} \varphi_0|^2 \right)(x,-\omega),
\end{equation*}
 where $f\in L^\infty(\Rdd)$ is given by $f(x,\omega)=F(x-i\omega)$ and $|V_{\varphi_0} \varphi_0(z)|^2=e^{-\pi |z|^2}.$
\end{lem}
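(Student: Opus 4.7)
The plan is to unwind the definition of $\mathfrak{B}^{\mathcal{F}^2}$ using the formula for the normalized reproducing kernel given just before the lemma statement, transport everything back to the Gabor space $V_{\varphi_0}(L^2)$ via the unitary $\mathcal{A}$, and then invoke Lemma \ref{lem:berezingabor}. Both equalities will drop out of this, and the concrete formula for Toeplitz operators will follow from \eqref{eq:toeplitzgaborfock}.

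For the first equality, write $z = x+i\omega$ so that $\tilde{k}_z = e^{i\pi x\cdot\omega}\mathcal{A}k^{\varphi_0}_{(x,-\omega)}$ by \eqref{eq:reproducingkernelfock}. Substituting this into $\mathfrak{B}^{\mathcal{F}^2}\tilde{T}(z) = \inner{\tilde{T}\tilde{k}_z}{\tilde{k}_z}_{\mathcal{F}^2}$, the scalar phase factor $e^{i\pi x\cdot\omega}$ cancels with its conjugate. Then, since $\mathcal{A}|_{V_{\varphi_0}(L^2)}:V_{\varphi_0}(L^2)\to\mathcal{F}^2(\C^d)$ is unitary, the adjoint moves across the inner product to give
\begin{equation*}
  \mathfrak{B}^{\mathcal{F}^2}\tilde{T}(x+i\omega) = \inner{\mathcal{A}^*\tilde{T}\mathcal{A}\,k^{\varphi_0}_{(x,-\omega)}}{k^{\varphi_0}_{(x,-\omega)}}_{L^2(\Rdd)},
\end{equation*}
which is precisely $\mathfrak{B}^{\varphi_0}[\mathcal{A}^*\tilde{T}\mathcal{A}](x,-\omega)$.

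For the second equality, apply Lemma \ref{lem:berezingabor} with $\varphi=\varphi_0$ and operator $\mathcal{A}^*\tilde{T}\mathcal{A}\in\mathcal{L}(V_{\varphi_0}(L^2))$. This yields $\mathfrak{B}^{\varphi_0}[\mathcal{A}^*\tilde{T}\mathcal{A}] = \Theta^{\varphi_0}(\mathcal{A}^*\tilde{T}\mathcal{A})\star (\check{\varphi_0}\otimes\check{\varphi_0})$. Using \eqref{eq:theta} and \eqref{eq:bargmannstft} one computes $\Theta^{\varphi_0}(\mathcal{A}^*\tilde{T}\mathcal{A}) = V_{\varphi_0}^*\mathcal{A}^*\tilde{T}\mathcal{A}V_{\varphi_0} = \mathcal{B}^*\tilde{T}\mathcal{B}$, and since the Gaussian is even we have $\check{\varphi_0}=\varphi_0$. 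This gives the claimed form $(\mathcal{B}^*\tilde{T}\mathcal{B})\star(\varphi_0\otimes\varphi_0)(x,-\omega)$.

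Finally, for the ``in particular'' statement with $\tilde{T}=T_F^{\mathcal{F}^2}$, note that $F(x+i\omega) = f(x,-\omega)$ is equivalent to $f(x,\omega)=F(x-i\omega)$, so by \eqref{eq:toeplitzgaborfock} we have $\mathcal{A}^*T_F^{\mathcal{F}^2}\mathcal{A} = T_f^{\varphi_0}$. Combining the first equality of the lemma with the Toeplitz case of Lemma \ref{lem:berezingabor} gives
\begin{equation*}
  \mathfrak{B}^{\mathcal{F}^2}T_F^{\mathcal{F}^2}(x+i\omega) = \mathfrak{B}^{\varphi_0}T_f^{\varphi_0}(x,-\omega) = \bigl(f\ast|V_{\varphi_0}\varphi_0|^2\bigr)(x,-\omega),
\end{equation*}
and $|V_{\varphi_0}\varphi_0(z)|^2 = e^{-\pi|z|^2}$ is immediate from \eqref{eq:stftgauss}. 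The proof is essentially bookkeeping; the only mild subtlety is keeping track of the sign flip in the second coordinate introduced by the unitary $\mathcal{A}$, which is responsible for both the evaluation at $(x,-\omega)$ and the relation $f(x,\omega)=F(x-i\omega)$.
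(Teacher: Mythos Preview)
Your proof is correct and follows essentially the same route as the paper's: expand the Berezin transform via the normalized reproducing kernel, push $\mathcal{A}$ across by unitarity, and then invoke Lemma~\ref{lem:berezingabor}. The only cosmetic difference is that you make the use of $\check{\varphi_0}=\varphi_0$ explicit (the paper leaves this implicit), and note that the normalized kernel formula you use is the one displayed just before the lemma rather than \eqref{eq:reproducingkernelfock} itself, which gives the non-normalized $K_z$.
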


\begin{proof}
	By definition, 
	\begin{align*}
  \mathfrak{B}^{\mathcal{F}^2}\tilde{T}(x+i\omega)&=\inner{\tilde{T}\tilde{k}_{x+i\omega}}{\tilde{k}_{x+i\omega}}_{\mathcal{F}^2} \\
  &= \inner{\tilde{T}\mathcal{A}k_{(x,-\omega)}^{\varphi_0}}{\mathcal{A}k_{(x,-\omega)}^{\varphi_0}}_{\mathcal{F}^2} \\
  &= \inner{\mathcal{A}^*\tilde{T}\mathcal{A}k_{(x,-\omega)}^{\varphi_0}}{k_{(x,-\omega)}^{\varphi_0}}_{L^2(\Rdd)}\\
  &=\mathfrak{B}^{\varphi_0}[\mathcal{A}^*\tilde{T}\mathcal{A}](x,-\omega).
\end{align*}
That this last expression equals $(\mathcal{B}^*\tilde{T}\mathcal{B})\star (\varphi_0\otimes \varphi_0)(x,-\omega)$ follows from Lemma \ref{lem:berezingabor}, since $\mathcal{B}^*\tilde{T}\mathcal{B}=V_{\varphi_0}^* [\mathcal{A}^* \tilde{T}\mathcal{A}]V_{\varphi_0}$. For the formula for Toeplitz operators, combine the first part with \eqref{eq:toeplitzgaborfock} and the final part of Lemma \ref{lem:berezingabor}.
\end{proof}

The results above show the intimate connection between $\mathcal{F}^2(\C^d)$ and the Gabor space $V_{\varphi_0}(L^2).$ Many of the results known for $\mathcal{F}^2(\C^d)$ can easily be translated into results for $V_{\varphi_0}(L^2)$, and we will later investigate certain conditions on $\varphi$ that allow us to generalize these results to other Gabor spaces $V_\varphi(L^2)$.

\subsection{Polyanalytic Bargmann-Fock spaces}
By \eqref{eq:bargmannstft}, we may identify $V_{\varphi_0}(L^2)$ and the Bargmann-Fock space by the operator $\mathcal{A}:L^2(\Rdd) \to L^2(\C^d,e^{-\pi |z|^2}dz)$. If the Gaussian $\varphi_0$ is replaced by another Hermite function $\varphi_n$ for $n\in \N^d$, and we define the \textit{polyanalytic Bargmann transform} $\mathcal{B}_n:L^2(\Rd) \to L^2(\mathbb{C}^d,e^{-\pi |z|^2}dz)$ by $$\mathcal{B}_n = \mathcal{A}\circ  V_{\varphi_n},$$ then the image of $\mathcal{B}_n$, which we denote by $\mathcal{F}^2_n$, is again a reproducing kernel Hilbert space with reproducing kernel $K^{\varphi_n}_{z}$ for $z=x+i\omega$ given by
\begin{equation*}% \label{eq:reproducingkernelpoly}
  K^{\varphi_n}_{z}(x'+i\omega')=e^{i\pi x\cdot \omega}e^{\pi |z|^2/2} \left[\mathcal{A}k^{\varphi_n}_{(x,-\omega)}\right](x'+i\omega').
\end{equation*}

Unlike the Bargmann-Fock space $\mathcal{F}^2=\mathcal{F}^2_0$, $\mathcal{F}^2_n$ does not in general consist of analytic functions, but rather of so-called polyanalytic functions. For this reason $\mathcal{F}^2_n$ is sometimes called the \textit{true polyanalytic Fock space of degree $n$} \cite{Abreu:2010,Abreu:2014,Balk:1991}. Following \cite{Keller:2019,Rozenblum:2019} we define, given $F\in L^\infty(\C^d)$, the \textit{polyanalytic Toeplitz operator} $T_F^{\mathcal{F}^2_n}:\mathcal{F}^2_n \to \mathcal{F}^2_n$ by $$T_F^{\mathcal{F}^2_n}(H) = \mathcal{P}_{\mathcal{F}^2_n} (F\cdot H)$$ for $H \in \mathcal{F}^2_n$. Similarly to Bargmann-Fock space the orthogonal projection $\mathcal{P}_{\mathcal{F}^2_n}$ from $L^2(\mathbb{C}^d,e^{-\pi |z|^2}dz)$ to $\mathcal{F}^2_n$ is given by $$\mathcal{P}_{\mathcal{F}^2_n}=\mathcal{B}\mathcal{B}^* = \mathcal{A}V_{\varphi_n} V_{\varphi_n}^* \mathcal{A}^*.$$ If $f\in L^\infty(\Rdd)$ and $F\in L^\infty(\C^d)$ are related as in \eqref{eq:complexreal}, one can show that $T_f^{\varphi_n}=\mathcal{A}^*T_F^{\mathcal{F}^2_n}\mathcal{A}$. Hence we obtain the following result. 

\begin{prop} \label{prop:uneqpolyana}
	Let $f\in L^\infty(\Rdd)$ and $F\in \C^d$ be related as in \eqref{eq:complexreal}. For $n\in \N^d$, the following operators are unitarily equivalent.
	
	\begin{enumerate}
		\item The localization operator $\mathcal{A}^{\varphi_n,\varphi_n}_f:L^2(\Rd)\to L^2(\Rd)$.
		\item The Gabor Toeplitz operator $T_f^{\varphi_n}:V_{\varphi_n}L^2 \to V_{\varphi_n}L^2$.
		\item The polyanalytic Toeplitz operator $T_F^{\mathcal{F}^2_n}:\mathcal{F}^2_n(\mathbb{C}^d)\to \mathcal{F}^2_n(\mathbb{C}^d).$
	\end{enumerate}
\end{prop}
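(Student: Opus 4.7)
The plan is to mirror the proof of Proposition \ref{prop:unitarilyequivalent} with $\varphi_0$ replaced by the Hermite function $\varphi_n$; the argument is essentially formal once two intertwining identities are in hand.

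First, the equivalence of (1) and (2) is an immediate application of Proposition \ref{prop:locoptoeplitz} with $\varphi = \varphi_n$, which gives $\mathcal{A}^{\varphi_n,\varphi_n}_f = \Theta^{\varphi_n}(T_f^{\varphi_n})$. Thus the unitary $V_{\varphi_n}: L^2(\Rd) \to V_{\varphi_n}(L^2)$ implements the equivalence via the formula \eqref{eq:theta}. Second, to connect (2) and (3), I would observe that $\mathcal{A}|_{V_{\varphi_n}(L^2)}: V_{\varphi_n}(L^2) \to \mathcal{F}^2_n$ is unitary: by definition $\mathcal{F}^2_n = \mathcal{B}_n(L^2(\Rd)) = \mathcal{A}(V_{\varphi_n}(L^2))$, and both $V_{\varphi_n}: L^2(\Rd) \to V_{\varphi_n}(L^2)$ and $\mathcal{A}: L^2(\Rdd) \to L^2(\C^d, e^{-\pi|z|^2}\,dz)$ are unitaries onto their images, so the restriction is unitary as the composition $\mathcal{B}_n \circ V_{\varphi_n}^*|_{V_{\varphi_n}(L^2)}$.

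With this unitary identified, the remaining task is to verify the intertwining $T_f^{\varphi_n} = \mathcal{A}^* T_F^{\mathcal{F}^2_n} \mathcal{A}$, exactly as in \eqref{eq:toeplitzgaborfock}. The key step is the pointwise identity $F \cdot \mathcal{A}(g) = \mathcal{A}(f \cdot g)$ for $g \in L^2(\Rdd)$, which reads off directly from the explicit formula defining $\mathcal{A}$ together with the coordinate relation $F(x+i\omega) = f(x,-\omega)$ from \eqref{eq:complexreal}: the exponential factors in $\mathcal{A}$ depend only on $z$, not on the function being transformed, so multiplication by $F$ on the complex side corresponds precisely to multiplication by $f$ on the real side. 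Using this identity and the formula $\mathcal{P}_{\mathcal{F}^2_n} = \mathcal{A} V_{\varphi_n} V_{\varphi_n}^* \mathcal{A}^*$, applying the right-hand side to $V_{\varphi_n}\psi$ and simplifying with $\mathcal{A}^*\mathcal{A} = I$ collapses it to
\[
V_{\varphi_n} V_{\varphi_n}^*(f \cdot V_{\varphi_n}\psi) = \mathcal{P}_{V_{\varphi_n}(L^2)}(f \cdot V_{\varphi_n}\psi) = T_f^{\varphi_n}(V_{\varphi_n}\psi),
\]
which is the desired identity.

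There is no genuine obstacle here; the "hard part" is merely bookkeeping. One must carry the sign conventions and the coordinate flip $(x,\omega) \mapsto (x,-\omega)$ consistently through each application of $\mathcal{A}$, and one must be a little careful that the definitions of $\mathcal{B}_n$, $\mathcal{F}^2_n$, and $\mathcal{P}_{\mathcal{F}^2_n}$ extend verbatim from the $n=0$ case treated earlier without using analyticity. Once the multiplicative intertwining $F \cdot \mathcal{A}(g) = \mathcal{A}(f \cdot g)$ is recorded, the rest of the argument is a purely algebraic composition of unitary equivalences and the conclusion follows.
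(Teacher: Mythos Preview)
Your proposal is correct and follows essentially the same approach as the paper: the paper does not give an explicit proof but states just before the proposition that one verifies $T_f^{\varphi_n}=\mathcal{A}^*T_F^{\mathcal{F}^2_n}\mathcal{A}$ (the analogue of \eqref{eq:toeplitzgaborfock}) using the projection formula $\mathcal{P}_{\mathcal{F}^2_n}=\mathcal{A}V_{\varphi_n}V_{\varphi_n}^*\mathcal{A}^*$, while the equivalence of (1) and (2) is Proposition~\ref{prop:locoptoeplitz}. Your write-up simply fills in the bookkeeping the paper leaves to the reader, including the multiplicative intertwining $F\cdot\mathcal{A}(g)=\mathcal{A}(f\cdot g)$, and contains no gaps.
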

We have related polyanalytic Toeplitz operators to Gabor Toeplitz operators on $V_{\varphi_n}(L^2).$ By \cite[(4.16)]{Janssen:1997}, $V_{\varphi_n}\varphi_n$ has zeros if and only if $n\neq 0$. An easy argument using the previous proposition then translates Proposition \ref{prop:wernerapproxGabor} into the following statement. A version of this is also discussed with different tools in \cite[Sec. 5.1.2]{Rozenblum:2019}.

\begin{prop}
	Let $n\in \mathbb{N}^d$. The map $F\mapsto T^{\mathcal{F}^2_n}_F$ is injective from $L^\infty(\mathbb{C}^d)$ if and only if $n=0$. In other words, assigning a bounded function to a Toeplitz operator is only injective on the Bargmann-Fock space. 
\end{prop}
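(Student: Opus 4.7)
The plan is to reduce the statement to the corresponding question about Gabor Toeplitz operators and apply results already established in the excerpt. The whole argument is essentially a chain of three equivalences, so no real obstacle is anticipated beyond bookkeeping.

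First, I would observe that the map
\begin{equation*}
  L^\infty(\Rdd) \ni f \longmapsto F \in L^\infty(\mathbb{C}^d), \qquad F(x+i\omega) := f(x,-\omega),
\end{equation*}
is a linear bijection. By Proposition \ref{prop:uneqpolyana} the polyanalytic Toeplitz operator $T_F^{\mathcal{F}^2_n}$ is unitarily equivalent to the Gabor Toeplitz operator $T_f^{\varphi_n}$. Hence $F \mapsto T_F^{\mathcal{F}^2_n}$ is injective on $L^\infty(\mathbb{C}^d)$ if and only if $f \mapsto T_f^{\varphi_n}$ is injective on $L^\infty(\Rdd)$.

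Next, I would apply Proposition \ref{prop:wernerapproxGabor} with $\varphi = \varphi_n$ (note $\|\varphi_n\|_{L^2} = 1$). The equivalence $(1) \iff (3)$ there says $f \mapsto T_f^{\varphi_n}$ is injective precisely when $V_{\varphi_n}\varphi_n$ has no zeros.

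Finally, I would invoke the cited result \cite[(4.16)]{Janssen:1997}, which states that $V_{\varphi_n}\varphi_n$ has zeros if and only if $n \neq 0$. Chaining the three equivalences yields that $F \mapsto T_F^{\mathcal{F}^2_n}$ is injective if and only if $n = 0$, which is the claim.
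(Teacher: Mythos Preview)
Your proposal is correct and follows essentially the same route as the paper, which merely says that ``an easy argument using the previous proposition then translates Proposition \ref{prop:wernerapproxGabor}'' into the statement; you have simply written out that easy argument in full, using Proposition \ref{prop:uneqpolyana}, Proposition \ref{prop:wernerapproxGabor}, and the cited fact \cite[(4.16)]{Janssen:1997} exactly as intended.
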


\section{A Tauberian theorem for bounded functions} %\label{sec:tauberian1}

As our first main result we present a generalization of Wiener's classical Tauberian theorem that applies to bounded functions and convolutions with integrable functions and trace class operators. The key tool is Werner's generalization of Wiener's approximation theorem from Theorem \ref{thm:wernerapproximation}.

\tauberianfunction

\begin{proof}
We start by proving that $(i)$ and $(ii)$ are equivalent. Assume $(i)$, and consider $a=S\star S\in L^1(\Rdd)$. Since $\F_\sigma (S\star S)(z)=\F_W(S)(z)^2$ for any $z\in \Rdd$ by \eqref{eq:fsigmaconv}, we obtain both that $\F_\sigma(a)$ has no zeros and (by evaluating the relation at $z=0$) that $$\int_{\Rdd} a(z) \ dz = \tr(S)\cdot \tr(S).$$ Then observe using associativity of the convolutions that
\begin{align*}
  f\ast a &= f\ast (S\star S) \\
  &= (f\star S)\star S \\
  &= (A\cdot\tr(S) \cdot I_{L^2} +K )\star S \\
  &= A\cdot \tr(S)\cdot\tr(S) +K\star S \quad \text{ by Lemma \ref{lem:convolutionswithidentity}} \\
  &= A\cdot \int_{\Rdd} a(z) \ dz +K\star S,
\end{align*}
and $K\star S\in C_0(\Rdd)$ by Lemma \ref{lem:compactL0}. The proof that $(ii)$ implies $(i)$ is similar by picking $S=a\star T$, where $T\in \tco$ is any operator in $\mathcal{W}$. Then $\F_W(S)(z)=\F_\sigma(a)(z)\F_W(T)(z)$ by \eqref{eq:fwignerconv}, so $\F_W(S)$ has no zeros and $\tr(S)=\int_{\Rdd} a(z) \ dz \cdot \tr(T)$ by evaluating the relation at $z=0$. Furthermore, associativity of convolutions gives
\begin{align*}
  f\star S &= f\star (a\star T) \\
  &= (f\ast a)\star T \\
  &= \left(A\cdot\int_{\Rdd}a(z) \ dz  +h \right)\star T \\
  &= A\cdot \int_{\Rdd}a(z) \ dz \cdot  \tr(T)\cdot I_{L^2} +h\star T \quad \text{ by Lemma \ref{lem:convolutionswithidentity}} \\
  &= A\cdot \tr(S)\cdot I_{L^2} +h\star T,
\end{align*}
and $h\star T\in \compacts$ by Lemma \ref{lem:compactL0}. Hence $(i)$ and $(ii)$ are equivalent.\\
The fact that $(ii)$ implies $(2)$ is Wiener's classical Tauberian theorem. The proof will therefore be completed if we can show $(i) \implies (1)$, so assume that $S$ satisfies $(i)$, and for now assume $A=0$. In short, we assume $f\star S\in \compacts$. We need to show that $f\star T\in \compacts$ for any $T\in \tco.$ Part $(3)$ of Theorem \ref{thm:wernerapproximation} implies that $T$ is the limit in the norm of $\tco$ of a sequence $r_n\star S$ for $r_n\in L^1(\Rdd)$. By commutativity and associativity of the convolutions, 
	\begin{equation*}
  f\star (r_n\star S)= r_n\star (f\star S)\in  \compacts \quad \text{ by Lemma \ref{lem:compactL0}.}
\end{equation*}
Proposition \ref{prop:young} then gives that
\begin{align*}
  \|f\star T-f\star (r_n\star S)\|_{\bo}
  &\leq \|f\|_{L^\infty} \|T-r_n\star S\|_{\tco}\to 0 \quad \text{ as } n\to\infty. 
\end{align*}
Hence $f\star T$ is the limit in the operator norm of compact operators, thus compact. Finally, assume that $A\neq 0$. Then $(f-A)\star S\in \compacts $ by Lemma \ref{lem:convolutionswithidentity}, so the result for $A=0$ implies that $(f-A)\star T\in \compacts$ for any $T\in \tco$, and applying Lemma \ref{lem:convolutionswithidentity} again we see that this is equivalent to $(1).$
\end{proof}

The case $A=0$ is particularly interesting, as it concerns the compactness of operators of the form $f\star T$ for $T\in \tco$. We will return to this special case on several occasions.

\begin{rem}
	\begin{enumerate}
		\item Note that the convolution of a bounded and an integrable function is continuous, so we lose no generality by assuming that $h$ and $h_g$ belong to $C_0(\Rdd)$ rather than merely assuming that they belong to $L^0(\Rdd).$
		\item As already mentioned in the proof, the classical Tauberian theorem of Wiener is the implication $(ii)\implies (2).$
		\item The conditions on the Fourier transforms of $S$ in $(i)$ are  necessary to imply $(1)$ and $(2)$. To see this, assume that $S\in \tco$ satisfies $\F_W(S)(z_0)=0$ for some $z_0=(x_0,\omega_0)\in \Rdd$. Then consider the function $f_{z_0}(z)=e^{2\pi i \sigma(z_0,z)}\in L^\infty(\Rdd)$. One can show that for any $T\in \tco$ we have
		\begin{equation*}
			f_{z_0} \star T = \F_W(T)(z_0) e^{-\pi i x_0\cdot \omega_0} \pi(z_0).
		\end{equation*}
		In particular, $f_{z_0}\star S=0\in \compacts$ since $\F_W(S)(z_0)=0$, so apart from the condition on $\F_W(S)$ we see that $S$ satisfies $(i)$ with $A=0$. However, $f_{z_0}\star T=\F_W(T)(z_0) e^{-\pi i x_0\cdot \omega_0} \pi(z_0)$ is not compact if  $\F_W(T)(z_0)\neq 0$, hence $(1)$ is not true for $f_{z_0}$. A similar argument with the same functions $f_{z_0}$ shows that the condition on $a$ in $(ii)$ is also necessary.
	\end{enumerate}
\end{rem}

\subsection{A result by Fern\'andez and Galbis}
In \cite{Fernandez:2006}, Fern\'andez and Galbis proved the following result on compactness of localization operators.

\begin{thm}[Fern\'andez and Galbis] \label{thm:fernandezgalbis}
	Let $f\in L^\infty(\Rdd)$. Then $\mathcal{A}^{\varphi_1,\varphi_2}_f$ is compact for all $\varphi_1,\varphi_2\in \mathscr{S}(\Rd)$ if and only if there is a non-zero $\Phi \in \mathscr{S}(\Rdd)$ such that for every $R>0$
\begin{equation} \label{eq:fernandezgalbis}
	\lim_{|x|\to \infty} \sup_{|\omega|\leq R} |V_\Phi f(x,\omega)| =0. 
\end{equation}

\end{thm}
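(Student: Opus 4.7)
The plan is to route Theorem \ref{thm:fernandezgalbis} through the equivalence $(i)\Leftrightarrow(ii)$ of Theorem \ref{thm:tauberian1} (with $A=0$) and then translate the convolution condition $(ii)$ into the short-time Fourier transform condition \eqref{eq:fernandezgalbis}.

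First, using $\mathcal{A}^{\varphi_1,\varphi_2}_f = f \star (\varphi_2 \otimes \varphi_1)$ from \eqref{eq:locopsasconv} and the fact that $\varphi_0 \otimes \varphi_0 \in \mathcal{W}$ (Example \ref{exmp:gaussianfw}), I would observe that compactness of $\mathcal{A}^{\varphi_0,\varphi_0}_f$ is exactly condition $(i)$ of Theorem \ref{thm:tauberian1} with $A=0$ and $S=\varphi_0\otimes\varphi_0$. By conclusion $(1)$ of that theorem this in turn implies $f\star T\in\compacts$ for every $T\in\tco$, hence $\mathcal{A}^{\varphi_1,\varphi_2}_f \in \compacts$ for every pair of Schwartz windows. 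Therefore ``all Schwartz localization operators are compact'' is equivalent to condition $(i)$ and, by $(i)\Leftrightarrow(ii)$, to: there exists $a\in W(\Rdd)$ with $f\ast a\in C_0(\Rdd)$.

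Next, the bridge between this convolution condition and \eqref{eq:fernandezgalbis} is the elementary identity $|V_\Phi f(x,\omega)|=|(f\ast M_\omega\Phi^*)(x)|$, where $\Phi^*(t):=\overline{\Phi(-t)}$. For the direction $(ii)\Rightarrow\eqref{eq:fernandezgalbis}$, I would apply conclusion $(2)$ of Theorem \ref{thm:tauberian1} to obtain $f\ast g\in C_0(\Rdd)$ for \emph{every} $g\in L^1(\Rdd)$. Then for any non-zero $\Phi\in\mathscr{S}(\Rdd)$ and $R>0$, the set $\{M_\omega\Phi^*:|\omega|\leq R\}$ is norm-compact in $L^1(\Rdd)$ because $\omega\mapsto M_\omega\Phi^*$ is norm-continuous, and a finite $L^1$-net combined with the uniform bound $\|f\|_{L^\infty}$ yields $\sup_{|\omega|\leq R}|(f\ast M_\omega\Phi^*)(x)|\to 0$ as $|x|\to\infty$, which is \eqref{eq:fernandezgalbis}. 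For the direction $\eqref{eq:fernandezgalbis}\Rightarrow(ii)$ I would first upgrade \eqref{eq:fernandezgalbis} from ``some'' non-zero $\Phi$ to every non-zero $\Psi\in\mathscr{S}(\Rdd)$, by the standard change-of-window estimate
\begin{equation*}
  |V_\Psi f(x,\omega)| \;\lesssim\; \bigl(|V_\Phi f|\ast|V_\Phi \Psi|\bigr)(x,\omega),
\end{equation*}
using that $|V_\Phi\Psi|$ is Schwartz on $\R^{4d}$. Then I would pick $\Psi(z)=e^{-\pi|z|^2}$ on $\Rdd$, so that $\Psi^*=\Psi$ is a Gaussian with non-vanishing symplectic Fourier transform, giving $\Psi^*\in W(\Rdd)$; evaluating \eqref{eq:fernandezgalbis} at $\omega=0$ yields $f\ast\Psi^*\in C_0(\Rdd)$, which is $(ii)$.

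The main obstacle will be the change-of-window step: one must carry out an $\varepsilon$-splitting of the convolution integral into a region $|(y-x,\eta-\omega)|\leq N$ (where $|V_\Phi f|$ is small by hypothesis, since $|y|$ is large once $|x|$ is large and $\eta$ is confined to a bounded neighborhood of $\{|\omega|\leq R\}$) and its complement (where $|V_\Phi\Psi|$ is small by Schwartz decay), while keeping these bounds uniform in $|\omega|\leq R$. This coupling between Schwartz localization on $\R^{4d}$ and the compact frequency range is what makes the uniform transfer of \eqref{eq:fernandezgalbis} from one window to another pass through cleanly.
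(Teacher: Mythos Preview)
The paper does not actually prove Theorem \ref{thm:fernandezgalbis}; it is quoted as an external result from \cite{Fernandez:2006}, and the paper explicitly remarks that the original proof ``relies on results on relatively compact subsets of $\compacts$''. Your approach is therefore genuinely different: you derive the theorem entirely from the paper's own Tauberian machinery (Theorem \ref{thm:tauberian1}) together with the elementary identity $|V_\Phi f(x,\omega)|=|(f\ast M_\omega\Phi^*)(x)|$ and the standard change-of-window estimate.

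Your argument is correct as sketched. The compactness-related direction via conclusion $(1)$ of Theorem \ref{thm:tauberian1} even gives the stronger statement that $\mathcal{A}^{\varphi_1,\varphi_2}_f\in\compacts$ for all $\varphi_1,\varphi_2\in L^2(\Rd)$, not just Schwartz windows. What your route buys is a pleasing unification: Theorem \ref{thm:fernandezgalbis} becomes a corollary of Theorem \ref{thm:tauberian1} rather than an external input, so the logical dependence between Theorem \ref{thm:fernandezgalbis} and Proposition \ref{prop:fernandezgalbis} is effectively reversed. What the original Fern\'andez--Galbis argument buys is independence from Wiener's classical Tauberian theorem and, more substantively, validity for the larger symbol class $f\in M^\infty(\Rdd)$ noted in the remark following the statement; your route, passing through Theorem \ref{thm:tauberian1} which is formulated only for $f\in L^\infty(\Rdd)$, does not immediately deliver that extension.
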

\begin{rem}
\begin{enumerate}
	\item This requirement is weaker than both $f\in L^0(\Rdd)$ and $V_{\Phi}f\in C_0(\R^{4d})$ for some  non-zero $\Phi \in \mathscr{S}(\Rdd).$  Proving that either of these two statements implies compactness of $\mathcal{A}^{\varphi_1,\varphi_2}_f$ requires far less advanced tools than \eqref{eq:fernandezgalbis}, see \cite{Fernandez:2006}.
	\item The theorem holds for $f\in M^\infty(\Rdd)$, where $M^\infty(\Rdd)$ consists of all $f\in \mathscr{S}'(\Rdd)$ such that $V_{\varphi_0}f\in L^\infty(\R^{4d}).$ The space $M^\infty(\Rdd)$ contains $L^\infty(\Rdd)$ and certain distributions such as Dirac's delta distribution, see \cite{Grochenig:2001}.
\end{enumerate}
	
\end{rem}
This allows us to add another equivalent assumption to Theorem \ref{thm:tauberian1}, formulated in terms of the short-time Fourier transform of $f$.
\fernandezgalbis

\begin{proof}
	Consider the operator $S=\varphi_0 \otimes \varphi_0$. Then $S\in \mathcal{W}$ by \eqref{eq:stftgauss} and $f\star S=\mathcal{A}^{\varphi_0,\varphi_0}_f$ by \eqref{eq:locopsasconv}. If $(iii)$ is satisfied, Theorem \ref{thm:fernandezgalbis} implies using Lemma \ref{lem:convolutionswithidentity} that $$\mathcal{A}_{f-A}^{\varphi_0,\varphi_0}=(f-A)\star S=f\star S -A \cdot \tr(S) \cdot I_{L^2} $$ is compact, hence $(i)$ holds. If $(i)$ holds, then Theorem \ref{thm:tauberian1} $(1)$ implies that 
	$$f\star(\varphi_2\otimes \varphi_1)-A\cdot \tr(\varphi_2\otimes \varphi_1)\cdot I_{L^2}  = (f-A)\star (\varphi_2 \otimes \varphi_1)=\mathcal{A}^{\varphi_1,\varphi_2}_{f-A}$$
	 is compact for any $\varphi_1,\varphi_2 \in \mathscr{S}(\Rd)$, so Theorem \ref{thm:fernandezgalbis} implies that $(iii)$ holds. 
\end{proof}

\begin{rem}
	One may easily calculate that $$V_{\Phi}(f-A)(x,\omega)=V_\Phi f(x,\omega)-A\cdot e^{-2\pi i x \cdot \omega} \widehat{F}(\overline{\Phi})(\omega).$$ Condition $(iii)$ therefore says that for any $R>0$, if fixed $x$ is picked with $|x|$ sufficiently large, then $V_\Phi f(x,\omega)$ should uniformly approximate $A\cdot e^{-2\pi i x \cdot \omega} \widehat{F}(\overline{\Phi})(\omega)$ for $|\omega|\leq R.$
\end{rem}

Theorem \ref{thm:fernandezgalbis} is a theorem concerning compactness of operators -- its proof in \cite{Fernandez:2006} relies on results on relatively compact subsets of $\compacts$. However, Theorem \ref{thm:tauberian1} along with Proposition \ref{prop:fernandezgalbis} allows us to translate the result to functions on $\Rdd$. In fact, it leads to a characterization in terms of the short-time Fourier transform of those $f\in L^\infty(\Rdd)$ satisfying the assumptions of Wiener's classical Tauberian theorem. To our knowledge this result is new, so we formulate it as a separate statement.

\begin{thm}
	Let $A\in \mathbb{C}$ and $f\in L^\infty(\Rdd)$ be given. The following are equivalent.
	\begin{itemize}
		\item There is some non-zero $\Phi \in \mathscr{S}(\Rdd)$ such that for every $R>0$ 
	 \begin{equation*} 
	\lim_{|x|\to \infty} \sup_{|\omega|\leq R} |V_\Phi (f-A)(x,\omega)| =0. 
\end{equation*}
		\item There is $a\in W(\Rdd)$ and $h\in C_0(\Rdd)$ such that $$f\ast a = A\cdot \int_{\Rdd} a(z) \ dz + h.$$ 
		\item For any $g \in L^1(\Rdd)$ there is $h_g \in C_0(\Rdd)$ such that $$f\ast g=A\cdot \int_{\Rdd} g(z) \ dz + h_g.$$
	\end{itemize}
\end{thm}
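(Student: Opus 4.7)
The plan is to recognize this theorem as essentially a repackaging of Proposition \ref{prop:fernandezgalbis} combined with the function-side part of Theorem \ref{thm:tauberian1}. The first bullet is exactly condition $(iii)$ of Proposition \ref{prop:fernandezgalbis}, the second bullet is condition $(ii)$ of Theorem \ref{thm:tauberian1}, and the third bullet is conclusion $(2)$ of the same theorem. So the work is almost entirely bookkeeping: I just need to assemble the existing equivalences.

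First, I would observe that the first and second bullets are equivalent directly by Proposition \ref{prop:fernandezgalbis}, which states that condition $(iii)$ (the STFT condition) is equivalent to the conditions $(i)$ and $(ii)$ of Theorem \ref{thm:tauberian1}, and condition $(ii)$ of that theorem is precisely the second bullet. Next, the implication from the second bullet to the third bullet is the main content of Theorem \ref{thm:tauberian1}, namely $(ii) \Rightarrow (2)$, which holds without any extra work.

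For the remaining direction, that the third bullet implies the second, I would simply point out that $W(\Rdd)$ is nonempty: for instance, any Gaussian on $\Rdd$ lies in $W(\Rdd)$ since its symplectic Fourier transform is a nowhere-vanishing Gaussian (cf.\ Example \ref{exmp:gaussianfw} or \eqref{eq:stftgauss}). Choosing any such $a\in W(\Rdd)$ and applying the assumption of the third bullet with $g=a$ produces the required $h\in C_0(\Rdd)$.

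The main obstacle is really just notational: one must match condition $(iii)$ of Proposition \ref{prop:fernandezgalbis} with the first bullet, and condition $(ii)$ of Theorem \ref{thm:tauberian1} with the second bullet, and then close the loop by invoking nonemptiness of $W(\Rdd)$. No new analytic content is needed beyond the two results already proven.
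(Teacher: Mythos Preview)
Your proposal is correct and matches the paper's approach: the paper does not give a separate proof of this theorem but presents it as an immediate consequence of Theorem~\ref{thm:tauberian1} together with Proposition~\ref{prop:fernandezgalbis}, which is exactly the bookkeeping you describe. The only minor quibble is that Example~\ref{exmp:gaussianfw} concerns $\F_W(\varphi_0\otimes\varphi_0)$ rather than a function in $W(\Rdd)$, but your substantive point---that a Gaussian on $\Rdd$ lies in $W(\Rdd)$---is of course correct and is used elsewhere in the paper (e.g.\ in Example~\ref{exmp:tauberian1} part (3)).
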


\begin{rem}
	One might naturally ask if this result holds for $\Rd$ for any $d\geq 1$, and not just for even $d$. Our proof exploits Theorem \ref{thm:fernandezgalbis}, which has no analogue for odd $d$. We can therefore not extend  the proof to the general case.  
\end{rem}

\subsection{A closer look at the two assumptions of Theorem \ref{thm:tauberian1}}

By Remark \ref{rem:sufficientcompact} and Lemma \ref{lem:convolutionswithidentity}, $f\in L^\infty(\Rdd)$ trivially satisfies the assumptions (and conclusions) in Theorem \ref{thm:tauberian1} if $f=A + h$ for some $A\in \mathbb{C}$ and $h\in L^p(\Rdd)$ for $1\leq p <\infty$ or $p=0$. We will now see examples that do not satisfy these conditions. 

\begin{exmp} \label{exmp:tauberian1}
	\begin{enumerate}
		\item In \cite[Prop. 4.1]{Fernandez:2007}, Galbis and Fern\'andez show that the function  $f(x,\omega)=e^{i\pi |z|^2}$ satisfies condition $(iii)$ from Proposition \ref{prop:fernandezgalbis}, hence it satisfies $(i)$ and $(ii)$ in Theorem \ref{thm:tauberian1}. Clearly $f\notin L^p(\Rdd)$ for $p=0$ or $1\leq p <\infty$. 
		\item Given $\tau \in (0,1)\setminus \{1/2\}$, the function $
	a_{\tau}(x,\omega)=
		 \frac{2^d}{|2\tau-1|^d}\cdot e^{2\pi i \frac{2}{2\tau -1}x\cdot \omega} $ satisfies assumption $(i)$ in Theorem \ref{thm:tauberian1} with $A=0$, as we prove in Proposition \ref{prop:tauwienerexample}. Again, we see that $a_\tau \notin L^p(\Rdd)$ for $p=0$ or $1\leq p <\infty$.
%		\item Since assumption $(ii)$ in Theorem \ref{thm:tauberian1} is the assumption in Wiener's classical Tauberian theorem, we can use examples constructed in classical Tauberian theory. For instance, \cite[Thm. Ch. II Prop. 16.1]{Korevaar:2004} constructs, for any $a\in W(\R)$ , a real-valued, continuous $f\in L^\infty(\R)$ with $-1\leq f(x) \leq 1$ such that $|f(x)|$ has no limit as $|x|\to \infty$, yet $f\ast a\in C_0(\R).$ In fact, $f$ is constructed as an infinite sum of non-overlapping "zigzag" functions. It is not difficult to extend this construction to $\Rdd$ by taking tensor products and picking $a=\varphi_0\in W(\Rdd)$, hence obtaining examples of continuous, real-valued $f\in L^\infty(\Rdd)$ where $|f(z)|$ has no limit as $|z|\to \infty$ yet assumption $(ii)$ is still satisfied for $A=0$.
		\item If $f\in L^\infty(\Rdd)$ is a so-called \textit{pseudomeasure}, meaning that $\F_\sigma(f)\in L^\infty(\Rdd)$, then $f$ satisfies $(ii)$ with $A=0$. To see this, let $a(z)=e^{-\pi  |z|^2}$. Then $\F_\sigma(a)=a$ has no zeros, and $$f\ast a=\F_\sigma \F_\sigma (f\ast a)=\F_\sigma(\F_\sigma(f)\cdot a),$$ and since $\F_\sigma(f)\in L^\infty(\Rdd)$ we have $\F_\sigma(f)\cdot a \in L^1(\Rdd)$. Hence $f\ast a\in C_0(\Rdd)$ by the Riemann-Lebesgue lemma.  \\
		Rather surprisingly, we may prove $(1)$ directly in this case by considering the operator side of our setup. For any $T\in \tco$, we obtain that $\F_W(T)\in L^2(\Rdd)$ since $\tco \subset \HS$ and $\F_W:\HS \to L^2(\Rdd)$ is a unitary operator. By our assumption on $f$, it follows that $\F_W(f\star T)=\F_\sigma(f)\F_W(T)\in L^2(\Rdd)$, hence $f\star T \in \HS\subset \compacts$. The key to this calculation is the inclusion $L^\infty(\Rdd)\cdot \F_W(\tco) \subset L^2(\Rdd)$ -- the corresponding function result $L^\infty\cdot \F_\sigma(L^1)\subset L^2$ is not true by the results in \cite{Braun:1983}. 
	\end{enumerate}
\end{exmp}

%\begin{cor}
%	There is a real-valued, continuous $f\in L^\infty(\Rdd)$ such that $f\star S$ is compact for any $S\in \tco$, yet $\lim_{|z|\to \infty} |f(z)|$ does not exist. In particular, $\mathcal{A}^{\varphi_1,\varphi_2}_f$ is compact for all $\varphi_1,\varphi_2\in L^2(\Rd).$ 
%\end{cor}

The examples above show that it is not necessary to have $\lim_{|z|\to \infty} f(z) = 0$ in order to satisfy assumptions $(i)$ and $(ii)$ with $A=0$. A well-known result in the Tauberian theory of functions due to Pitt \cite{Pitt:1938} says that if we assume that $f$ is \textit{slowly oscillating}, then $\lim_{|z|\to \infty} f(z) = 0$ is necessary for $f$ to satisfy $(ii)$. 

Recall that $f$ is slowly oscillating on $\Rdd$ if for every $\epsilon >0$ there is $\delta>0$ and $K>0$ such that $|f(z)-f(z-z')|<\epsilon$ for $|z'|<\delta$ and $|z|>K$. We refer to \cite[Thm. 4.74]{Folland:2015} for a formulation of Pitt's result that applies to $\Rdd$.

\begin{thm}[Pitt] \label{thm:pitt}
	If $f\in L^\infty(\Rdd)$ is slowly oscillating and satisfies either assumption $(i)$ or $(ii)$ in Theorem \ref{thm:tauberian1} or $(iii)$ from Proposition \ref{prop:fernandezgalbis} with $A=0$, then $f\in L^0(\Rdd).$
\end{thm}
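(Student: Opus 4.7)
The plan is to reduce the hypothesis to condition $(ii)$ of Theorem \ref{thm:tauberian1} and then invoke Pitt's classical Tauberian theorem as stated in \cite[Thm. 4.74]{Folland:2015}. Since the paper has already established the equivalence of $(i)$ and $(ii)$ in Theorem \ref{thm:tauberian1}, and the equivalence of both with condition $(iii)$ in Proposition \ref{prop:fernandezgalbis}, it suffices to treat a single one of these three starting points, after which the other two follow for free. Condition $(ii)$ is by far the most convenient, as it already has the exact shape required by the classical theorem.

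Concretely, I would proceed as follows. First, use the equivalences cited above to assume without loss of generality that $f$ satisfies $(ii)$ with $A=0$: there exists $a\in W(\Rdd)$ and $h\in C_0(\Rdd)$ with
\begin{equation*}
f\ast a = h.
\end{equation*}
Now recall that $W(\Rdd)$ was defined in terms of non-vanishing of the symplectic Fourier transform $\F_\sigma(a)$, and observe that by the identity $\F_\sigma(a)(x,\omega)=\widehat{a}(\omega,-x)$ this is equivalent to the ordinary Fourier transform $\widehat{a}$ being non-vanishing on $\Rdd$. Thus the hypotheses of Pitt's classical Tauberian theorem on $\Rdd$ are in force: we have a bounded slowly oscillating function $f$ on $\Rdd$ whose convolution with a fixed $a\in L^1(\Rdd)$ with non-vanishing Fourier transform lies in $C_0(\Rdd)$.

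Applying \cite[Thm. 4.74]{Folland:2015} directly yields $\lim_{|z|\to\infty} f(z)=0$, which is precisely the statement $f\in L^0(\Rdd)$ in the notation of this paper. There is essentially no hard step: the bulk of the work is absorbed into the previously established equivalences of Theorem \ref{thm:tauberian1} and Proposition \ref{prop:fernandezgalbis}, and into the cited classical result. The only minor subtlety worth double-checking is the matching of Fourier transform conventions (symplectic vs.\ ordinary) so that the Tauberian non-vanishing hypothesis on $a$ transfers correctly; this is immediate from the relation $\F_\sigma(a)(x,\omega)=\widehat{a}(\omega,-x)$ noted above.
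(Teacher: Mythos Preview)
Your proposal is correct and matches the paper's approach exactly: the paper does not give a standalone proof of Theorem \ref{thm:pitt} but simply refers to \cite[Thm. 4.74]{Folland:2015} for the classical Pitt theorem on $\Rdd$, relying implicitly on the already-established equivalence of $(i)$, $(ii)$ and $(iii)$ to reduce to condition $(ii)$. Your write-up just makes explicit the reduction step and the matching of Fourier conventions that the paper leaves to the reader.
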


\begin{rem}
	Any uniformly continuous $f\in L^\infty(\Rdd)$ is slowly oscillating, hence if such $f$ satisfies $(i)$ with $A=0$, then $f\in C_0(\Rdd)$. This weaker statement actually follows from the correspondence theory introduced by Werner in \cite{Werner:1984}, more precisely by \cite[Thm. 4.1 (3)]{Werner:1984}. In Werner's terminology $C_0(\Rdd)$ and $\compacts$ are corresponding subspaces, since convolutions with trace class operators interchanges these two spaces by Lemma \ref{lem:compactL0}. We will see the operator-analogue of this result in Section \ref{sec:pittoperator}
\end{rem}
\subsubsection{Consequences for Toeplitz operators}

We now formulate a version of the Tauberian theorem for (polyanalytic) Bargmann-Fock Toeplitz operators. As a preliminary observation, let $\mathcal{H}_1,\mathcal{H}_2$ be two Hilbert spaces. If $S\in \mathcal{L}(\mathcal{H}_1)$ and $T\in \mathcal{L}(\mathcal{H}_2)$ are unitarily equivalent, i.e. there is unitary $U:\mathcal{H}_1\to \mathcal{H}_2$ such that $S=U^*TU$, then one easily checks that $S=A\cdot I_{\mathcal{H}_1}+K_1$ for $A\in \mathbb{C}$ and compact $K_1\in \mathcal{L}(\mathcal{H}_1)$ if and only if $T=A\cdot I_{\mathcal{H}_2}+K_2$ for compact $K_2\in \mathcal{L}(\mathcal{H}_2)$.

\begin{prop}
Let $F\in L^\infty(\mathbb{C}^d)$ and $A\in \mathbb{C}$. Define $f\in L^\infty(\Rdd)$ by $f(x,\omega)=F(x-i\omega)$. The following are equivalent:
\begin{enumerate}[(i)]
	\item $T^{\mathcal{F}^2}_F=A\cdot I_{\mathcal{F}^2} + \tilde{K}_0$ for some compact operator $\tilde{K}_0$ on $\mathcal{F}^2(\mathbb{C}^d)$.
	\item There is some $a\in W(\Rdd)$ such that $$f\ast a = A\cdot \int_{\Rdd} a(z) \ dz + h_a$$ for some $h_a\in C_0(\Rdd)$.
	\item There is some non-zero $\Phi \in \mathscr{S}(\Rdd)$ such that for every $R>0$
	 \begin{equation*} 
	\lim_{|x|\to \infty} \sup_{|\omega|\leq R} |V_\Phi (f-A)(x,\omega)| =0. 
\end{equation*}
\end{enumerate}	
Furthermore, if any of the equivalent conditions above holds, then for any $n\in \mathbb{N}^d$ the polyanalytic Toeplitz operator $T_{F}^{\mathcal{F}^2_n}$ is of the form $$T_{F}^{\mathcal{F}^2_n}=A\cdot I_{\mathcal{F}^2_n}+\tilde{K}_n,$$ where $\tilde{K}_n$ is a compact operator on $\mathcal{F}^2_n(\mathbb{C}^d)$.
\end{prop}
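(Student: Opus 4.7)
The plan is to reduce this proposition entirely to Theorem \ref{thm:tauberian1} and Proposition \ref{prop:fernandezgalbis} via the unitary equivalences from Propositions \ref{prop:unitarilyequivalent} and \ref{prop:uneqpolyana}. As noted in the preliminary remark just before the proposition, the property of being of the form $A\cdot I_{\mathcal{H}} + K$ with $K$ compact is preserved under unitary equivalence, so I can freely pass between Toeplitz operators on the (polyanalytic) Bargmann-Fock spaces and localization operators on $L^2(\Rd)$.

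First I would observe that by Proposition \ref{prop:unitarilyequivalent} and \eqref{eq:locopsasconv}, the Toeplitz operator $T_F^{\mathcal{F}^2}$ is unitarily equivalent to $\mathcal{A}^{\varphi_0,\varphi_0}_f = f\star(\varphi_0\otimes\varphi_0)$. Since $\|\varphi_0\|_{L^2}=1$ gives $\tr(\varphi_0\otimes\varphi_0)=1$, and since $\varphi_0\otimes\varphi_0\in\mathcal{W}$ by Example \ref{exmp:gaussianfw}, condition (i) becomes exactly condition (i) of Theorem \ref{thm:tauberian1} with $S = \varphi_0\otimes\varphi_0$. The equivalence (i)$\Leftrightarrow$(ii) in Theorem \ref{thm:tauberian1} then yields our equivalence (i)$\Leftrightarrow$(ii), and Proposition \ref{prop:fernandezgalbis} supplies the third equivalence with (iii).

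For the furthermore part, assuming the equivalent conditions hold, I would invoke part (1) of Theorem \ref{thm:tauberian1}: for every $T\in\tco$, we have $f\star T = A\cdot\tr(T)\cdot I_{L^2} + K_T$ for some compact $K_T$. Specialising to $T = \varphi_n\otimes\varphi_n$, which is trace class of trace $\|\varphi_n\|_{L^2}^2 = 1$, and using \eqref{eq:locopsasconv}, I obtain $\mathcal{A}^{\varphi_n,\varphi_n}_f = A\cdot I_{L^2}+K_n$ with $K_n$ compact. Finally, Proposition \ref{prop:uneqpolyana} says that $\mathcal{A}^{\varphi_n,\varphi_n}_f$ is unitarily equivalent to $T_F^{\mathcal{F}^2_n}$, and the preservation of the form $A\cdot I + \mathrm{compact}$ under unitary equivalence transfers this to $T_F^{\mathcal{F}^2_n} = A\cdot I_{\mathcal{F}^2_n} + \tilde{K}_n$ with $\tilde{K}_n$ compact, as required.

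There are no real obstacles here: the proof is entirely a translation of the already-proven Tauberian theorem through the structural equivalences the paper has set up. The only point requiring care is the bookkeeping of constants, namely that the normalization $\|\varphi_n\|_{L^2}=1$ ensures $\tr(\varphi_n\otimes\varphi_n)=1$, so that the scalar coefficient of the identity on the Toeplitz side matches $A$ exactly rather than picking up an extraneous factor.
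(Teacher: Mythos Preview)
Your proof is correct and follows essentially the same route as the paper: reduce to $f\star(\varphi_0\otimes\varphi_0)$ via Proposition \ref{prop:unitarilyequivalent}, invoke $\varphi_0\otimes\varphi_0\in\mathcal{W}$ so that Theorem \ref{thm:tauberian1} and Proposition \ref{prop:fernandezgalbis} give the equivalences, then apply conclusion (1) of Theorem \ref{thm:tauberian1} with $T=\varphi_n\otimes\varphi_n$ and transfer back via Proposition \ref{prop:uneqpolyana}. The bookkeeping point you flag about $\tr(\varphi_n\otimes\varphi_n)=1$ is exactly the one the paper makes explicit as well.
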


\begin{proof}
	By Proposition \ref{prop:unitarilyequivalent}, $T^{\mathcal{F}^2}_F$ is unitarily equivalent to $\mathcal{A}^{\varphi_0,\varphi_0}_f=f\star (\varphi_0 \otimes \varphi_0).$ By the remark above, part $(i)$ holds if and only if $f\star (\varphi_0 \otimes \varphi_0)=A\cdot I_{L^2} +K_0$ for some compact operator $K_0$ on $L^2(\Rd)$. Since $\varphi_0\otimes \varphi_0\in \mathcal{W}$ by \eqref{eq:stftgauss}, the fact that $(i),(ii)$ and $(iii)$ are equivalent follows from Proposition \ref{prop:fernandezgalbis}.
	
	 As we have seen that $(i)$ implies that $f\star (\varphi_0 \otimes \varphi_0)=A\cdot I_{L^2} +K_0$ and that $\varphi_0 \otimes \varphi_0\in \mathcal{W}$, Theorem \ref{thm:tauberian1} implies that for every $n$ there is a compact $K_n$ with $$f\star (\varphi_n \otimes \varphi_n)=A\cdot I_{L^2} \cdot  \tr(\varphi_n \otimes \varphi_n)+K_n=A\cdot I_{L^2}  +K_n.$$ The last statement then follows as $T_{F}^{\mathcal{F}^2_n}$ is unitarily equivalent to $\mathcal{A}^{\varphi_n,\varphi_n}_f=f\star (\varphi_n \otimes \varphi_n)$ by Proposition \ref{prop:uneqpolyana}.
\end{proof}
\begin{rem}
 The equivalence of $(i)$ and $(ii)$ when $a$ is fixed to be the Gaussian $a(x,\omega)=e^{-\pi(x^2+\omega^2)}$ is due to Engli\v{s}, see the equivalence of $(a)$ and $(c)$ in \cite[Thm. B]{Englis:1999}. Note that Engli\v{s} also considers products of Toeplitz operators, which is a setting we will return to in Section \ref{sec:pittoperator}.
\end{rem}
The same reasoning gives the following Tauberian theorem for Toeplitz operators on Gabor spaces using Proposition \ref{prop:locoptoeplitz}.

\begin{prop}
	Let $f\in L^\infty(\Rdd)$ and $A\in \mathbb{C}$. The following are equivalent.
	\begin{enumerate}[(i)]
		\item There is some $\varphi \in L^2(\Rd)$ such that $V_\varphi\varphi$ has no zeros and $T^\varphi_f=A\cdot I_{V_\varphi(L^2)}+K$ for some compact operator $K\in \mathcal{L}(V_\varphi(L^2))$ .
		\item There is some $a\in W(\Rdd)$ such that
	$
  f\ast a=A\cdot \int_{\Rdd} a (z) \ dz+h 
$
for some $h\in C_0(\Rdd)$.
		\item There is some non-zero $\Phi \in \mathscr{S}(\Rdd)$ such that for every $R>0$ 
	$$
	\lim_{|x|\to \infty} \sup_{|\omega|\leq R} |V_\Phi (f-A)(x,\omega)| =0. 
$$
	\end{enumerate}
	Furthermore, if any of the equivalent conditions above holds, then for every normalized $\varphi'\in L^2(\Rd)$ we have that $T^{\varphi'}_f$ is of the form $A\cdot I_{V_{\varphi'}(L^2)}+K_{\varphi '}$ for some compact operator $K_{\varphi '}\in \mathcal{L}(V_{\varphi '}(L^2))$.
	\end{prop}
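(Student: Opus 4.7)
The plan is to reduce everything to the operator-level Tauberian statement Theorem \ref{thm:tauberian1} by transporting Toeplitz operators to localization operators, which are in turn convolutions with rank-one operators. Concretely, for a normalized $\varphi\in L^2(\Rd)$ we have by Proposition \ref{prop:locoptoeplitz} a unitary equivalence $T_f^\varphi \simeq \mathcal{A}_f^{\varphi,\varphi}$, and by \eqref{eq:locopsasconv} an identification $\mathcal{A}_f^{\varphi,\varphi}=f\star(\varphi\otimes\varphi)$. The key observation is that $\tr(\varphi\otimes\varphi)=\|\varphi\|_{L^2}^2=1$ and, by Lemma \ref{lem:rankonecase}, $\F_W(\varphi\otimes\varphi)(x,\omega)=e^{i\pi x\cdot\omega}V_\varphi\varphi(x,\omega)$, so that $V_\varphi\varphi$ has no zeros if and only if $\varphi\otimes\varphi\in\mathcal{W}$.

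With this dictionary in hand the equivalences are almost automatic. For (i) $\Leftrightarrow$ (ii): unitary equivalence preserves the form ``scalar multiple of the identity plus a compact operator'', so (i) is equivalent to the existence of a normalized $\varphi$ with $\varphi\otimes\varphi\in\mathcal{W}$ such that
\begin{equation*}
    f\star(\varphi\otimes\varphi)=A\cdot\tr(\varphi\otimes\varphi)\cdot I_{L^2}+K'
\end{equation*}
for some $K'\in\compacts$. This is precisely condition (i) of Theorem \ref{thm:tauberian1} for $S=\varphi\otimes\varphi$, which is equivalent to (ii) of that theorem, i.e. to our (ii) here. The equivalence with (iii) is then immediate from Proposition \ref{prop:fernandezgalbis}, which provides exactly this characterization.

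For the final assertion, assume any (hence all) of (i)--(iii) holds. Then condition (i) of Theorem \ref{thm:tauberian1} is in force, so conclusion (1) of that theorem applies to every $T\in\tco$. Choose $T=\varphi'\otimes\varphi'$ with $\|\varphi'\|_{L^2}=1$, so that $\tr(T)=1$; this yields a compact $K_{\varphi'}\in\compacts$ with
\begin{equation*}
    \mathcal{A}_f^{\varphi',\varphi'}=f\star(\varphi'\otimes\varphi')=A\cdot I_{L^2}+K_{\varphi'}.
\end{equation*}
Transporting this back through the unitary equivalence of Proposition \ref{prop:locoptoeplitz} (conjugation by a unitary sends compacts to compacts and the identity to the identity) gives $T_f^{\varphi'}=A\cdot I_{V_{\varphi'}(L^2)}+K'_{\varphi'}$ for some compact $K'_{\varphi'}\in\mathcal{L}(V_{\varphi'}(L^2))$, as required. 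There is no real obstacle here beyond keeping track of normalizations and noting that unitary equivalence preserves the class of compact perturbations of scalar multiples of the identity; all the substantive work has already been done in Theorem \ref{thm:tauberian1} and Proposition \ref{prop:fernandezgalbis}.
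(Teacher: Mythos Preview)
Your argument is correct and follows the same route the paper intends (the paper itself gives no separate proof, merely noting that ``the same reasoning'' as in the Bargmann--Fock proposition applies via Proposition~\ref{prop:locoptoeplitz}). One small logical wrinkle: in your $(i)\Leftrightarrow(ii)$ paragraph, the implication $(ii)\Rightarrow(i)$ is not immediate from Theorem~\ref{thm:tauberian1} alone, since the $S\in\mathcal{W}$ produced there need not be of the form $\varphi\otimes\varphi$ --- but your ``furthermore'' argument, specialized to $\varphi'=\varphi_0$, closes this loop, so the proof as a whole is complete.
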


\subsection{Injectivity of localization operators and Riesz theory of compact operators}

We will now let the kind of operators appearing in Theorem \ref{thm:tauberian1} inspire a slight detour that does not explicitly build on the Tauberian theorems. Theorem \ref{thm:tauberian1} gives conditions to ensure that a localization operator $\mathcal{A}^{\varphi,\varphi}_f$ is a compact perturbation of a scaling of the identity, i.e. of the form $A\cdot I_{L^2}  +K$ for $0\neq A \in \mathbb{C}$ and $K\in \compacts$. The theory of such operators, sometimes referred to as Riesz theory due to the seminal work of F. Riesz \cite{Riesz:1916}, contains several powerful results similar to those that hold for matrices. We will use the following result, see \cite[Lem. 6.30 \& Thm. 6.33]{Bowers:2014} for proofs.

\begin{prop} \label{prop:riesztheory}
	Assume that $T\in \bo$ is of the form $A\cdot I_{L^2} +K$ for $A\neq 0$ and $K\in \compacts$. Then $T$ has closed range and $\dim(\ker T)=\dim (\mathrm{coker}(T))<\infty$. In particular, $T$ is injective if and only if $T$ is surjective.
\end{prop}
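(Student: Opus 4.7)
The plan is to reduce immediately to the normalized case and then invoke the classical Riesz--Fredholm theory of $I+K$ with $K$ compact; since $A\neq 0$, dividing by $A$ does not affect any of the claimed properties and writing $T' = T/A = I_{L^2} + A^{-1}K$, we have $A^{-1}K \in \compacts$, so it suffices to prove the statement for operators of the form $I_{L^2} + K$ with $K\in \compacts$.

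First I would establish that $\ker(I_{L^2}+K)$ is finite-dimensional. For $\psi \in \ker(I_{L^2}+K)$ we have $\psi = -K\psi$, so the restriction of $-K$ to the unit ball of $\ker(I_{L^2}+K)$ agrees with the inclusion of that unit ball into $L^2(\Rd)$. Since $K$ is compact, the image under $-K$ of a bounded set is relatively compact; hence the unit ball of $\ker(I_{L^2}+K)$ is relatively compact, and Riesz's lemma forces $\dim \ker(I_{L^2}+K)<\infty$.

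Next I would prove that the range of $T := I_{L^2}+K$ is closed. Decompose $L^2(\Rd) = \ker T \oplus (\ker T)^\perp$, which is possible because $\ker T$ is closed and finite-dimensional. Given a sequence $\psi_n \in (\ker T)^\perp$ with $T\psi_n \to \phi$, I would first show $\|\psi_n\|_{L^2}$ is bounded: if not, pass to a subsequence with $\|\psi_n\|_{L^2} \to \infty$, set $\xi_n = \psi_n/\|\psi_n\|_{L^2}$, and observe $T\xi_n \to 0$. Compactness of $K$ yields a subsequence with $K\xi_n \to \eta$, so $\xi_n = T\xi_n - K\xi_n \to -\eta$, giving $\|\eta\|_{L^2}=1$ and $T\eta = 0$; but $\eta \in (\ker T)^\perp$, contradiction. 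With $\psi_n$ bounded, compactness of $K$ again produces a convergent subsequence $K\psi_n \to \zeta$, hence $\psi_n = T\psi_n - K\psi_n$ converges to some $\psi$ with $T\psi = \phi$, so the range is closed.

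Finally I would obtain $\dim\ker T = \dim\mathrm{coker}\, T$. The cleanest route is to invoke that $T^* = I_{L^2} + K^*$ with $K^* \in \compacts$, so the argument above applied to $T^*$ gives $\ker T^*$ finite-dimensional and of closed range; combined with $\mathrm{coker}\, T \cong \ker T^*$ (via the closed range theorem, using that $\mathrm{ran}\, T$ is closed), this shows the cokernel is finite-dimensional. Equality of the two dimensions is the Fredholm-index statement $\mathrm{ind}(I_{L^2}+K) = \mathrm{ind}(I_{L^2}) = 0$, valid because compact perturbations preserve the Fredholm index. The expected main obstacle is precisely this index-zero step: unlike finite-dimensionality of the kernel and closedness of the range, which follow from elementary compactness arguments, the equality $\dim\ker T = \dim\mathrm{coker}\, T$ requires either a homotopy/continuity argument for the index or an explicit analysis via the ascending chain $\ker T \subset \ker T^2 \subset \cdots$ that stabilizes after finitely many steps (again by Riesz's lemma applied to $K$). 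The "in particular" clause is then immediate: if $T$ is injective, $\dim\ker T = 0 = \dim\mathrm{coker}\, T$, so combined with closed range we get $\mathrm{ran}\, T = L^2(\Rd)$, and conversely surjectivity forces $\dim\mathrm{coker}\, T = 0 = \dim\ker T$.
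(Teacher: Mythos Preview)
Your proof sketch is correct and follows the standard Riesz--Fredholm argument. Note, however, that the paper does not actually prove this proposition: it simply cites \cite[Lem.~6.30 \& Thm.~6.33]{Bowers:2014} and moves on, treating the result as a classical black box. So there is nothing to compare at the level of argument---you have supplied a self-contained proof where the paper chose to defer to the literature. Your reduction to $I_{L^2}+K$, the compactness argument for finite-dimensional kernel, the closed-range argument via bounded sequences, and the appeal to index stability under compact perturbation are all standard and sound; the only place one might ask for a bit more detail is the index-zero step, which you correctly flag as the one non-elementary ingredient.
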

As an obvious consequence, we note that if $\mathcal{A}_{f}^{\varphi,\varphi}=A\cdot I_{L^2}+K$ for $A\neq 0$ and $K\in \compacts$ and $\mathcal{A}^{\varphi,\varphi}_f$ is injective, then $\mathcal{A}^{\varphi,\varphi}_f$ is an isomorphism on $L^2(\Rd)$. Inspired by this, we investigate conditions ensuring that localization operators are injective. The proof of the next result is similar to that of \cite[Lem. 1.4]{Boggiatto:2005}.

\begin{lem}  \label{lem:injectivelocop}
	Assume that $f\in L^\infty(\Rdd)$ such that $f(z)\geq 0$ for a.e. $z\in \Rdd$. 
	
	\begin{enumerate}
	\item 	If $0\neq \varphi\in L^2(\Rdd)$ and there is $\Delta\subset \Rdd$ of finite Lebesgue measure with $$f(z)> 0 \quad \text{ for a.e. } z\notin \Delta,$$
	then the localization operator $\mathcal{A}^{\varphi,\varphi}_f$ is injective. 
	\item If there is some open subset $\Omega \subset \Rdd$ such that $$f(z)> 0 \quad \text{ for a.e. } z\in \Omega,$$ then the localization operator $\mathcal{A}^{\varphi_0,\varphi_0}_f$ is injective.
	\end{enumerate}
\end{lem}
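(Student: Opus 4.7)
The plan is to exploit the positivity of $\mathcal{A}^{\varphi,\varphi}_f$ under the assumption $f\ge 0$. Testing $\mathcal{A}^{\varphi,\varphi}_f\psi$ against $\psi$ via the weak definition \eqref{eq:weaklocop} would give
\[
\inner{\mathcal{A}^{\varphi,\varphi}_f \psi}{\psi}_{L^2(\Rd)} = \int_{\Rdd} f(z)\, |V_\varphi\psi(z)|^2\, dz,
\]
which is nonnegative. So if $\mathcal{A}^{\varphi,\varphi}_f\psi=0$, the integral vanishes, and since the integrand is nonnegative a.e., I would conclude $V_\varphi\psi=0$ a.e.\ on $\{f>0\}$.

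For part (1), the hypothesis yields $\{f>0\}\supseteq \Rdd\setminus\Delta$ modulo a null set, so the open set $\{V_\varphi\psi\neq 0\}$ (open by continuity of the STFT) lies in $\Delta$ modulo a null set; in particular it has finite Lebesgue measure. The next step is to invoke a strong uncertainty principle for the short-time Fourier transform (Wilczok's theorem): for $\psi,\varphi\in L^2(\Rd)\setminus\{0\}$, the set $\{V_\varphi\psi\neq 0\}$ has infinite Lebesgue measure. This forces $\psi=0$, giving injectivity.

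For part (2), the same positivity argument yields $V_{\varphi_0}\psi=0$ a.e.\ on $\Omega$, and since $V_{\varphi_0}\psi$ is continuous this vanishing is identical on $\Omega$. By \eqref{eq:bargmannstft}, $V_{\varphi_0}\psi$ equals the Bargmann transform $\mathcal{B}\psi$ composed with an affine change of variables on $\mathbb{C}^d$ and multiplied by a nowhere-vanishing Gaussian factor. Hence $\mathcal{B}\psi$ vanishes on a nonempty open subset of $\mathbb{C}^d$, and the identity theorem for entire functions on $\mathbb{C}^d$ forces $\mathcal{B}\psi\equiv 0$, giving $\psi=0$.

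The hard part is part (1): for a general window $\varphi$ one cannot appeal to analyticity, and the essential nontrivial input becomes the uncertainty-principle fact that a nonzero $V_\varphi\psi$ cannot be supported on a set of finite Lebesgue measure. Part (2) sidesteps this obstacle by using the rigidity of entire functions on $\mathbb{C}^d$ available specifically for the Gaussian window.
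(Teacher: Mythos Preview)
Your proposal is correct and follows essentially the same approach as the paper: the positivity argument via \eqref{eq:weaklocop} to force $V_\varphi\psi$ to vanish off a small set, then an uncertainty principle for the STFT in part (1) and analyticity of the Bargmann transform in part (2). The only cosmetic difference is attribution---the paper invokes Janssen's result \cite{Janssen:1998a} rather than Wilczok for the finite-measure support obstruction---but the mathematical content is identical.
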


\begin{proof}
	We first prove $(1).$ Assume that $\mathcal{A}^{\varphi,\varphi}_f(\psi)=0$. This implies by \eqref{eq:weaklocop} that 
	\begin{equation*}
  \inner{\mathcal{A}^{\varphi,\varphi}_f (\psi)}{\psi}_{L^2}=\int_{\Rdd} f(z) |V_\varphi \psi(z)|^2 \ dz =0.
\end{equation*}
Since we assume that $f$ is non-negative for a.e. $z$, this further implies that 
\begin{equation*}% \label{eq:injectivelocop1}
  \int_{\Rdd\setminus \Delta} f(z) |V_\varphi \psi(z)|^2 \ dz=0.
\end{equation*}
This implies that $V_\varphi \psi(z)=0$ for a.e. $z\notin \Delta$. Hence $\psi=0$, as the main result of \cite{Janssen:1998a} says that $V_\varphi \psi(z)$ cannot be supported on a set of finite Lebesgue measure unless $\psi=0$ or $\varphi=0$. 

To prove $(2)$, a similar argument as above shows that $\mathcal{A}^{\varphi_0,\varphi_0}_f(\psi)=0$ implies that $V_{\varphi_0}\psi(z)=0$ for a.e. $z\in \Omega$. Continuity gives that $V_{\varphi_0}\psi(z)=0$ for all $z\in \Omega$. The analytic function $\mathcal{B}(\psi)(x+i\omega)=e^{-\pi i x \cdot \omega}e^{-\frac{\pi}{2}(x^2+\omega^2)} V_{\varphi_0}\psi(x,-\omega)$ therefore vanishes on an open subset of $\mathbb{C}^d$, hence $\mathcal{B}(\psi)=0$ by uniqueness of analytic continuation. Thus $\psi=0$ as $\mathcal{B}$ is injective.
\end{proof}
We deduce sufficient conditions for localization operators to be isomorphisms.

\localizationisomorphism
\begin{proof}
	By Lemma \ref{lem:injectivelocop} part $(1)$, $\mathcal{A}^{\varphi,\varphi}_f$ is injective. By assumption $(iii)$, Theorem \ref{thm:tauberian1} gives that $a\star (\varphi\otimes \varphi)\in \compacts$, so that $$\mathcal{A}^{\varphi,\varphi}_f=(M+a)\star (\varphi\otimes \varphi)=M\cdot \|\varphi\|_{L^2}^2 \cdot I_{L^2}+a\star (\varphi\otimes \varphi)$$ is a compact perturbation of a scaling of the identity. Hence Proposition \ref{prop:riesztheory} implies that $\mathcal{A}^{\varphi,\varphi}_f$ is also surjective.
\end{proof}

\begin{rem} \label{rem:iso}
\begin{enumerate}
	\item Finding specific examples of $a$ satisfying the assumptions above is not difficult, but it is worth noting that $a$ need not vanish at infinity. For instance, a standard construction gives continuous $a\in L^1(\Rdd)\cap L^\infty(\Rdd)$ such that $0\leq a \leq 1$, $\limsup_{|z|\to \infty} |a(z)|=1$ and $\liminf_{|z|\to \infty} |a(z)|=0$. Then $a$ satisfies all three conditions above for $M>0$, even though $f=M+a$ has no limit as $|z|\to \infty$.  Of course, if we add the condition that $a$ is slowly oscillating, then $a$ must vanish at infinity by Theorem \ref{thm:pitt}. 
	\item Other isomorphism theorems for localization operators may be found in \cite{Boggiatto:2005,Grochenig:2011toft,Grochenig:2013}.
\end{enumerate}
\end{rem}
We state a special case of Proposition \ref{prop:isomorphisms} as a theorem, namely the case where $f=\chi_{\Omega}$  such that $\Omega^c$ has finite measure. We find that as long as $\Omega^c$ has finite measure, the values of $V_\varphi \psi(z)$ for $z\in \Omega^c$ are not needed to reconstruct $\psi$ -- independently of the geometry of $\Omega$ and the window $\varphi$.
\begin{thm} \label{thm:iso}
	Assume that $\Omega\subset \Rdd$ satisfies that $\Omega^c$ has finite Lebesgue measure, and that $0\neq \varphi \in L^2(\Rd)$. Then the localization operator $\mathcal{A}^{\varphi,\varphi}_{\chi_\Omega}$ is an isomorphism on $L^2(\Rd)$. 
	In particular, any $0\neq \psi \in L^2(\Rd)$ is uniquely determined by the values of $V_\varphi \psi(z)$ for $z\in \Omega$ and there exist constants $C,D>0$ independent of $\psi$ such that $$C\cdot \|\psi\|_{L^2}\leq   \left\| \int_{\Omega} V_\varphi \psi(z) \pi(z) \varphi \ dz \right\|_{L^2}\leq D\cdot \|\psi\|_{L^2}. $$ 
\end{thm}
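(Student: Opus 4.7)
The plan is to apply Proposition \ref{prop:isomorphisms} directly with a well-chosen decomposition of $\chi_\Omega$ and then read off the integral estimates from the open mapping theorem. Specifically, I would write
\begin{equation*}
\chi_\Omega = M + a, \qquad M = 1, \qquad a = -\chi_{\Omega^c},
\end{equation*}
and set $\Delta = \Omega^c$. Since $\Omega^c$ has finite Lebesgue measure by assumption, $\Delta$ has finite measure, and $a \in L^1(\Rdd) \cap L^\infty(\Rdd)$.

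Next I would verify conditions $(i)$--$(iii)$ of Proposition \ref{prop:isomorphisms}. For $(i)$, $a(z) = -\chi_{\Omega^c}(z) \geq -1 = -M$ for all $z \in \Rdd$. For $(ii)$, if $z \notin \Delta = \Omega^c$, then $z \in \Omega$, so $a(z) = 0 > -M$. For $(iii)$, since $a \in L^1(\Rdd)$, the discussion immediately preceding Example \ref{exmp:tauberian1} (using Remark \ref{rem:sufficientcompact} and Lemma \ref{lem:convolutionswithidentity}) shows that $a$ trivially satisfies condition $(ii)$ of Theorem \ref{thm:tauberian1} with $A = 0$: for any $g \in L^1(\Rdd)$, $a \ast g \in L^1(\Rdd) \subset C_0(\Rdd)$ (by e.g.\ Young and continuity of translation), and one may also pick, e.g., $g(z) = e^{-\pi|z|^2}$ with $\F_\sigma(g)$ nowhere vanishing to produce a concrete witness. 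Thus Proposition \ref{prop:isomorphisms} applies and yields that $\mathcal{A}^{\varphi,\varphi}_{\chi_\Omega}$ is an isomorphism on $L^2(\Rd)$ for every $0 \neq \varphi \in L^2(\Rd)$.

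Finally I would read off the two auxiliary statements. Unwinding the definition of the localization operator gives
\begin{equation*}
\mathcal{A}^{\varphi,\varphi}_{\chi_\Omega}(\psi) = \int_\Omega V_\varphi \psi(z)\, \pi(z) \varphi \ dz,
\end{equation*}
interpreted weakly via \eqref{eq:weaklocop}. Since $\mathcal{A}^{\varphi,\varphi}_{\chi_\Omega}$ is a bounded bijection on the Hilbert space $L^2(\Rd)$, the open mapping theorem supplies constants $C, D > 0$ independent of $\psi$ such that
\begin{equation*}
C \cdot \|\psi\|_{L^2} \leq \left\| \int_\Omega V_\varphi \psi(z)\, \pi(z) \varphi \ dz \right\|_{L^2} \leq D \cdot \|\psi\|_{L^2},
\end{equation*}
namely $D = \|\mathcal{A}^{\varphi,\varphi}_{\chi_\Omega}\|_{\bo}$ and $C = \|(\mathcal{A}^{\varphi,\varphi}_{\chi_\Omega})^{-1}\|_{\bo}^{-1}$. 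Unique determination of $\psi$ by $V_\varphi \psi\big|_\Omega$ then follows from injectivity: if $V_\varphi \psi_1 = V_\varphi \psi_2$ on $\Omega$, linearity of $V_\varphi$ and of the integral give $\mathcal{A}^{\varphi,\varphi}_{\chi_\Omega}(\psi_1 - \psi_2) = 0$, hence $\psi_1 = \psi_2$.

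There is no real obstacle here; the theorem is essentially a direct corollary of Proposition \ref{prop:isomorphisms}. The only mild subtlety is noting that the Tauberian hypothesis $(iii)$ is automatic for $a \in L^1(\Rdd)$, so that no genuine Tauberian machinery needs to be invoked in this corollary, only the Riesz-theoretic injectivity-implies-surjectivity step already built into Proposition \ref{prop:isomorphisms}.
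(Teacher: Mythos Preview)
Your approach is exactly the paper's: apply Proposition \ref{prop:isomorphisms} with $M=1$, $a=-\chi_{\Omega^c}$, $\Delta=\Omega^c$, noting that $a\in L^1(\Rdd)$ makes condition $(iii)$ automatic, and then read off the norm inequalities and uniqueness from boundedness and bijectivity of $\mathcal{A}^{\varphi,\varphi}_{\chi_\Omega}$. One small correction: the inclusion ``$L^1(\Rdd)\subset C_0(\Rdd)$'' you wrote is false as stated; what actually makes $(iii)$ immediate is Remark \ref{rem:sufficientcompact} (so $a\star S\in\compacts$ for any $S\in\mathcal{W}$, giving condition $(i)$ of Theorem \ref{thm:tauberian1} directly), or alternatively that $a\ast g$ is uniformly continuous and in $L^1$, hence in $C_0$---but not because $L^1\subset C_0$.
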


\begin{proof}
This is a special case of Proposition \ref{prop:isomorphisms} with $M=1$ and $a=-\chi_{\Omega^c}$. Then $f=1-\chi_{\Omega^c}=\chi_{\Omega}$, and one easily checks that the conditions in the proposition are satisfied with $\Delta=\Omega^c$, in particular $(iii)$ follows as $\chi_{\Omega^c}\in L^1(\Rdd).$ 
\end{proof}

\begin{rem}
\begin{enumerate}
   \item After submitting this paper for publication, we were made aware that stronger versions of this result by different approaches exist in the literature, see \cite{Fernandez:2010} and references therein. To our knowledge the strongest of these results is \cite{Fernandez:2010}, where it is shown that Theorem \ref{thm:iso} holds if $\chi_{\Omega^c}$ satisfies assumption (i) or (ii) of Theorem \ref{thm:tauberian1}. It follows that this assumption is sufficient in part (1) of Proposition \ref{prop:BFisomorphism} as well.  
    \item Theorem \ref{thm:iso} is an example of turning uncertainty principles into signal recovery results, as proposed by Donoho and Stark \cite{Donoho:1989}. An alternative proof more in line with the methods of \cite{Donoho:1989} could be obtained by showing that $\|\mathcal{A}^{\varphi,\varphi}_{\chi_{\Omega^c}}\|_{\bo}<1$ using \cite{Janssen:1998a}, and using a Neumann series argument to deduce the invertibility of $\mathcal{A}^{\varphi,\varphi}_{\chi_{\Omega}}=I_{L
   ^2}-\mathcal{A}^{\varphi,\varphi}_{\chi_{\Omega^c}}$.
    \item If $\varphi$ belongs to \textit{Feichtinger's algebra} $M^1(\Rd)$\cite{Feichtinger:1981,Grochenig:2001}, then invertibility of $\mathcal{A}^{\varphi,\varphi}_f$ on $L^2(\Rd)$ implies that $\mathcal{A}^{\varphi,\varphi}_f$ is also invertible on all \textit{modulation spaces} $M^{p,q}(\Rd)$ for $1\leq p,q \leq \infty$ (see \cite{Grochenig:2001} for an introduction to modulation spaces). This follows by combining \cite[Thm. 3.2]{Cordero:2003} and \cite[Cor. 4.7]{Grochenig:2006}.
\end{enumerate}
	
\end{rem}
\subsubsection{Isomorphism results for $\mathcal{F}^2_n(\C^d)$}
Any Toeplitz operator $T^{\mathcal{F}^2_n}_F$ on polyanalytic Bargmann-Fock space is unitarily equivalent to a localization operator $\mathcal{A}^{\varphi_n,\varphi_n}_f$ by Proposition \ref{prop:uneqpolyana}, where $f\in L^\infty(\Rdd)$ and $F\in L^\infty(\C^d)$ are related by $$F(x+i\omega)=f(x,-\omega).$$ Hence the results of this section may be translated into results for Toeplitz operators on $\mathcal{F}_n^{2}(\C^d).$ We include a couple of such results in the next statement. One may of course obtain isomorphism results for Gabor spaces in the same way by using Proposition \ref{prop:locoptoeplitz}.

\BFisomorphism
\begin{proof}
	In light of Proposition \ref{prop:uneqpolyana}, the first part follows from Theorem \ref{thm:iso} and the second from Remark \ref{rem:iso}.
\end{proof}

\section{A Tauberian theorem for bounded operators} %\label{sec:tauberian2}

A guiding principle in the theory of quantum harmonic analysis is that the role of functions and operators may often be interchanged in theorems. It should therefore come as no surprise that we can prove a Tauberian theorem where the bounded function $f$ from Theorem \ref{thm:tauberian1} is replaced by a bounded operator $R$, with just a few modifications of the proof.
\tauberianoperator

\begin{proof}
	The equivalence of the assumptions is proved in a similar way as for Theorem \ref{thm:tauberian1}: for $(i)\implies (ii)$ pick $a=S\star S$, and for $(ii)\implies (i)$ pick $S=a\star T$ for any $T\in \mathcal{W}$. 
	
Then assume that $(i)$ holds with $A=0$, the extension to $A\neq 0$ is done as in the proof of Theorem \ref{thm:tauberian1}. To show $(1)$, one proceeds as in the proof of Theorem \ref{thm:tauberian1} by first showing that $S\star T\in C_0(\Rdd)$ if $T=r\star S$ for some $r\in L^1(\Rdd)$. Using Theorem \ref{thm:wernerapproximation} one has that any $T\in \tco$ is the limit in the norm of $\tco$ of a sequence $r_n\star S$ with $r_n\in L^1(\Rdd)$. The proof is completed by showing that the sequence $R\star (r_n\star S)$ -- which is a sequence of functions in $C_0(\Rdd)$ -- converges uniformly to $R\star T$. Since $C_0(\Rdd)$ is closed under uniform limits, this implies $(1)$.

The proof that $(i)$ implies $(2)$ follows the same pattern. First show it for $g=T\star S$ for some $T\in \tco$, then extend to all $g$ by density, since Theorem \ref{thm:wernerapproximation} implies that any $g\in L ^1(\Rdd)$ is the limit of a sequence $T_n\star S$ for $T_n\in \tco.$
\end{proof}

\begin{rem} %\label{rem:tauberian2nec}
	The conditions on the Fourier transforms of $S$ and $a$ in $(i)$ and $(ii)$ are necessary to imply $(1)$ and $(2)$, as can be shown by picking $R=\pi(z_0)$ for $z_0=(x_0,\omega_0)\in \Rdd$. A calculation from the definitions \eqref{eq:defconvopop} and \eqref{eq:deffourierwigner} shows that $$\left[\pi(z_0)\star S\right](z)=e^{2\pi i \sigma(z_0,z)}e^{\pi i x_0 \cdot \omega_0}\F_W(S)(z_0).$$ So if $\F_W(S)(z_0)=0$, we get that $\pi(z_0)\star S=0\in C_0(\Rdd)$. On the other hand we may consider $\varphi_0 \otimes \varphi_0$. By Example \ref{exmp:gaussianfw}, we get that $$\left[\pi(z_0)\star (\varphi_0 \otimes \varphi)\right](z)=e^{2\pi i \sigma(z_0,z)}e^{\pi i x_0 \cdot \omega_0}e^{-\pi z_0^2}\notin C_0(\Rdd).$$ Hence the condition in $(i)$ is necessary. To show that the condition on $a$ in $(ii)$ is necessary one uses a similar argument and the fact that $$\pi(z_0)\star a = \F_\sigma(a)(z_0) \pi(z_0),$$ as a calculation shows. 
\end{rem}

From Lemmas \ref{lem:compactL0} and \ref{lem:convolutionswithidentity} it is clear that $(i)$ and $(ii)$ are satisfied if $R=A\cdot I_{L^2}  + K$ for some compact operator $K$. However, these are not the only examples.

\begin{exmp} \label{exmp:operatorpseudomeasures}
 If $R\in \bo$ satisfies that $\F_W(R)\in L^\infty(\Rdd)$, then $R$ satisfies assumption $(ii)$ of Theorem \ref{thm:tauberian2} with $A=0$ -- such $R$ are the operator-analogues of the pseudomeasures considered in Example \ref{exmp:tauberian1}. To prove this, let $S=\varphi_0 \otimes \varphi_0$. Then $\F_W(S)(z)=e^{-\pi |z|^2}$, so $S\in \mathcal{W}$, and $$\F_\sigma(R\star S)=\F_W(R)\cdot \F_W(S)\in L^1(\Rdd).$$ By Fourier inversion we have $$R\star S=\F_\sigma(\F_W(R)\cdot \F_W(S)),$$ 
			which belongs to $C_0(\Rdd)$ by the Riemann-Lebesgue lemma. 
			
An example of such $R$ is $R=P$, the parity operator. One can show that $\F_W(P)(z)=2^d$ for any $z\in \Rdd$, hence $P$ is a non-compact operator satisfying assumption $(ii)$ of Theorem \ref{thm:tauberian2} with $A=0$. We will return to this and other examples below. 
\end{exmp}

\subsection{Pitt improvements and characterizing compactness using Berezin transforms}  \label{sec:pittoperator}

As we saw in Theorem \ref{thm:pitt}, Pitt's classical theorem gives a condition on $f\in L^\infty(\Rdd)$ that ensures that $$f\ast g\in C_0(\Rdd)\text{ for } g\in W(\Rdd) \implies f\in C_0(\Rdd).$$

In particular, we noted that this is true if $f$ is uniformly continuous. To generalize this statement to operators $R\in \bo$, recall that $f\in L^\infty(\Rdd)$ is uniformly continuous if and only if $z\mapsto T_z(f)$ is continuous map from $\Rdd$ to $L^\infty(\Rdd)$. Hence a natural analogue of the uniformly continuous functions is the set $$\mathcal{C}_1 :=\{R\in \bo: z\mapsto \alpha_z(R) \text{ is continuous from } \Rdd \text{ to } \bo\};$$ this heuristic was also followed by Werner \cite{Werner:1984} and Bekka \cite{Bekka:1990}. With this in mind, the following result from \cite{Werner:1984} is an analogue of Pitt's theorem for operators.

\wernerpitt
\begin{proof}
	That the first statement implies the other two is Lemma \ref{lem:compactL0}. That the other statements imply the first follows from the theory of \textit{corresponding subspaces} developed by Werner in \cite{Werner:1984}, more precisely from \cite[Thm. 4.1 (3)]{Werner:1984}. In the notation of \cite{Werner:1984} we have picked $\mathcal{D}_0=C_0(\Rdd)$ and $\mathcal{D}_1=\compacts$.
\end{proof}

We then try to gain a better understanding of the elements of $\mathcal{C}_1.$
\begin{lem} The following set inclusion and equality hold:
	\begin{equation} \label{eq:toeplitzareuc}
	L^\infty(\Rdd)\star \tco \subset L^1(\Rdd)\star \bo = \mathcal{C}_1.
\end{equation}
\end{lem}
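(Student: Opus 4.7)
My plan is to establish the three inclusions separately. The first forward inclusion $L^1(\Rdd) \star \bo \subset \mathcal{C}_1$ is straightforward: a change of variables in the defining integral yields $\alpha_z(g \star R) = (T_z g) \star R$, whereupon Young's inequality (Proposition \ref{prop:young} with $p=1, q=\infty$) gives
\[
\|\alpha_z(g \star R) - \alpha_{z_0}(g \star R)\|_{\bo} \leq \|T_z g - T_{z_0} g\|_{L^1}\,\|R\|_{\bo},
\]
and the right-hand side tends to zero by the standard strong continuity of translation on $L^1(\Rdd)$. The other forward inclusion $L^\infty(\Rdd)\star \tco \subset \mathcal{C}_1$ is similar: here one uses $\alpha_z(f \star S) = f \star \alpha_z(S)$ together with
\[
\|\alpha_z(f\star S) - \alpha_{z_0}(f \star S)\|_{\bo} \leq \|f\|_{L^\infty}\,\|\alpha_z(S)-\alpha_{z_0}(S)\|_{\tco},
\]
the right-hand side vanishing because $z \mapsto \alpha_z(S)$ is continuous in trace norm (reduce to rank-one $S = \psi \otimes \phi$ by density, using that $\alpha_z$ is a $\tco$-isometry and that $z \mapsto \pi(z)\psi$ is continuous on $L^2$).

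The genuine content is the reverse inclusion $\mathcal{C}_1 \subset L^1(\Rdd) \star \bo$, which I plan to deduce from the Cohen--Hewitt factorization theorem. Observe that $(g,R) \mapsto g \star R$ turns $\bo$ into a Banach left module over the commutative Banach algebra $L^1(\Rdd)$: associativity $(g \ast h) \star R = g \star (h \star R)$ extends from Section \ref{sec:convolutions} to $R \in \bo$ by pairing both sides with a $T \in \tco$ and invoking the associativity already established for trace class operators. Moreover $L^1(\Rdd)$ carries a bounded approximate identity $(g_n)$, for instance non-negative $L^1$-normalized mollifiers concentrating at the origin. Cohen--Hewitt then asserts that $L^1(\Rdd) \star \bo$ is already norm-closed in $\bo$ and coincides with its essential submodule $\{R \in \bo : g_n \star R \to R \text{ in operator norm}\}$. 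The final step is to verify $\mathcal{C}_1$ lies in this essential submodule, which is a short estimate: for $R \in \mathcal{C}_1$,
\[
\|g_n \star R - R\|_{\bo} \leq \int_{\Rdd} g_n(z)\, \|\alpha_z(R) - R\|_{\bo}\, dz \to 0,
\]
since $g_n$ concentrates at $z=0$ and $\|\alpha_z(R) - R\|_{\bo}\to 0$ as $z \to 0$ by definition of $\mathcal{C}_1$.

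The hard part is this appeal to Cohen--Hewitt; both forward inclusions reduce to Young's inequality plus continuity of translation. Once Cohen--Hewitt delivers the identification $L^1(\Rdd)\star \bo = \mathcal{C}_1$, the chain $L^\infty\star \tco \subset \mathcal{C}_1 = L^1\star \bo$ follows from the second forward calculation and the proof is complete. A secondary technicality to be careful about is the extension of associativity to $R \in \bo$, but this is routine via duality with $\tco$ and does not require additional theory.
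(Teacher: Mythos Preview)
Your argument is correct, but it is organised differently from the paper's.

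The paper does not prove the equality $L^1(\Rdd)\star \bo = \mathcal{C}_1$ from scratch; it simply cites \cite[Prop.~4.5]{Luef:2018c}, whose proof is essentially the Cohen--Hewitt argument you wrote out. So on that half you have unpacked a black-box reference, which is fine and arguably more informative.

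For the inclusion $L^\infty(\Rdd)\star \tco \subset L^1(\Rdd)\star \bo$, the paper takes a different route: given $f\star S$ with $f\in L^\infty(\Rdd)$ and $S\in \tco$, it invokes the factorisation $\tco = L^1(\Rdd)\star \tco$ (again a Cohen--Hewitt consequence, cited as \cite[Prop.~7.4]{Luef:2018c}) to write $S=g\star T$ with $g\in L^1(\Rdd)$, $T\in \tco$, and then uses associativity and commutativity to rearrange $f\star (g\star T)=g\star(f\star T)\in L^1(\Rdd)\star \bo$. Your route instead shows $L^\infty(\Rdd)\star \tco \subset \mathcal{C}_1$ directly via the norm-continuity of $z\mapsto \alpha_z(S)$ on $\tco$, and then invokes the equality. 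Both arguments are short and ultimately rest on Cohen--Hewitt; the paper's trick avoids verifying the covariance identity $\alpha_z(f\star S)=f\star \alpha_z(S)$ for the duality-defined convolution, at the cost of an extra factorisation citation, while yours is more self-contained but needs that identity (which you did justify).
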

\begin{proof}
	The equality $\mathcal{C}_1=L^1(\Rdd)\star \bo$ is \cite[Prop. 4.5]{Luef:2018c}. Then assume $R=f\star S$ for $f\in L^\infty(\Rdd)$ and $S\in \tco$. By \cite[Prop. 7.4]{Luef:2018c} there must exist $g\in L^1(\Rdd)$ and $T\in \tco$ such that $S=g\star T.$ It follows by associativity and commutativity of convolutions that we have $R=f\star (g\star T)=g\star (f\star T).$ Since $f\star T\in \bo$ by Proposition \ref{prop:young}, it follows that $R\in L^1(\Rdd)\star \bo$. 
\end{proof}

Furthermore, it is not difficult to see that $\mathcal{C}_1$ equipped with the operator norm is a Banach algebra. Hence it must contain the Banach algebra generated by elements of the form $f\star T$ for $f\in L^\infty(\Rdd)$ and $T\in \tco,$ and Proposition \ref{thm:wernerpitt} applies to operators in this Banach algebra.

This allows us to apply the results above to characterizing compactness of Toeplitz operators by their Berezin transform, a much-studied question going back to results of Axler and Zheng \cite{Axler:1998} for the so-called Bergman space, and soon after Engli\v{s} \cite{Englis:1999} for the Bargmann-Fock space $\mathcal{F}^2(\C^d)$. The central question is whether a Toeplitz operator on a reproducing kernel Hilbert space must be compact if its Berezin transform vanishes at infinity -- see Section 4 of \cite{Bauer:2019} for an overview over results of this nature in the literature. We will use Proposition \ref{thm:wernerpitt} to reprove the main result of \cite{Bauer:2012} for $\mathcal{F}^2(\C^d)$ and extend it to a class of Gabor spaces, but we hasten to add that the method of proving the results of \cite{Bauer:2012} using the results of \cite{Werner:1984} was already noted recently by Fulsche \cite{Fulsche:2019}.
Before the proof, recall the linear and multiplicative isometric isomorphism $\Theta^\varphi:\mathcal{L}(V_\varphi(L^2)) \to \bo$ from \eqref{eq:theta}, which satisfies that $\Theta^\varphi(T^\varphi_f)=\mathcal{A}^{\varphi,\varphi}_{f}$ and $\mathfrak{B}^\varphi \tilde{T}=\Theta^\varphi(\tilde{T})\star (\check{\varphi}\otimes \check{\varphi})$.

\axlerzheng

\begin{proof}
First note that the assumption on $V_\varphi \varphi$ means that $\varphi\otimes \varphi \in \mathcal{W}$ by Lemma \ref{lem:rankonecase}, and as a simple calculation shows that $\F_W(\check{\varphi}\otimes \check{\varphi})(z)=\F_W(\varphi\otimes \varphi)(-z)$ it also means that $\check{\varphi}\otimes \check{\varphi} \in \mathcal{W}$.
 To see that the first statement implies the second, note that $\Theta^\varphi(\tilde{T})$ is compact if and only if $\tilde{T}$ is, so $$\mathfrak{B}^\varphi \tilde{T} = \Theta^\varphi(\tilde{T})\star (\check{\varphi}\otimes \check{\varphi})\in C_0(\Rdd)$$ by Lemma \ref{lem:compactL0}. For the other direction, it is clear by the properties of $\Theta^\varphi$ that it maps $\mathcal{T}^\varphi$ into the Banach algebra generated by localization operators $\mathcal{A}^{\varphi,\varphi}_f=f\star(\varphi\otimes \varphi)$ for $f\in L^\infty(\Rdd)$. In particular, $\Theta^\varphi(\mathcal{T}^\varphi)\subset \mathcal{C}_1$ by \eqref{eq:toeplitzareuc} as $\mathcal{C}_1$ is a Banach algebra containing $\mathcal{A}^{\varphi,\varphi}_f$ for all $f\in L^\infty(\Rdd)$. Since $\mathfrak{B}^\varphi \tilde{T} = \Theta^\varphi(\tilde{T})\star (\check{\varphi}\otimes \check{\varphi})\in C_0(\Rdd)$ and $\check{\varphi}\otimes \check{\varphi}\in \mathcal{W}$ by assumption, Proposition \ref{thm:wernerpitt} gives that $\Theta^\varphi(\tilde{T})$ is compact, hence $\tilde{T}$ is compact as $\Theta^\varphi$ is a unitary equivalence by definition. 
 
 The last statement follows from Theorem \ref{thm:pitt}, as $T_{f}^\varphi$ is compact if and only if $\Theta^\varphi(T^\varphi_f)=\mathcal{A}^{\varphi,\varphi}_{f}=f\star (\varphi\otimes \varphi)$ is compact, and $\varphi\otimes \varphi\in \mathcal{W}.$
\end{proof}
\begin{rem}
Similar techniques have also recently been used by Hagger \cite{Hagger:2020} to give a characterization of some generalizations of $\mathcal{T}^\varphi$. 
\end{rem}
There are several examples of $\varphi$ satisfying that $V_\varphi \varphi$ has no zeros, which by the proposition gives examples of reproducing kernel Hilbert spaces $V_\varphi (L^2)$ such that Toeplitz operators are compact if and only if their Berezin transform vanishes at infinity. One example is the one-sided exponential $\varphi(t)=\chi_{[0,\infty)}(t) e^{-t}$ for $t\in \R$ considered by Janssen \cite{Janssen:1996}, and new examples were recently explored in \cite{Grochenig:2019}.

Essentially the same argument as for Theorem \ref{thm:axlerzheng}, only replacing $\Theta^\varphi$ by the map $\Theta^{\mathcal{F}^2}:\mathcal{L}(\mathcal{F}^2(\C^d))\to \bo$ defined by $\Theta^{\mathcal{F}^2}(\tilde{T})=\mathcal{B}^*\tilde{T}\mathcal{B}$, 
 gives a Bargmann-Fock space result from \cite{Bauer:2012}. For this to work, it is important that $\varphi_0\otimes \varphi_0 \in \mathcal{W}$, since Proposition \ref{prop:unitarilyequivalent} and Lemma \ref{lem:berezinfock} relate the Bargmann-Fock setting to convolutions with $\varphi_0\otimes \varphi_0.$ The definition of slowly oscillating functions on $\Rdd$ given after that theorem is adapted to $\mathbb{C}^d$ in an obvious way.

\bauerisralowitz

\begin{rem}
The last remark on slowly oscillating functions is, to our knowledge, a new contribution, and follows from Theorem \ref{thm:pitt}. However, we mention that there exist other results relating the behaviour of $F$ and $\mathcal{B}
^{\mathcal{F}^2}T_F^{\mathcal{F}^2}$  to the essential spectrum and Fredholmness of $T^{\mathcal{F}^2}_F$, also for classes of $F$ defined in terms of the oscillation\cite{Berger:1987,Al-Qabani:2018,Fulsche:2019hagger,Stroethoff:1992}. For instance, \cite[Thm. 33]{Fulsche:2019hagger} implies that slow oscillation could be replaced by \textit{vanishing oscillation} (see \cite{Fulsche:2019hagger} for the definition) in the theorem above, which is weaker as functions of vanishing oscillation are bounded and uniformly continuous.
\end{rem}
By Lemma \ref{lem:berezinfock} we immediately obtain the following compactness criterion.
\toeplitzchar

\begin{rem}
	One could also define the Berezin transform for Toeplitz operators on polyanalytic Bargmann-Fock spaces and relate it to convolutions with $\varphi_n\otimes \varphi_n$. However, we would not be able to apply Proposition \ref{thm:wernerpitt} to this case, as $V_{\varphi_n} \varphi_n$ always has zeros for $n\neq 0$.
\end{rem}

Finally, we note that Theorem \ref{thm:wernerpitt} gives a simple condition for compactness of localization operators in terms of the Gaussian $\varphi_0$.

\localizationchar
\begin{proof}
	Recall that $\mathcal{A}^{\psi_1,\psi_2}_f=f\star (\psi_2 \otimes \psi_1)$, so $\mathcal{A}^{\psi_1,\psi_2}_f \in \mathcal{C}_1$ by  \eqref{eq:toeplitzareuc}. Since $\varphi_0\otimes \varphi_0 \in \mathcal{W}$ by Example \ref{exmp:gaussianfw},  Proposition \ref{thm:wernerpitt} gives that $f\star (\psi_2 \otimes \psi_1)$ is compact if and only if $\left[f\star (\psi_2 \otimes \psi_1)\right]\star (\varphi_0 \otimes \varphi_0)\in C_0(\Rdd)$. The result therefore follows by
	\begin{align*}
  \left[f\star (\psi_2 \otimes \psi_1)\right]\star (\varphi_0 \otimes \varphi_0)&=f\ast \left[(\psi_2 \otimes \psi_1) \star (\varphi_0 \otimes \varphi_0)\right] \quad \text{ by associativity} \\
  &= f\ast (V_{\varphi_0}\psi_2\overline{V_{\varphi_0}\psi_1}) \quad \text{ by Lemma \ref{lem:rankonecase} as } \check{\varphi_0}=\varphi_0.
\end{align*}

\end{proof}

In a sense, this result complements Theorem \ref{thm:fernandezgalbis}. Theorem \ref{thm:fernandezgalbis} characterized those $f$ such that $\mathcal{A}^{\varphi_1,\varphi_2}_f=f\star (\varphi_2 \otimes \varphi_1)$ is compact for all non-zero windows $\varphi_1,\varphi_2 \in L^2(\Rd)$. Proposition \ref{prop:charcompact} gives a characterization of compactness of $\mathcal{A}^{\psi_1,\psi_2}_f$ for a particular pair of windows $\psi_1,\psi_2$. Of course, when $$\F_W(\psi_2 \otimes \psi_1)(x,\omega)=e^{i\pi x \cdot \omega}V_{\psi_1}\psi_2(x,\omega)$$ has no zeros, compactness of $\mathcal{A}^{\psi_1,\psi_2}_f$ implies compactness of $\mathcal{A}^{\varphi_1,\varphi_2}_f$ for all windows $\varphi_1,\varphi_2$ by picking $S=\psi_2\otimes \psi_1$ and $A=0$ in Theorem \ref{thm:tauberian1}.

\section{Quantization schemes and Cohen's class}% \label{sec:quantcohen}

The perspective of \cite{Luef:2018b} is that any $R\in \bo$ defines both a quantization scheme and a time-frequency distribution. The quantization scheme associated with $R$ -- by which we simply mean a map sending functions on phase space $\Rdd$ to operators on $L^2(\Rd)$ -- is given by \begin{equation*} 
  f\mapsto f\star R \quad \text{ for } f\in L^1(\Rdd).
\end{equation*}

The time-frequency distribution $Q_R$ associated with $R$ is given by sending $\psi \in L^2(\Rd)$ to its time-frequency distribution $$Q_R(\psi)(z) = [(\psi \otimes \psi) \star \check{R}](z) \quad \text{ for } z\in \Rdd.$$ Recall that a quadratic time-frequency distribution $Q$ is said to be of Cohen's class if there is some $a\in \mathscr{S}'(\Rdd)$ such that
\begin{equation} \label{eq:cohenclassfunctions}
  Q(\psi)=a\ast W(\psi, \psi) \quad \text{ for all } \psi\in \mathscr{S}(\Rd).
\end{equation}
The distribution $Q_R$ is of Cohen's class as \eqref{eq:opconvweyl} implies that
\begin{equation}\label{eq:cohenclassweyl}
  Q_R(\psi)= \weyl_{\check{R}}\ast W(\psi,\psi),
\end{equation}
 where $\weyl_{\check{R}}$ is the Weyl symbol of $\check{R}$. Using Theorem \ref{thm:tauberian2}, we deduce the following result relating compactness of the quantization scheme of $R$ to $C_0(\Rdd)$ membership of $Q_R$.

\quantization

\begin{proof}
	 Since $Q_R(\psi)(z)=\check{R}\star (\psi \otimes \psi)$ and $\F_W(\varphi\otimes \varphi)(x,\omega)=e^{i\pi x\cdot \omega}V_\varphi \varphi(x,\omega)$, it follows from Theorem \ref{thm:tauberian2} with $A=0$ that $(i)\iff (iii)$ and $(ii)\iff (iv).$ A short calculation shows that $R\star (\psi \otimes \psi)(z) = Q_R(\check{\psi})(-z).$ Since $\psi \mapsto \check{\psi}$ is a bijection on $L^2(\Rd)$, it follows that $(iii)$ is equivalent to $$(iii')\  R\star (\psi \otimes \psi)\in C_0(\Rdd) \ \text{ for all $\psi \in L^2(\Rd)$.}$$ By Theorem \ref{thm:tauberian2}, $(iii')\iff (iv)$, which finishes the proof. 
\end{proof}
\begin{rem}
\begin{enumerate}
	\item By the remark following Theorem \ref{thm:tauberian2}, the conditions on $\varphi$ in $(i)$ and $g$ in $(ii)$ are also necessary to imply $(iii)$ and $(iv)$. 
	\item One advantage of using the operator convolutions to describe Cohen's class is that $\psi\otimes \psi \in \tco$ for any $\psi \in L^2(\Rd)$, so as long as $R$ is a bounded operator we may exploit results on $\bo\star \tco$ to study $Q_R(\psi)=\check{R}\star (\psi \otimes \psi)$. If we had used the description of Cohen's class using functions in \eqref{eq:cohenclassfunctions}, one could similarly hope that $W(\psi,\psi)\in L^1(\Rdd)$, so that picking $a\in L^\infty(\Rdd)$ allows us to study $Q(\psi)=a\ast W(\psi,\psi)$ as convolutions of bounded and integrable functions. Unfortunately, $W(\psi,\psi)\in L^1(\Rdd)$ if and only if $\psi$ belongs to a proper subspace of $L^2(\Rd)$ called \textit{Feichtinger's algebra} \cite{Feichtinger:1981}. Hence this approach fails in general.
\end{enumerate}

\end{rem}
The gist of the above proposition is that $(i)$ provides a simple test for checking whether $(iii)$ and $(iv)$ hold. A typical choice for $\varphi$ in $(i)$ would be the Gaussian $\varphi = \varphi_0$, then $Q_R(\varphi_0)$ is the so-called \textit{Husimi function} of $R$. Hence the quantization $f\star R$ of any $f\in L^1(\Rdd)$ is compact and $Q_R(\psi)\in C_0(\Rdd)$ for any $\psi \in L^2(\Rd)$ if and only if the Husimi function of $R$ belongs to $C_0(\Rdd).$
\subsubsection{$\tau$-Wigner distributions}% \label{sec:tau}

For $\tau\in [0,1]$, define
\begin{equation*} %\label{eq:tauweyl}
	a_{\tau}(x,\omega)=\begin{cases}
		 \frac{2^d}{|2\tau-1|^d}\cdot e^{2\pi i \frac{2}{2\tau -1}x\cdot \omega}\quad &\text{ if } \tau \neq \frac{1}{2}, \\
		 \delta_0 \quad &\text{ if } \tau=\frac{1}{2},
	\end{cases}
\end{equation*}
where $\delta_0$ is Dirac's delta distribution. A slightly tedious calculation using the definition \eqref{eq:weyldef} shows that the Weyl transform $S_\tau$ of $a_{\tau}$ is given for $\psi\in \mathscr{S}(\Rd)$ by 
\begin{equation*}
  S_\tau(\psi)(t)=\begin{cases} \frac{1}{(1-\tau)^d}\psi\left(\frac{\tau}{\tau-1}\cdot t\right) \quad &\text{ if } \tau \in (0,1), \\
  	\psi(0) \quad &\text{ if } \tau = 0, \\
  	\int_{\Rd}\psi(t) \ dt \cdot \delta_0 \quad &\text{ if } \tau = 1,
   \end{cases} 
\end{equation*}
as already noted for $d=1$ in \cite[Thm. 7.2]{Koczor:2018}. If $\tau \in (0,1)$, it is easy to check that $S_\tau$ is bounded on $L^2(\Rd)$ with $\|S_\tau\|_{\bo}=\frac{1}{(1-\tau)^{d/2}\tau^{d/2}},$ that $S_\tau^* = S_{1-\tau}$, $\check{S}_\tau=S_\tau$ and the inverse of $S_\tau$ is $\tau^d (1-\tau)^d S_{1-\tau}$. In particular, $S_\tau$ is not compact. 

In light of \eqref{eq:cohenclassweyl}, \cite[Prop. 5.6]{Boggiatto:2010} states that $Q_{S_\tau}(\psi)$ is the $\tau$-Wigner distribution $W_\tau(\psi)$ introduced in \cite{Boggiatto:2010}, given explicitly by $$Q_{S_\tau}(\psi)(z)=W_\tau(\psi)(z):=\int_{\Rd}e^{-2\pi i t\cdot \omega}\psi(x+\tau t)\overline{\psi(x-(1-\tau)t)} dt.$$

On the other hand, we easily find for $f\in L^1(\Rdd)$ and $\psi\in \mathscr{S}(\Rd)$ that
\begin{align*}
	\inner{(f\star S_{1-\tau})\psi}{\psi}_{L^2}&=\left[(f\star S_{1-\tau})\star (\psi \otimes \psi)\right](0) \\
	&= \left[f\ast (S_{1-\tau}\star (\psi \otimes \psi))\right](0) \\
	&= \int_{\Rdd} f(z) S_{1-\tau}\star (\psi \otimes \psi) \ dz \\
	&= \int_{\Rdd} f(z) W_{1-\tau}(\psi)(z) \ dz \\
	&= \inner{f}{W_{\tau}(\psi)}_{L^2(\Rdd)}.
\end{align*}
In the last line we use that $\overline{Q_S(\psi)}=Q_{S^*}(\psi)$ for $S\in \bo$, and $S_\tau^*=S_{1-\tau}$. This shows precisely that $f\star S_{1-\tau}$ satisfies the definition of the $\tau$-Weyl quantization of $f$ introduced by Shubin \cite{Shubin:1987} -- in the notation of \cite{Boggiatto:2010} we have that $$f\star S_{1-\tau}=W^f_\tau.$$

The case $\tau=1/2$ is of particular interest, as $S_{1/2}=S_{1-1/2}=2^dP$ -- a scalar multiple of the parity operator. This case corresponds to the Weyl calculus, in the sense that $Q_{2^d P}(\psi)=W(\psi,\psi)$ for $\psi\in L^2(\Rd)$ and $f\star (2^d P)$ is the Weyl transform of $f$ for $f\in L^1(\Rdd).$

We can now show that the $\tau$-Wigner theory and the non-compact operators $S_\tau$ give a family of non-trivial examples to Theorem \ref{thm:tauberian2}. The compactness part of the next result was also noted using different methods in \cite[Thm. 6.9]{Boggiatto:2010}.

\begin{prop}
	Let $\tau \in (0,1)$. Then $S_\tau$ satisfies condition $(i)$ of Proposition \ref{prop:quantization}, hence
	\begin{enumerate}
		\item $W_\tau(\psi)=Q_{S_\tau}(\psi) \in C_0(\Rdd)$ for any $\psi \in L^2(\Rd)$.
		\item $W^f_\tau=f\star S_{1-\tau}$ is a compact operator on $L^2(\Rd)$ for any $f\in L^1(\Rdd).$
	\end{enumerate}
\end{prop}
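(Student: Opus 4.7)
The plan is to verify condition $(i)$ of Proposition \ref{prop:quantization} for $R=S_\tau$ by taking the natural test window $\varphi=\varphi_0$, and then invoking Proposition \ref{prop:quantization} to obtain both conclusions $(1)$ and $(2)$ at once.

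First I would note that $\varphi_0$ qualifies as a test function in $(i)$: by equation \eqref{eq:stftgauss} the short-time Fourier transform $V_{\varphi_0}\varphi_0(z)=e^{-\pi i x\cdot\omega}e^{-\pi|z|^2/2}$ has no zeros. It then suffices to show that $Q_{S_\tau}(\varphi_0)=W_\tau(\varphi_0)\in C_0(\Rdd)$. Using the explicit definition
\begin{equation*}
W_\tau(\varphi_0)(x,\omega)=\int_{\Rd}e^{-2\pi i t\cdot\omega}\varphi_0(x+\tau t)\overline{\varphi_0(x-(1-\tau)t)}\,dt,
\end{equation*}
and writing the product of Gaussians, the exponent in $t$ reduces to $-\pi\alpha |t|^2-2\pi t\cdot((2\tau-1)x+i\omega)$, where $\alpha:=\tau^2+(1-\tau)^2>0$ for $\tau\in(0,1)$. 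The key step is then to complete the square in $t$ and evaluate the resulting Gaussian integral (or equivalently apply the inverse Fourier transform of a Gaussian). After collecting the terms involving $x$ and $\omega$ and simplifying the algebraic identity $2(2\tau-1)^2-4\alpha=-2$, one arrives at
\begin{equation*}
W_\tau(\varphi_0)(x,\omega)=c_\tau\,e^{-\pi|(x,\omega)|^2/\alpha}\cdot e^{2\pi i(2\tau-1)x\cdot\omega/\alpha}
\end{equation*}
for an explicit constant $c_\tau=(2/\alpha)^{d/2}$. In particular $|W_\tau(\varphi_0)|$ is a Gaussian on $\Rdd$, so $W_\tau(\varphi_0)\in C_0(\Rdd)$.

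This verifies condition $(i)$ of Proposition \ref{prop:quantization} for $R=S_\tau$. Applying the equivalence $(i)\Leftrightarrow (iii)$ yields conclusion $(1)$, namely that $W_\tau(\psi)=Q_{S_\tau}(\psi)\in C_0(\Rdd)$ for every $\psi\in L^2(\Rd)$. Since $\tau\in(0,1)$ was arbitrary, the same argument applies with $\tau$ replaced by $1-\tau$, so $S_{1-\tau}$ also satisfies $(i)$; applying the equivalence $(i)\Leftrightarrow (iv)$ to $R=S_{1-\tau}$ gives conclusion $(2)$, that $W_\tau^{f}=f\star S_{1-\tau}$ is compact on $L^2(\Rd)$ for every $f\in L^1(\Rdd)$.

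The only real calculation is the explicit Gaussian integration in the middle step; the cancellation that forces the coefficient of $|x|^2$ in the final exponent to be negative (and in fact equal to the coefficient of $|\omega|^2$) is the single point where one must be careful, but this is exactly the identity $2(2\tau-1)^2-4\alpha=-2$. No subtler issue is expected, since the rest is a direct appeal to Proposition \ref{prop:quantization}.
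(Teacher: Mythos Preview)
Your proof is correct and follows the same strategy as the paper: verify condition $(i)$ of Proposition \ref{prop:quantization} for $R=S_\tau$ with the Gaussian window $\varphi_0$, then invoke the proposition. The only difference is that the paper outsources the fact $W_\tau(\varphi_0)\in C_0(\Rdd)$ to \cite[Prop.~4.4]{Boggiatto:2010} while you carry out the Gaussian integration explicitly, and you are slightly more explicit than the paper in noting that conclusion $(2)$ requires applying the proposition to $S_{1-\tau}$ rather than $S_\tau$.
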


\begin{proof}
	Recall from Example \ref{exmp:gaussianfw} that $V_{\varphi_0}\varphi_0$ has no zeros. By \cite[Prop. 4.4]{Boggiatto:2010},
	\begin{equation*}
	    Q_{S_\tau}(\varphi_0)=W_\tau(\varphi_0)\in C_0(\Rdd)
	\end{equation*} for any $\tau\in [0,1]$. Hence $(i)$ in Proposition \ref{prop:quantization} is satisfied, and the result follows by $(iii)$ and $(iv)$ of the same proposition.
\end{proof}

In fact, the same proof shows that the functions $a_\tau\in L^\infty(\Rdd)$ for $\tau \neq 1/2$ are non-trivial examples of Theorem \ref{thm:tauberian1}, where non-trivial refers to the fact $\weyl_\tau \notin L^p(\Rdd)$ for $p=0$ or $1\leq p < \infty$.

\begin{prop}\label{prop:tauwienerexample}
	For $\tau\in [0,1]\setminus \left\{\frac{1}{2}\right\}$, $a_\tau$ satisfies the assumptions of Theorem \ref{thm:tauberian1} with $A=0$.
\end{prop}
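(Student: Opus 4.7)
The cleanest route is to verify the pseudomeasure criterion from Example \ref{exmp:tauberian1}(3): if $\F_\sigma(a_\tau)\in L^\infty(\Rdd)$, then $a_\tau$ automatically satisfies condition (ii) of Theorem \ref{thm:tauberian1} with $A=0$, and by the equivalence in that theorem also (i). Since $a_\tau$ is by construction the Weyl symbol of $S_\tau$, identity \eqref{eq:weyltransformfw} gives
\begin{equation*}
\F_\sigma(a_\tau)=\F_W(L_{a_\tau})=\F_W(S_\tau),
\end{equation*}
so the whole argument reduces to computing $\F_W(S_\tau)$ and observing that it is bounded.

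For $\tau\in(0,1)\setminus\{1/2\}$, I would compute $\F_W(S_\tau)$ directly from the definition \eqref{eq:deffourierwigner}. Using the explicit description $S_\tau\psi(t)=(1-\tau)^{-d}\psi\!\left(\tfrac{\tau}{\tau-1}t\right)$ given in the excerpt, the operator $\pi(-z)S_\tau$ has Schwartz kernel
\begin{equation*}
k(t,s) = e^{-2\pi i\omega\cdot t}(1-\tau)^{-d}\delta\!\left(s-\tfrac{\tau}{\tau-1}(t+x)\right),
\end{equation*}
and formally taking the trace (integrating $k(t,t)$) collapses the delta at $t=-\tau x$ with Jacobian $|1-\tau|^d/(1-\tau)^d$. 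This yields
\begin{equation*}
\tr(\pi(-z)S_\tau)=e^{2\pi i\tau\, x\cdot\omega},
\end{equation*}
and multiplying by the phase $e^{-\pi i x\cdot\omega}$ from \eqref{eq:deffourierwigner} gives $\F_W(S_\tau)(x,\omega)=e^{i\pi(2\tau-1)x\cdot\omega}$, a chirp of modulus $1$ everywhere. In particular $\F_\sigma(a_\tau)\in L^\infty(\Rdd)$, and Example \ref{exmp:tauberian1}(3) finishes the proof. The boundary cases $\tau\in\{0,1\}$ can be handled either by a limiting argument or by computing $\F_\sigma(a_\tau)$ directly as the symplectic Fourier transform of a chirp of the form $Ce^{\pm 4\pi i x\cdot\omega}$.

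The one subtlety I expect to be the main obstacle is rigor: $S_\tau$ is not trace class, so \eqref{eq:deffourierwigner} does not literally apply. One needs to invoke the extension of $\F_W$ to a bijection on $\mathscr{S}'(\Rdd)$ (mentioned just after \eqref{eq:deffourierwigner}) and interpret the kernel computation distributionally. An equally good alternative, bypassing this issue entirely, is to compute $\F_\sigma(a_\tau)$ as a tempered-distributional symplectic Fourier transform of the chirp $a_\tau(x,\omega)=\tfrac{2^d}{|2\tau-1|^d}\,e^{2\pi i\frac{2}{2\tau-1}x\cdot\omega}$; the $d$-dimensional $x$- and $\omega$-integrals decouple into Fourier integrals producing delta distributions, reproducing the same chirp of modulus $1$. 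Either way, the key output is $|\F_\sigma(a_\tau)|\equiv 1$, after which the pseudomeasure argument of Example \ref{exmp:tauberian1}(3) yields the conclusion with no further work.
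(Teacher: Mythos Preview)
Your argument is correct but takes a different route from the paper. The paper verifies condition $(ii)$ of Theorem \ref{thm:tauberian1} directly with the specific Wiener kernel $a=W(\varphi_0,\varphi_0)$: since $\F_\sigma W(\varphi_0,\varphi_0)=\F_W(\varphi_0\otimes\varphi_0)$ is a nonvanishing Gaussian, $W(\varphi_0,\varphi_0)\in W(\Rdd)$, and by \eqref{eq:cohenclassweyl} one has $a_\tau\ast W(\varphi_0,\varphi_0)=W_\tau(\varphi_0)$, which lies in $C_0(\Rdd)$ by the result of Boggiatto et al.\ already invoked in the preceding proposition. In other words, the paper simply recycles the $C_0$-statement for $W_\tau(\varphi_0)$ rather than computing anything new.

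Your approach instead shows $a_\tau$ is a pseudomeasure by computing $\F_\sigma(a_\tau)=\F_W(S_\tau)=e^{i\pi(2\tau-1)x\cdot\omega}$, and then appeals to Example \ref{exmp:tauberian1}(3). This is more self-contained (no external citation needed) and in fact yields slightly more information, namely the explicit form of $\F_W(S_\tau)$. The paper's route is shorter in context because it piggybacks on the just-proved Proposition about $S_\tau$ and avoids any distributional Fourier calculation. Your acknowledged subtlety about $S_\tau\notin\tco$ is real but harmless: the direct computation of $\F_\sigma(a_\tau)$ as a tempered distribution (your alternative) is entirely standard and covers $\tau\in\{0,1\}$ uniformly, so no limiting argument is needed there.
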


\begin{proof}
	Recall that $W_\tau(\varphi_0)=S_{\tau} \star (\varphi_0 \otimes \varphi_0)=a_\tau \ast W(\varphi_0,\varphi_0)$ by \eqref{eq:cohenclassweyl}. As a special case of \eqref{eq:weyltransformfw} one gets that $\F_\sigma(W(\varphi_0,\varphi_0))=\F_W(\varphi_0\otimes \varphi_0)$, hence $W(\varphi_0,\varphi_0)\in W(\Rdd)$ by Example \ref{exmp:gaussianfw}. The previous proof showed that $W_\tau(\varphi_0)\in C_0(\Rdd)$, so $f$ satisfies assumption $(ii)$ of Theorem \ref{thm:tauberian1}.
\end{proof}

\begin{rem} \label{rem:weylboundedness}
 The operators $S_0$ and $S_1$ are clearly not bounded on $L^2(\Rd)$, even though $\weyl_0,\weyl_1\in L^\infty(\Rdd)$. Hence $\weyl_0$ and $\weyl_1$ are examples of bounded functions with unbounded Weyl transform. Similarly, $S_{1/2}$ is a bounded operator with unbounded Weyl symbol. 
 \end{rem}

We end by considering the example of Born-Jordan quantization. 
\begin{exmp}[Born-Jordan quantization]

The \textit{Born-Jordan distribution} $Q_{BJ}(\psi)$ of $\psi \in L^2(\Rd)$ is given by $$Q_{BJ}(\psi)(z)=\int_{0}^1 W_\tau(\psi)(z) \ d\tau=\int_{0}^1 Q_{S_\tau}(\psi)(z) \ d\tau,$$ see \cite{Boggiatto:2010,deGosson:2016}. It is well-known that $Q_{BJ}$ is of Cohen's class, and from \cite[Prop. 5.8]{Boggiatto:2010} it follows that $Q_{BJ}=Q_{S_{BJ}}$ where $S_{BJ}\in \mathcal{L}(\mathscr{S}(\Rd),\mathscr{S}'(\Rd))$ is defined by 
\begin{equation} \label{eq:fwboundedbj}
\F_W(S_{BJ})(x,\omega)=\mathrm{sinc}(\pi x\cdot \omega).
\end{equation}
The associated quantization scheme $f\mapsto f\star S_{BJ}$ is then the \textit{Born-Jordan quantization} \cite{deGosson:2016}.

For $d=1$ it was shown in \cite[Prop. 2]{Koczor:2018} that $S_{BJ}\in \bo$. Since \eqref{eq:fwboundedbj} shows that $\F_W(S_{BJ})\in L^\infty(\Rdd)$, combining Example \ref{exmp:operatorpseudomeasures} and Proposition \ref{prop:quantization} we may conclude that the Born-Jordan quantization of any $f\in L^1(\R^2)$ is compact, and that the Born-Jordan distribution of any $\psi\in L^2(\R)$ belongs to $C_0(\R^2).$

% To see that $S_{BJ}\in \bo$ for $d=1$, we note that for $\psi \in \mathscr{S}(\Rd)$ we find
% \begin{align*}
% 	|\inner{S_{BJ}\psi}{\psi}_{L^2}|&=|Q_{BJ}(\psi,\psi)(0)| \quad \text{ by Lemma \ref{lem:rankonecase} }\\
% 	&\leq \int_0^1|Q_{S_\tau}(\psi)(0)| \ d\tau \\
% 	&\leq \|\psi\|_{L^2}^2 \int_0^1 \|S_\tau\|_{\bo} \ d\tau \\
% 	&\leq \|\psi\|_{L^2}^2 \int_0^1\frac{1}{\tau^{d/2}(1-\tau)^{d/2}} d\tau \\
% 	&= \begin{cases}
% 		 \pi \quad &\text{ if } d=1\\
% 		 \infty \quad &\text{otherwise}.
% 	\end{cases}
% \end{align*}
% The quadratic form $Q$ is therefore bounded if $d=1$, so in this case there must exist some $S_{BJ}\in \bo$ with $Q(\psi)=\inner{S_{BJ}\psi}{\psi}_{L^2}$. One then checks that $$\inner{S_{BJ}\pi(z)^*\psi}{\pi(z)^* \psi}_{L^2}=Q(\pi(z)^*\psi))=Q_{BJ}(\pi(z)^*\psi)(0)=Q_{BJ}(\psi)(z),$$ which by Lemma \ref{lem:rankonecase} implies that $Q_{BJ}(\psi)=S_{BJ}\star (\psi \otimes \psi)$ and $Q_{BJ}=Q_{S_{BJ}}$.  As $\F_W(R)\in L^\infty(\R^2)$  by definition,  The boundedness of $S_{BJ}$ was previously noted with a different proof in \cite[Prop. 2]{Koczor:2018}. 

 \end{exmp}

\subsection{Counterexample to a Schatten class version of Theorem \ref{thm:tauberian2}}

For the special case $A=0$, Theorem \ref{thm:tauberian2} states that if $R\star a\in \compacts$ for some $a\in W(\Rdd)$, then $R\star g \in \compacts$ for all $g\in L^1(\Rdd).$ An obvious generalization is to replace $\compacts$ by a Schatten class $\SC^p$ for some $1\leq p <\infty$. Is it true that $R\star a \in \SC^p$ for $a \in W(\Rdd)$ implies that $R\star g \in \SC^p$ for all $g\in L^1(\Rdd)$? A simple counterexample is provided by the Weyl calculus.

\begin{exmp}
	Recall that $S_{1/2}\star f$ is the Weyl transform of $f\in L^1(\Rdd).$ If we let $a(z)=2^d e^{-\pi |z|^2}$, then $a\in W(\Rdd)$ and it is well-known that the Weyl transform $a\star S_{1/2}$ of $a$ is the rank-one operator $\varphi_0 \otimes \varphi_0$. In particular, $a\star S_{1/2} \in \tco \subset \SC^p$ for any $1\leq p \leq \infty$. However, if we pick $f\in L^1(\Rdd)\setminus L^2(\Rdd)$, then $f\star S_{1/2}\notin \SC^p$ for any $1\leq p \leq 2$, since the Weyl transform is a unitary mapping from $L^2(\Rdd)$ to $\HS$, and $\SC^p\subset \HS$ for $1\leq p \leq 2.$ Hence we cannot conclude from $a\star S_{1/2}\in \SC^p$ for $a\in W(\Rdd)$ that $f\star S_{1/2}\in \SC^p$ for all $f\in L^1(\Rdd)$, at least for $1\leq p \leq 2$. 
\end{exmp}

  \section*{Acknowledgements} 
  We wish to thank Alexander Bufetov for asking one of the authors about the existence of Tauberian theorems in quantum harmonic analysis, which prompted the writing of this paper. We also thank Hans Feichtinger for pointing out the result from reference \cite{Braun:1983} mentioned in Example \ref{exmp:tauberian1}, and Eirik Berge for helpful discussion on Gabor spaces.

\bibliographystyle{plain}         % Style BST file
   \bibliography{kilder}{} 
\end{document}